\documentclass[11pt]{amsart}
\usepackage{amsthm, amsfonts, amsxtra, amssymb, amscd}

\usepackage{amsfonts,amsmath,amssymb,amscd,amsthm}
\usepackage[english]{babel} 
\usepackage[all]{xy}
 \usepackage[backref, colorlinks, linktocpage, citecolor = blue, linkcolor = blue]{hyperref}

\newcommand{\ii}{{\rm i} }

\newtheorem*{lemma*}{Lemma}
\newtheorem{lemma}[subsection]{Lemma}
\newtheorem*{theorem*}{Theorem}
\newtheorem{theorem}[subsection]{Theorem}
\newtheorem*{proposition*}{Proposition}
\newtheorem{proposition}[subsection]{Proposition}
\newtheorem*{corollary*}{Corollary}

\def\ch{Cayley--Hamilton}
\theoremstyle{definition}
\newtheorem*{definition*}{Definition}
\newtheorem{definition}[subsection]{Definition}
\newtheorem*{example*}{Example}
\newtheorem{example}[subsection]{Example}
\newtheorem{exercise}[subsection]{Exercise}

\theoremstyle{remark}
\newtheorem*{remark*}{Remark}
\newtheorem{remark}[subsection]{Remark}

\usepackage{multirow}
\newcommand{\C}{\mathbb C}
\newcommand{\N}{\mathbb N}
\newcommand{\Q}{\mathbb Q}

\newcommand{\Z}{\mathbb Z}\newcommand{\R}{\mathbb R}
\newcommand{\pd}[2]{\dfrac{\partial#1}{\partial#2}}

\renewcommand{\phi}{\varphi}

\newcommand{\be}{\begin{enumerate}}
\newcommand{\ee}{\end{enumerate}}

\title{Fermionic matrices and super Cayley--Hamilton algebras.}
\author{Claudio Procesi}
\begin{document}\address{Dipartimento di Matematica, G. Castelnuovo,
Universit\`a di Roma La Sapienza, piazzale A. Moro,  00185,
Roma, Italia}
\email{procesi@mat.uniroma1.it}\maketitle
 \begin{abstract}
In this paper we study invariants and trace identities of $m $--tuples of bosonic and fermionic $n\times n$ matrices.
\end{abstract}
\tableofcontents
\section{Introduction}
In a recent preprint: 

{\em 
Fermionic trace relations and supersymmetric
indices at finite N,} 

\noindent the authors 
Giorgos Eleftheriou, Ziming Ji, Sameer Murthy   start  in their words {\em to study invariants of bosonic and fermionic (Grassmann-valued) matrices
under the adjoint action of $U(N)$, weighted by the fermion number} \cite{zm}.\medskip

In this paper they discuss  Formulas for these invariants, which can be viewed as the content of  the {\em first fundamental theorem of matrix invariants} FFT, and  formulate two natural conjectures about trace relations, which can be viewed as the content of  the {\em second fundamental theorem of matrix invariants} SFT. We prove both in this paper. \medskip

Conjecture 1 is our Theorem  \ref{main1} while  Conjecture 2  is our Theorem \ref{trnm}.\smallskip

The proof of these theorems and also of the First Fundamental Theorem  \ref{FFT} follows the lines of my 1976 paper \cite{procesi}.\medskip

In fact the first fundamental theorem of matrix invariants  was already proved by Sibirskii \cite{sibirskii},  the  second fundamental theorem of matrix invariants  was independently proved by Razmyslov \cite{Raz}. \medskip

 Some literature of 3--dimensional invariant theory appears in papers on continuous mechanics, \cite{spencer00}, \cite{spencer01}, \cite{spencer1}, \cite{spencer2}, \cite{spencer3}, \medskip

As an application I define super \ch\ algebras and prove the strong  embedding theorem, Theorem  \ref{str}, for this class of graded algebras, following the lines of my 1987 paper \cite{P5}.\medskip

The  main  novelty in this paper is an appropriate generalization  of the symbolic  method and a very careful bookkeeping of the signs.
\medskip

{\em In this paper all formulas are written over the complex numbers $\C$ but really they should be written as formulas over the rational numbers $\Q$.}\medskip

There are several papers which treat a similar problem, but in the framework of matrix superalgebras.

Starting from work of Razmyslov \cite{Raz1}  and then of Berele \cite{ber0},  \cite{ber}, \cite{ber1}.

I will not discuss relationships between these results and the present paper.

\subsection{ One fermionic matrix}\label{unf} This case has been completely  described in \cite{P6}  and \cite{bps}. 

The problem is to describe the invariant algebra  $$(\bigwedge M_n(\C)^*\otimes M_n(\C))^G=\left( M_n(\bigwedge M_n(\C)^*)\right)^G$$  under conjugation by $G=GL(n,\C)$.\smallskip

The action  on $\bigwedge M_n(\C)^*\otimes M_n(\C)$  is the tensor product action, where the action  on $M_n(\C)$ is conjugation and that on  $\bigwedge M_n(\C)^*$ is the action, as algebra automorphisms, induced by the action on the dual $M_n(\C)^*$ of conjugation.\smallskip

The matrix algebra  $M_n(\C)$ has as basis the matrix units $e_i\otimes e^j$ and its dual $M_n(\C)^*$ the duals $\xi_{i,j}:=e^i\otimes e_j$ so that defining $ \xi:=\sum_{i,j}\xi_{i,j}e_i\otimes e^j$ we have a conjugation invariant {\em generic matrix} \label{genn} with entries $\xi_{i,j}$ in $\bigwedge M_n(\C)^*$, we write $ \xi=(\xi_{i,j})$.\medskip

One has $tr( \xi^{2k})=0,\ \forall k$ and $ \xi^{2n}=0$, see \cite{P6}, which is an interpretation of the classical Theorem of Amitsur--Levitzi on the vanishing of the standard identity in $2n$ variables \cite{AL}.\smallskip

The computation of  $(\bigwedge M_n(\C)^* )^G$ is a classical Theorem of Dynkin, see \cite{din}.   \medskip

 We have that $(\bigwedge M_n(\C)^* )^G=\bigwedge (t_1,\cdots,t_n),\quad t_i:=tr( \xi^{2i-1})$ is the exterior algebra in the $n$  Grassmann variables $t_i$ of degree $2i-1$.\smallskip
 
 Finally  $R:=(\bigwedge M_n(\C)^*\otimes M_n(\C))^G$ has been described in a paper by Bre\v{s}ar; Procesi; \v{S}penko  \cite{bps}.
 
 The invariant algebra $R$ is a free module with basis the $2n$  elements $ \xi^i,\ i=0,\cdots,2n-1$  over the subalgebra $\bigwedge (t_1,\cdots,t_{n-1})$ and, by Lemma 4.3 of \cite{bps} the {\em missing generator} $t_n$  is given by the relation:\begin{equation}\label{tru}
0= n \xi^{2n-1} -\sum_{
i=0} ^{n-1} \xi^{2i} \wedge t_{ n-i },\quad\text{or}\quad   t_n= n \xi^{2n-1}-\sum_{
i=1} ^{n-1} \xi^{2i} \wedge t_{ n-i  } .
\end{equation} In particular the graded dimension  polynomial is
 $$\prod_{i=0}^{n-1}(1+q^{2i-1})(\sum_{i=0}^{2n-1}(1+q^{i})).$$
 For several variables the results are not as precise, since they are not so precise even in the classical case. Nevertheless most of the classical results extend to this more general case.\smallskip
 
 The previous Theorem has then a vast generalization to simple Lie algebras by De Concini, Papi, Procesi \cite{dpp}.
 
\section{Grassmann matrices}\subsection{Superalgebras}
We use the language of superspaces and $\Z/(2)$  graded algebras or superalgebras.  
\begin{definition}\label{super} \strut
\begin{enumerate}\item A {\em superspace}  is a  $\Z/(2)$  graded space $W=W_0\oplus W_1$. 

\item A {\em superalgebra}  is an associative algebra $A$ with a  $\Z/(2)$  grading $A=A_0\oplus A_1$  so that $A_jA_j\subset A_{i+j},\  i+j \ \text{mod}\ 2$.
\item  We denote by $d(a)\in\{0,1\}$ the degree of a  homogeneous element $a$ of $A$.
\item The supercommutator of two homogeneous elements $a,b$ is $$[a,b]_s:=ab-(-1)^{d(a)d(b)}ba.$$ 
\item An algebra is supercommutative if the  supercommutator of two   elements  is always zero.

\item The  supercenter of $A$  is the set of elements of $A$ which super commute with all elements of $A$. It is a supercommutative subalgebra.
\end{enumerate}

\end{definition}

\smallskip

One has the notion of {\em graded tensor product} $A\widehat \otimes B$ of superspaces, which, as vector space, is $A \otimes B$ but inherits an obvious   $\Z/(2)$  grading from the grading of $A,B$.\smallskip

  When $A,B$ are both superalgebras this is also a superalgebra  which, as vector space, is $A \otimes B$ but for multiplication of graded elements one defines:
$$ a_1\otimes b_1\cdot a_2\otimes b_2=(-1)^{d(a_2)d(b_1)}a_1a_2\otimes b_1b_2.$$

If $A=A_0$ or $B=B_0$  the  multiplication in the graded tensor product equals that of the usual   tensor product.\medskip

\begin{proposition}\label{suc}
The graded tensor product $A\widehat \otimes B$ of two supercommutative superalgebras is supercommutative.
\end{proposition}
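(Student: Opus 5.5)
Since the supercommutator is bilinear, it suffices to check that $[x,y]_s=0$ for homogeneous pure tensors $x=a_1\otimes b_1$ and $y=a_2\otimes b_2$, with $a_i\in A$, $b_i\in B$ homogeneous. Every element of $A\widehat\otimes B$ is a sum of such tensors, and bilinearity extends the vanishing to all pairs of elements. The grading on $A\widehat\otimes B$ is given by $d(a\otimes b)=d(a)+d(b)$ mod $2$, so $d(x)d(y)\equiv (d(a_1)+d(b_1))(d(a_2)+d(b_2))$ mod $2$.

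Next I compute both products using the rule for multiplication in the graded tensor product:
$$xy=(-1)^{d(a_2)d(b_1)}a_1a_2\otimes b_1b_2,\qquad yx=(-1)^{d(a_1)d(b_2)}a_2a_1\otimes b_2b_1 .$$
Supercommutativity of $A$ and $B$ gives $a_2a_1=(-1)^{d(a_1)d(a_2)}a_1a_2$ and $b_2b_1=(-1)^{d(b_1)d(b_2)}b_1b_2$. Hence $yx=(-1)^{\epsilon}a_1a_2\otimes b_1b_2$, where
$$\epsilon=d(a_1)d(b_2)+d(a_1)d(a_2)+d(b_1)d(b_2).$$

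It remains to check that $(-1)^{d(x)d(y)}yx=xy$. This amounts to the parity identity
$$d(x)d(y)+\epsilon\equiv d(a_2)d(b_1)\pmod 2 .$$
Expanding $d(x)d(y)$ gives $d(a_1)d(a_2)+d(a_1)d(b_2)+d(b_1)d(a_2)+d(b_1)d(b_2)$. Adding $\epsilon$, each of the terms $d(a_1)d(a_2)$, $d(a_1)d(b_2)$ and $d(b_1)d(b_2)$ appears twice and cancels mod $2$. What remains is exactly $d(b_1)d(a_2)$, as required.

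The only real obstacle is this sign bookkeeping. No earlier result of the paper is needed beyond Definition \ref{super} and the definition of the graded tensor product multiplication.
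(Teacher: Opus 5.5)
Your proof is correct. The paper gives no argument here (its proof is just ``Exercise''), and your direct check on homogeneous pure tensors $a_1\otimes b_1$, $a_2\otimes b_2$ is exactly that intended verification; the reduction is legitimate because every homogeneous element of degree $d$ is a sum of pure tensors of homogeneous factors of total degree $d$, so the sign $(-1)^{d(x)d(y)}$ is constant over all the terms.
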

\begin{proof}
Exercise.
\end{proof}

Given a vector space $V$ we consider the Grassmann algebra $\bigwedge V$ as $\Z/(2)$  graded (with $V$ in degree 1) and then one has an isomorphism as $\Z/(2)$  graded  algebras
\begin{equation}\label{gas}
\bigwedge (V_1\oplus V_2)=\bigwedge V_1\widehat \otimes \bigwedge V_2\,.
\end{equation} 
In grater generality   given a supervector space $V_0\oplus V_1$ one may define the {\em super symmetric algebra of $V$}
\begin{equation}\label{sus}
\mathbb K(V):=S(V_0)\otimes \bigwedge V_1
\end{equation}where for a vector space $W$ the symbol $S(W)$ is the symmetric algebra (in practice the polynomials in the elements of a basis of $W$).\smallskip

As in Formula \eqref{gas} one has for a direct sum $U\oplus V$ of graded vector spaces:
\begin{equation}\label{gas1}
\mathbb K(U\oplus V )=\mathbb K(U)\widehat \otimes\mathbb K( V)\,.
\end{equation} 

One has the simple
\begin{theorem}\label{fc}
$\mathbb K(V)$  is a free algebra in the category  of    supercommutative superalgebras.
\end{theorem}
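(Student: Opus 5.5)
The plan is to verify the universal property directly. Given a supercommutative superalgebra $A=A_0\oplus A_1$ and a degree-preserving linear map $f:V=V_0\oplus V_1\to A$, so $f(V_0)\subset A_0$ and $f(V_1)\subset A_1$, I will show there is a unique superalgebra homomorphism $\bar f:\mathbb K(V)\to A$ extending $f$. Uniqueness is immediate: $\mathbb K(V)=S(V_0)\otimes\bigwedge V_1$ is generated as an algebra by $V_0\oplus V_1$. So everything rests on existence, which I will build in three steps.

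First, $S(V_0)$. Since $A$ is supercommutative, even elements commute with everything; in particular $A_0$ is an honest commutative algebra. The universal property of the symmetric algebra then gives a unique algebra map $f_0:S(V_0)\to A_0\subset A$ extending $f|_{V_0}$, and it preserves the grading because $S(V_0)$ is purely even.

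Second, $\bigwedge V_1$. For $a\in A_1$ supercommutativity gives $a^2=-a^2$, so $a^2=0$ (we work over $\C$, or $\Q$, so $2$ is invertible). Hence $f(v)^2=0$ for all $v\in V_1$. By the universal property of the tensor algebra, $f|_{V_1}$ extends to $T(V_1)\to A$, and this map kills the ideal generated by the $v\otimes v$, so it factors through $\bigwedge V_1$. The result is a map $f_1:\bigwedge V_1\to A$, which is graded since it sends products of $k$ odd elements into $A_{k \bmod 2}$.

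Third, combine the two. Define $\bar f(p\otimes \omega):=f_0(p)f_1(\omega)$. To check multiplicativity, compute $\bar f\big((p_1\otimes\omega_1)(p_2\otimes\omega_2)\big)=f_0(p_1p_2)f_1(\omega_1\omega_2)$; here the tensor product is ordinary, since $S(V_0)$ is even, as the paper notes. I then need $f_1(\omega_1)f_0(p_2)=f_0(p_2)f_1(\omega_1)$, which holds because $f_0(p_2)\in A_0$ is central in $A$ by supercommutativity. Finally, $\mathbb K(V)$ is itself supercommutative: $S(V_0)$ and $\bigwedge V_1$ are supercommutative, and Proposition~\ref{suc} covers their graded tensor product, which here coincides with the usual one. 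So $\mathbb K(V)$ is an object of the category and $\bar f$ is a morphism of it.

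The only delicate point is the second step, namely that odd elements square to zero in $A$. This needs $2$ to be invertible; in characteristic $2$ one would have to build $a^2=0$ into the definition of supercommutativity. Over $\C$ this is harmless, so I expect no real obstacle.
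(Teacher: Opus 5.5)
Your proof is correct and complete. The paper gives no argument here (it is left as an exercise), and your verification is exactly the routine check intended: the universal properties of $S(V_0)$ and $\bigwedge V_1$, using that $A_0$ is central in $A$ and that odd elements of $A$ square to zero because $2$ is invertible. You also cover the remaining points, namely that $\mathbb K(V)$ itself lies in the category (via Proposition~\ref{suc}) and that $\bar f$ preserves the grading.
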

\begin{proof}
Exercise.
\end{proof}

This means that, given a  supercommutative superalgebra $A$ and a graded linear map $\phi:V\to A$, the map $\phi$  extends to a unique  homomorphism  of  $\mathbb K(V)$ to $A$. \medskip

{\em Supercommutative superalgebras are common in algebraic topology as they appear as cohomology algebras $H^*(X,F)$  of a topological space with coefficients in $F$, usually $F=\Z,\Q,\R,\C$ and one has an isomorphism $H^*(X\times Y,\Q)\simeq  H^*(X,\Q)\widehat\otimes H^*(Y,\Q)$.}\medskip

If $A$ is a $\Z/(2)$  graded   algebra we have that the algebra of  $n\times n$ matrices over $A$, $M_n(A)=M_n(\C)\widehat\otimes A$      is also  $\Z/(2)$  graded,  considering $M_n(\C)$  as totally in degree 0. \smallskip

 If  $A$ is supercommutative the supercenter of  $M_n(A)$ is $A=1\otimes A$.
\begin{definition}\label{grma} If  $A$ is supercommutative, an element of degree 0, resp. 1, of $M_n(A)$  is called a {\em Bosonic,} resp.  {\em Fermionic } matrix .\footnote{We use this suggestive language,  borrowed from Physics, although no Physics is in this paper.}\medskip

An element of degree 1, of $M_n(\mathbb K(V))$ is called a {\em Grassmann matrix}.
\end{definition}

\section{Superalgebras with   trace  \label{Supertraces}}  \subsection{Definitions}  In order to explain the SFT (second fundamental theorem) for Grassmann matrices we  develop a language which is a natural evolution of the one developed in the previous paragraphs, at least in characteristic 0, the language of {\em superalgebras with   trace} and their identities (cf. \cite{agpr}).\smallskip

The  SFT is a statement about the trace identities for the  Grassmann  algebra.
\begin{definition}\label{Traces1}  An associative superalgebra with   trace, also called   SAT,  over a field
$F$ is an associative $F$--superalgebra $R$ with a 1-ary operation
\[
t:R\to R
\]
which is assumed to satisfy the
following axioms:
\begin{enumerate}
\item  $t$ is $F$--linear and compatible with the $\Z/(2)$  grading.
\item  $t(a)b=(-1)^{d(a)d(b)}b\cdot t(a), \quad\forall a,b\in R $ homogeneous.
\item  $t(ab)=(-1)^{d(a)d(b)}t(ba), \quad\forall a,b\in R $  homogeneous.
\item $t(t(a)b)=t( a)t(b), \quad\forall a,b\in R.$
\end{enumerate}

\end{definition}
\noindent This operation is called a {\em formal   trace}.\smallskip

We denote  $t(R):=\{t(a),\ a\in R\}$    the image of $t$.

\begin{remark} We have the following implications:

     Axiom 1) implies that $t(R)$ is a graded $F$--submodule.

     Axiom 2) implies that $t(R)$ is contained in  the supercenter of $R$.

     Axiom 3) implies that $t$ is 0 on the space of supercommutators $[R,R]_s$.

     Axiom 4) implies that $t(R)$ is an $F$--subalgebra and that $t$ is 
$t(R)$--linear.
\end{remark}

We call  $t(R)$ the {\em   trace algebra of $R$}.  \medskip

The basic example of algebra with trace is of course the algebra of $m\times m$
matrices over a supercommutative algebra $B$ with the usual  trace.\smallskip

We have made no special requirements on the value of
$t(1)$. Of course in many important examples this is a positive integer.\bigskip

The elements   SAT form the  objects of a category, still denoted by      SAT, where   morphisms are algebra homomorphisms $\phi$ which commute with the   trace and grading:
 $$ d(\phi(a))=d(a),\quad t(\phi(a))=\phi(t(a)).$$  
Recall that in an algebra $R$ with some extra operations, an
ideal
$I$ must be stable under these operations so that $R/I$ can inherit the structure.  

In a   SAT  a superideal is an $\Z/(2)$  graded ideal
$I$    stable under the   trace, that is if $a\in I$ then $t(a)\in I$. The  usual
homomorphism theorems are  then valid for   SAT.

 \smallskip

Given a   SAT $S$, if we {\em forget the   trace} we just have an associative superalgebra, which we may denote by $F(S)$.

On the other hand, there is a simple formal construction which associates to any $F$ superalgebra $R$ a superalgebra with trace $R_t$. \smallskip

Denote by $[R,R]_s $ the linear span of supercommutators, a $\Z/(2)$ graded subspace of $R$.  Take then the quotient space $R/[R,R]_s$,  a $\Z/(2)$ graded quotient space.\smallskip

We then define $R_t:=R\widehat\otimes \mathbb K[R/[R,R]_s]$,  with  $\mathbb K[R/[R,R]_s]$ the  supercommutative superalgebra  of Formula \eqref{sus}. \smallskip

Given an element  $r\in R$, its trace $t(r)$ is the class of $r$ in $R/[R,R]_s$. \smallskip

This map extends, by linearity, to a   trace   on $R_t$ with trace algebra $\mathbb K[R/[R,R]_s]$  (notice that, in this formal construction,  $t(1)$ is just a variable).\smallskip

The    construction  $R\mapsto R_t$ is a functor, adjoint to the previous forgetful functor, that is if  $SA$  denotes the category of superalgebras, and $S\in SAT,\ R\in SA$ we have:
\begin{equation}\label{adj}
\hom_{SAT}(R_t,S)\simeq \hom_{SA}(R ,F(S))\,.
\end{equation}
\begin{proposition}\label{adj1}
A graded homomorphism $f:R\to S$  with $S$ a trace superalgebra extends uniquely to a trace preserving map $f:R_t\to S$.
\end{proposition}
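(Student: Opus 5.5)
The plan is to build the extension from the two universal properties already available: the free supercommutative superalgebra property of $\mathbb K(\cdot)$ (Theorem \ref{fc}), and the universal property of the graded tensor product of superalgebras whose images supercommute. Uniqueness is straightforward. As a superalgebra, $R_t=R\widehat\otimes \mathbb K[R/[R,R]_s]$ is generated by $R=R\otimes 1$ together with the elements $t(r)$, $r\in R$. Indeed, $\mathbb K[R/[R,R]_s]$ is generated by the classes of elements of $R$, and these classes are exactly the $t(r)$. A trace preserving homomorphism agreeing with $f$ on $R$ must therefore send $t(r)\mapsto t_S(f(r))$, so it is determined on a set of generators.

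For existence I would proceed in three steps.

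\textbf{Step 1.} Define a graded linear map $\bar f: R/[R,R]_s\to t_S(S)$ by $\bar f(\bar r)=t_S(f(r))$. This is well defined because $f$ is graded and multiplicative, so it carries supercommutators to supercommutators, and $t_S$ vanishes on $[S,S]_s$ by Axiom 3. Axiom 4 makes $t_S(S)$ a subalgebra, and Axiom 2 makes it supercommutative (it lies in the supercenter). Theorem \ref{fc} then extends $\bar f$ uniquely to a homomorphism $g:\mathbb K[R/[R,R]_s]\to t_S(S)\subset S$.

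\textbf{Step 2.} Define $F(r\otimes k)=f(r)g(k)$ and check that it is a homomorphism on the graded tensor product. The product is
\[
(r_1\otimes k_1)(r_2\otimes k_2)=(-1)^{d(r_2)d(k_1)}r_1r_2\otimes k_1k_2 .
\]
To compare, we need $g(k_1)f(r_2)=(-1)^{d(k_1)d(r_2)}f(r_2)g(k_1)$. This holds because $g(k_1)$ lies in the supercenter of $S$ (Axiom 2) and $g$ preserves degree. $F$ is graded and restricts to $f$ on $R$.

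\textbf{Step 3.} Check that $F$ commutes with the traces. The trace on $R_t$ is the $\mathbb K$-linear extension $t(r\otimes k)=t(r)k$, i.e.\ $\bar r\cdot k\in\mathbb K$. We then compute
\[
F(t(r\otimes k))=g(\bar r k)=t_S(f(r))g(k),
\]
while
\[
t_S(F(r\otimes k))=t_S(f(r)g(k)).
\]
These agree by Axiom 4 (together with Axiom 3 and the supercentrality of $g(k)$ to move it to the correct side), which says that $t_S$ is $t_S(S)$-linear.

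The main obstacle is the sign bookkeeping. First, one must make precise the trace on $R_t$: it is the $\mathbb K$-linear extension, and with $\mathbb K$ written on the right the signs in $t_S(f(r)g(k))=\pm t_S(g(k)f(r))$ must match the convention for the extension. Second, $f$ must preserve degrees so that the signs of Axiom 2 line up in Step 2. Once these conventions are fixed, each verification is a one-line application of the axioms of Definition \ref{Traces1}.
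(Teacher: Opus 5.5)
Your argument is correct and is essentially the paper's route made explicit: the paper's proof only says the proposition is the content of the adjunction \eqref{adj}, which it asserts without proof, and your Steps 1--3 plus the generator argument for uniqueness are exactly the verification of that adjunction. The one detail to fix is that $t_S(S)$ need not contain $1$ (the zero trace satisfies all four axioms), so in Step 1 take the target of $g$ to be the span of $1$ and $t_S(S)$; this is still a supercommutative subalgebra of the supercenter, Axiom 4 still gives $t_S(cs)=c\,t_S(s)$ for $c$ in it, and Steps 2 and 3 then go through unchanged.
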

\begin{proof}
This is the meaning of the previous Formula  \eqref{adj}.
\end{proof}
\subsection{Free algebras}  In algebra the notion of {\em free object} in some category is quite general. 

It applies to groups, monoids, associative or Lie algebras and  so on.  \smallskip

 For a category $\mathcal C$ of algebras of some kind  {\em a  free object over a set $X$}  is an algebra $\mathcal F$ with $X\subset \mathcal F$ and, given any algebra  $G\in\mathcal C$ and a map $f:X\to G$, this map extends to a unique algebra homomorphism  $f:\mathcal F\to G.$

For associative algebras and a set $X$, called {\em variables}, the free algebra denoted $F\langle X \rangle$  has as basis over $F$ all the {\em words}  in the {\em alphabet}\footnote{alphabet in combinatorics is any set out of which we can form {\em words} that is strings of elements of the set. }  $X$ with multiplication by justaposition.  

For $\Z/(2)$  graded algebras the construction is the same but we have 
two alphabets $$X=\{x_1,x_2,\ldots , x_r,\ldots\},\ Y=\{y_1,y_2,\ldots , y_r,\ldots\},$$ where we give degree 0 to the variables $Y$ (bosonic) and 1 to $X$ (fermionic). \smallskip

 Each monomial $M$ in $X,Y$  has a degree in $X$ and in $Y$  and as $\Z/(2)$  degree we take the degree in $X$  
 modulo 2 and denote it by $d(M)\in\{0,1\}$.\medskip

For the  {\em free superalgebra with   trace}  in the {\em variables} $X,Y$ one   applies  the   construction   in Proposition \ref{adj1}. \smallskip

\begin{lemma}\label{imm}
The $\Z/(2)$ graded space $F\langle X,Y\rangle/[ F\langle X,Y\rangle, F\langle X,Y\rangle]_s$ has as basis over $F$ the monomials up to {\em  signed cyclic equivalence}.  

The class of a monomial $M$ is denoted by $t(M)$.
\end{lemma}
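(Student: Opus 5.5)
The plan is to reduce the quotient $F\langle X,Y\rangle/[F\langle X,Y\rangle,F\langle X,Y\rangle]_s$ to the coinvariants of a signed action of a cyclic group on monomials, and then to read off a basis orbit by orbit.

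First, by bilinearity of $[\ ,\ ]_s$, the space $[F\langle X,Y\rangle,F\langle X,Y\rangle]_s$ is spanned by the supercommutators $[M,N]_s$ of monomials. Next I reduce to the case where $N$ is a single letter. Write $N=n_1n_2\cdots n_r$. Then $MN-(-1)^{d(M)d(N)}NM$ is a telescoping sum of terms of the form
$$P z-(-1)^{d(P)d(z)}zP,$$
with $z$ a single variable and $P$ a monomial. To see this, move the letters of $N$ one at a time from the right end to the left end; each move produces the appropriate sign, and the signs multiply to $(-1)^{d(M)d(N)}$ because $d$ is additive mod $2$. So $[F\langle X,Y\rangle,F\langle X,Y\rangle]_s$ is spanned by the elements $w-\sigma(w)$, where for a word $w=w_1\cdots w_k$ we set
$$\sigma(w):=(-1)^{d(w_k)d(w_1\cdots w_{k-1})}\,w_kw_1\cdots w_{k-1}.$$

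The space is graded by length (indeed by multidegree), and $\sigma$ preserves each homogeneous piece. On words of length $k$ the operator $\sigma$ is a signed permutation, and I check that $\sigma^k=1$. Indeed $\sigma^k(w)=\epsilon w$, where, with $D=\sum_i d(w_i)$,
$$\epsilon=\prod_i(-1)^{d(w_i)(D-d(w_i))}=(-1)^{D^2-\sum_i d(w_i)^2}=(-1)^{D^2-D}=1,$$
using $d(w_i)^2=d(w_i)$. Thus $\sigma$ defines a linear action of $C_k=\mathbb Z/(k)$ on the span of the words of length $k$, permuting words up to sign. The quotient we want, in length $k$, is therefore the space of coinvariants $V/(1-\sigma)V$ of this monomial representation.

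The last step is the standard computation of coinvariants of a signed permutation module, done one orbit at a time. Fix an orbit $\mathcal O$ of $C_k$ on words (ignoring signs) and a word $w\in\mathcal O$ with stabilizer $H=\langle\sigma^h\rangle$. Then $\sigma^h(w)=\pm w$. If the sign is $+$, the span of $\mathcal O$ is an induced module from the trivial character of $H$, and its coinvariants are $F\cdot t(w)$; every other word of the orbit is identified with $\pm t(w)$ via $\sigma$. If the sign is $-$, then $t(w)=-t(w)$, so $t(w)=0$ in characteristic $\neq 2$; a typical case is $t(x^2)=0$ for $x$ fermionic. Collecting, a basis is given by one representative for each signed cyclic equivalence class, where the classes forced to equal their own negative are precisely those whose trace vanishes, and these are discarded. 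This is the meaning of monomials up to signed cyclic equivalence, and the class of $M$ is $t(M)$.

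The main obstacle I expect is the sign bookkeeping. Specifically, I must verify that the single-letter rotations really generate all $[M,N]_s$ with the correct sign, and that $\sigma$ is a genuine $C_k$-action, i.e.\ that $\sigma^k=1$ as computed above. I must also handle carefully the orbits with nontrivial stabilizer where the sign character is nontrivial, since these make the description of the basis slightly more delicate than "all cyclic classes".
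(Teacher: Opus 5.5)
Your argument is correct and rests on the same observation as the paper: the defining relations of the quotient are exactly the signed rotations $t(AB)=(-1)^{d(A)d(B)}t(BA)$.

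The paper's proof only records this relation and calls it ``the meaning of signed cyclic equivalence.'' It never checks that the surviving classes are linearly independent. That some classes vanish is addressed only afterwards, in the Warning and the Exercise following the lemma, where $t(M)=0$ exactly when $M$ is an even power of a fermionic word.

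Your route supplies what is missing. You reduce $[M,N]_s$ to single-letter rotations, check $\sigma^k=1$ so that you have an honest $\mathbb Z/(k)$-module, and compute coinvariants orbit by orbit. This gives both spanning and independence. It also identifies the vanishing classes intrinsically, as the orbits on which the stabilizer acts by $-1$, and so makes precise that the lemma's ``basis'' must exclude those classes.

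The one place where your wording is looser than your computation is the telescoping step. The product of signs is $(-1)^{d(M)d(N)}$ not just because $d$ is additive. Writing $D_N=\sum_j d(n_j)$, the residual factor $\prod_j(-1)^{d(n_j)(D_N-d(n_j))}$ equals $(-1)^{D_N^2-D_N}=1$. This is the same parity argument you already use for $\sigma^k=1$.
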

\begin{proof}
Given a monomial $M=AB$  in $X,Y$ one has, by definition \ref{super}  (5) $[A,B]_s:=AB-(-1)^{d(A)d(B)}BA,$ so 
\begin{equation}\label{latra}
t(AB)=(-1)^{d(A)d(B)}t(B A)\,. 
\end{equation} This is the meaning of  signed cyclic equivalence
\end{proof}  
\medskip
 
Having denoted by $t(M)$ the signed cyclic  equivalence class of a monomial  we have that $\mathbb K[F\langle X,Y\rangle/[ F\langle X,Y\rangle, F\langle X,Y\rangle]_s]$ can be thought of as  super symmetric  algebra $\mathcal T=\mathbb K[t(M))]$, that is the algebra of polynomials in the degree 0  elements $t(M)$  tensor the exterior algebra   in the degree 1  elements $t(M)$.\smallskip

We denote by  $\mathcal F_T\langle X,Y\rangle=F\langle X,Y\rangle_t=F\langle X,Y\rangle\widehat\otimes_A \mathcal T$,\label{frea}      
the   trace of a momomial $M$ is  $t(M)$  (including $t(1)$.\smallskip

{\bf Warning}\quad Contrary to what happens in the bosonic case in this case there may be monomials $M$ with  $t(M)$=0. 

This happens when in Formula \eqref{latra}  we have $AB=BA$  and $(-1)^{d(A)d(B)}=-1.$   Notice that this may happen only for monomials of even degree.

\begin{exercise}
If $AB=BA$ there is a word $N$  such that $A,B$ are both powers of $N$.

$t(M)=0$  if and only if $M$ is an even power of a fermionic word.
\end{exercise}
\begin{proposition}\label{Fra}
By Formula  \eqref{adj} and Proposition \ref{adj1}  we have  that the algebra $\mathcal F_T\langle X,Y\rangle=F\langle X,Y\rangle_t=F\langle X,Y\rangle\widehat\otimes_A \mathcal T$ is  the free superalgebra with trace in the variables  $X,Y$.  
\end{proposition}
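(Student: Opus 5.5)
The plan is to combine two universal properties: that of the free superalgebra $F\langle X,Y\rangle$ in the category $SA$, and the adjunction \eqref{adj} of Proposition \ref{adj1}. Let $S$ be a superalgebra with trace, and take a map $f$ from $X\cup Y$ to $S$ with $f(x_i)\in S_1$ and $f(y_j)\in S_0$. The goal is to show that $f$ extends uniquely to a morphism of SAT from $\mathcal F_T\langle X,Y\rangle$ to $S$.

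\textbf{Step 1 (extension to the free superalgebra).} First, $f$ extends uniquely to an algebra homomorphism $\bar f:F\langle X,Y\rangle\to F(S)$, since words form a basis and $\bar f$ is forced on them by multiplicativity. This $\bar f$ is graded because a word $M$ is sent to a product of homogeneous elements whose degree is the $X$-degree of $M$ mod 2, which is $d(M)$. Uniqueness holds because the words generate $F\langle X,Y\rangle$.

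\textbf{Step 2 (extension to the trace algebra).} By Proposition \ref{adj1}, $\bar f$ extends uniquely to a trace preserving map on $F\langle X,Y\rangle_t=F\langle X,Y\rangle\widehat\otimes\mathcal T$. To make this concrete, and to verify the adjunction, which the paper only asserts, I check the following.
\begin{enumerate}
\item The linear map $F\langle X,Y\rangle\to t(S)$, $r\mapsto t(\bar f(r))$, vanishes on $[F\langle X,Y\rangle,F\langle X,Y\rangle]_s$ by axiom 3, since $\bar f$ is graded. It therefore factors through the quotient described in Lemma \ref{imm}.
\item $t(S)$ is supercommutative by axiom 2. By Theorem \ref{fc}, the factored map then extends uniquely to a homomorphism $\tau:\mathcal T\to t(S)$.
\item I define $\Phi(r\otimes c)=\bar f(r)\tau(c)$. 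Multiplicativity for the graded tensor product comes from moving $\tau(c_1)$ past $\bar f(r_2)$; this produces exactly the sign $(-1)^{d(c_1)d(r_2)}$, again by axiom 2.
\item Trace compatibility reads $t(\bar f(r)\tau(c))=\tau(t(r))\tau(c)$. It follows from axiom 4 together with the fact that $\tau(c)$ is a trace and hence supercentral, which lets me move it to the front with the appropriate sign.
\end{enumerate}

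\textbf{Step 3 (uniqueness).} Any SAT morphism agreeing with $f$ on $X\cup Y$ agrees with $\bar f$ on $F\langle X,Y\rangle$, by Step 1. It then agrees on each $t(M)=t(M\otimes 1)$ because it commutes with traces. Since these elements generate $\mathcal F_T\langle X,Y\rangle$ as an algebra, the morphism must equal $\Phi$.

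The main obstacle is the sign bookkeeping in the last two checks of Step 2: multiplicativity of $\Phi$ and the trace identity when $c$ is odd. This requires carefully applying axiom 2 with the correct parities.
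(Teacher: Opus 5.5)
Your proof is correct and follows the paper's route. The paper simply composes the universal property of $F\langle X,Y\rangle$ with the adjunction \eqref{adj} of Proposition \ref{adj1}; your Steps 2--3 make that adjunction explicit, and the sign checks work as you describe.

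One small fix: take the codomain of $\tau$ to be $F\cdot 1+t(S)$, which is supercommutative and lies in the supercenter, rather than $t(S)$ itself. The reason is that $t(S)$ need not contain $1_S$ (for instance when $t=0$), while Theorem \ref{fc} needs a unital target so that $\tau(1)=1$.
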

 
\subsection{Strings and encoding map}\qquad {\em This paragraph establishes the notations needed to discuss the invariant theory.}\medskip

  We now discuss the {\em encoding map}  which describes the subspace  $\mathcal F_T(e,f)$ of the free algebra $\mathcal F_T\langle X,Y\rangle$,   of the elements         which are multilinear in the variables $x_1,\cdots,x_e;y_1,\cdots,y_f$,  in terms of the symmetric  group $S_{e+f+1}$.
  
 Respectively the subspace  $ \mathcal T(e,f)$ of  its trace algebra $\mathcal T$,  which are multilinear in $x_1,\cdots,x_e;y_1,\cdots,y_f$  in terms of the symmetric  group $S_{e+f}$.  \bigskip

Consider a set $A:=\{a_1<a_2<\cdots< a_m\}\subset \N^+$ of positive integers with $|A|=m$.
\begin{definition}\label{colo}\strut
\begin{enumerate}\item A coloring of $A$ is a map  $C:A\to\{0,1\}$.  The elements with $C(a)=0$ are called {\em bosonic} while those with $C(a)=1$ {\em fermionic}.\smallskip

\item A string with support in $A$  is a bijection  from $\{1,2,\cdots,m\}$ to $A$ and it is displayed as a word  $w=a_{\sigma(1)},a_{\sigma(2)},\cdots, a_{\sigma(m)}$.  This word can also be interpreted as a permutation $\sigma$ of $A$.\smallskip

\item For such a word $w$ and coloring $C$ set  $ w^f_C $    to be the substring  formed by the fermionic elements.\smallskip

\item 
We set $d_f^C(w):=|w^f_C|$ its cardinality.  \smallskip

\item 
The  sign $\epsilon_C(w)$ of  $w$ is the sign of the permutation which reorders the fermionic string $w^f_C$.
\end{enumerate}

\end{definition}

A permutation $\sigma$ of $A$   can also be displayed in   its  cycle form. 

This second display is not unique since   $(A,B)=(B,A)$.\smallskip

The canonical way of displaying a permutation $\sigma$ of $A$  is to start from the first element of $A$  write the string of its cycle, close it with a parenthesis, open a parenthesis write the first of the elements of $A$  which have not appeared and iterate the procedure.

In this display the first element of each cycle is minimal in the cycle and the first elements of the  various cycles are increasing from left to right.\smallskip

We  denote the corresponding canonical parenthesized  string or p--string as $\lambda(\sigma)$.\smallskip

We also need a second string. \label{lamb}

 Start from $\lambda(\sigma)$,  single out the cycle which contains the last element $a_m$ of $A$  displayed as $(Ba_m)$,  then remove this cycle  and place the string $B$ at the end. Denote this $\mu(\sigma)$.
 
 \begin{remark}\label{mus}
Observe that, if  $a_m$ is a fixed point of  $\sigma$,  then  $\mu(\sigma)=\lambda(\sigma')$ where $\sigma'$ is the restriction of the permutation $\sigma$ to $A\setminus \{a_m\}$.
\end{remark}
\begin{example}\label{lam} $A=\{1,2,\cdots,8,9\};\  \sigma= 4,1,8,2,3,7,6,9,5$
$$\lambda(\sigma)=(1,4,2)(3,8,9,5)(6,7)\implies \mu(\sigma)=(1,4,2)(6,7)     5,3,8 \,.$$

$$\lambda(\tau)=(1,4,2)(3, 9,5)(6,8,7)\implies \mu(\tau)=(1,4,2)(6,8,7)5,3  \,.$$
\end{example}
\bigskip

\subsubsection{Colors}\quad Let $\mathcal F_T$  be as before the free superalgebra with   trace in infinitely many fermionic variables $x_i$ and  bosonic variables $y_i$.

Given a subset $A\subset \N$ with a coloring $C$ and a string  $w=a_1a_2\cdots a_m$  we evaluate this string in $\mathcal F_T$  by setting 
\begin{equation}\label{tau}
 \tau_C(j)=\begin{cases}
y_j\quad\text{if}\ j\ \text{is bosonic}\\x_j\quad\text{if}\ j\ \text{is fermionic}\\  \tau_C(w)=\epsilon_C (w)  \tau_C(a_1)  \tau_C(a_2)\cdots   \tau_C(a_m)
\end{cases}
\end{equation}
For a p--string $w=(w_1)(w_2)\cdots w(r)$  we set, use Definition \ref{colo}  (5):
\begin{equation}\label{taua}
\tau_C(w)=\epsilon_C (w) tr( \tau_C(w_1)) tr( \tau_C(w_2))\cdots  tr( \tau_C(w_r))\in \mathcal T=tr(\mathcal F_T).
\end{equation}  We also consider the partial p--strings $w=(w_1)(w_2)\cdots w(r)w_{r+1}$ and their colored evaluation 
  \begin{equation}\label{taue}
\tau_C(w)=\epsilon_C (w) tr( \tau_C(w_1)) tr( \tau_C(w_2))\cdots  tr( \tau_C(w_r)) \tau_C(w_{r+1})\in  \mathcal F_T
 \,.
\end{equation}  \medskip
 Observe that, if  $w_{r+1}=1$  we obtain the Formula \eqref{taua}.\smallskip

 To a p--strings $w=(w_1)(w_2)\cdots w(r)(w_{r+1}a_m)$ we associate  the partial p--strings $\bar w=(w_1)(w_2)\cdots w(r)w_{r+1}$.\medskip
 
 Now  let $A=\{1,2,\cdots,e,e+1,\cdots,e+f\}$  and color  $$C_{e,f}(i)=\begin{cases}
0\quad \text{if}\quad i\leq e\\
1\quad \text{if}\quad i> e
\end{cases}$$
It is then convenient to redefine   $ \tau_{C_{e,f}}$  on the fermionic indices $e+j$  by setting  $ \tau_{C_{e,f}}(e+j)  :=x_j$.

In this way p--strings in $A$  correspond to trace monomials in the variables $\{y_1,y_2,\cdots, y_e,x_1,\cdots,x_f\}$.\medskip
             
\begin{remark}\label{alte}
There is an alternative approach to the previous construction which we will use later, For each  $a\in A$  with $C(a)=1$  add a Grassmann variable  $\epsilon_i$ which we assume to supercommute with all the $x_i$.  Then the elements $\epsilon_ax_a$  should be considered as bosonic  and then by setting 
\begin{equation}\label{alte1}
 \tau_C^\epsilon(j)=\begin{cases}
y_j\quad\text{if}\ j\ \text{is bosonic}\\ \epsilon_jx_j\quad\text{if}\ j\ \text{is fermionic}\\  \tau_C^\epsilon(w)=  \tau_C^\epsilon(a_1)  \tau_C^\epsilon(a_2)\cdots   \tau_C^\epsilon(a_m)
\end{cases}
\end{equation}
If $j_1<j_2<\cdot<j_r$ are the Fermionic indices appearing in $w$  we have 
\begin{equation}\label{tau1}
\epsilon_{j_1}\epsilon_{j_2}\cdots \epsilon_{j_r}  \tau_C(w)= \tau_C^\epsilon(w)\end{equation}
For a p--string $w=(w_1)(w_2)\cdots w(r)$  we have:
\begin{equation}\label{taua}
\epsilon_{j_1}\epsilon_{j_2}\cdots \epsilon_{j_r} \tau_C(w)=  tr( \tau_C^\epsilon(w_1)) tr( \tau_C^\epsilon(w_2))\cdots  tr( \tau_C^\epsilon(w_r))$$$$\in \mathcal T[\epsilon_{j_1},\epsilon_{j_2},\cdots ,\epsilon_{j_r}]=tr(\mathcal F_T)[\epsilon_{j_1},\epsilon_{j_2},\cdots ,\epsilon_{j_r}].
\end{equation} 
\end{remark} \smallskip

\begin{definition}\label{mult}
We define $\mathcal T(e,f)\subset \mathcal T$        as the space of elements of   $ \mathcal T$   which  which are multilinear  in the variables $y_1,\cdots,y_e;x_{ 1},\cdots,x_{ f}$.\medskip

We define $\mathcal F_T(e,f)\subset \mathcal F_T$        as the space of elements of   $\mathcal F_T$   which   are multilinear  in the variables $y_1,\cdots,y_e;x_{ 1},\cdots,x_{ f}$.
\end{definition}
 \begin{lemma}\label{labi}
The map  $G\mapsto tr( G y_{e+1})$ is a linear isomorphism between  $\mathcal F_T(e,f)$ and $\mathcal T(e+1,f)$.\smallskip

The map  $G\mapsto tr(G x_{f+1})$ is a linear isomorphism between  $\mathcal F_T(e,f)$ and $\mathcal T(e,f+1)$.
\end{lemma}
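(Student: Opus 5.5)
We work with explicit bases of both sides and show that the map sends a basis bijectively onto a basis, up to signs.

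By Proposition \ref{Fra}, $\mathcal F_T=F\langle X,Y\rangle\widehat\otimes\mathcal T$, and $\mathcal T$ is the super symmetric algebra on the classes $t(M)$ of Lemma \ref{imm}. Hence $\mathcal F_T(e,f)$ has a basis consisting of the nonzero products
$$t(M_1)\cdots t(M_r)\,N,$$
where the $M_i$ and $N$ are monomials that together use each of $y_1,\dots,y_e,x_1,\dots,x_f$ exactly once. Here $N$ is an ordered word, possibly $1$, and the $t(M_i)$ are taken up to signed cyclic equivalence and up to reordering of the commuting and anticommuting factors. Because each variable occurs only once, no $M_i$ is a power of a shorter word, and the vanishing phenomenon of the Warning cannot occur. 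Concretely, this basis is indexed, up to sign, by the partial p--strings $(w_1)\cdots(w_r)w_{r+1}$ on $A=\{1,\dots,e+f\}$ of Formula \eqref{taue}.

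Similarly, $\mathcal T(e+1,f)$ has a basis, up to sign, given by the products $t(M_1)\cdots t(M_s)$ that are multilinear in all $e+f+1$ variables. These are indexed by the p--strings, that is the permutations, on the enlarged set.

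\textbf{Step 1.} Compute the image of a basis element, using axioms (2) and (4) of Definition \ref{Traces1}, which make $tr$ linear over $\mathcal T$ up to sign:
$$tr\big(t(M_1)\cdots t(M_r)N\,y_{e+1}\big)=\pm\, t(M_1)\cdots t(M_r)\,t(Ny_{e+1}).$$
In the fermionic case we get $t(Nx_{f+1})$ instead. At the level of strings this is exactly the passage $w\mapsto \bar w$ inverted: the partial p--string $(w_1)\cdots(w_r)w_{r+1}$ goes to the p--string $(w_1)\cdots(w_r)(w_{r+1}a_m)$, where $a_m$ is the new last index.

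\textbf{Step 2.} Show this correspondence is bijective. Conversely, given a p--string on the enlarged set, exactly one cycle contains the new variable. Using signed cyclic equivalence \eqref{latra}, rotate that cycle so the new variable sits at the end, writing it as $t(Ny_{e+1})$ or $t(Nx_{f+1})$. The word $N$ is then uniquely determined, and so is the remaining product of traces. This recovers the inverse map $\mu$ of the construction around Remark \ref{mus}.

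\textbf{Step 3.} Conclude. The map sends basis elements to $\pm$ basis elements, bijectively, so it is a linear isomorphism.

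The main obstacle is checking that the basis elements really are nonzero and independent, so that no sign relation kills them. The signs must be consistent but need not be computed. This reduces to the Exercise: $t(M)=0$ only when $M$ is an even power of a fermionic word. That is impossible for multilinear monomials of length at least $1$ in distinct variables, and $t(1)$ is a free variable. Hence the signed cyclic classes of multilinear words are nonzero and linearly independent, and the super symmetric algebra $\mathbb K$ contributes no further relations among products of distinct such classes, since distinct factors involve disjoint variables.
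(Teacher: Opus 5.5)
Your proposal is correct and follows essentially the argument the paper intends; the paper itself only says ``Exercise.'' You send the basis of $\mathcal F_T(e,f)$, indexed by partial p--strings, up to sign onto the basis of $\mathcal T(e+1,f)$ or $\mathcal T(e,f+1)$ by closing the open string with the new variable, which is exactly the correspondence $\delta\mapsto\bar\delta$ (inverted by $\mu$) behind Proposition \ref{ilmpsi}, and your observation that multilinear words have nonzero signed cyclic class supplies the needed independence (powers of $t(1)$ simply ride along on both sides).
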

\begin{proof}
Exercise.
\end{proof}
\begin{proposition}\label{ilmpsi} [Encoding map]\strut 

Take a permutation $\sigma\in S_{e+f}$  written  as a p--string, \smallskip

$ \tau_{C_{e,f}}(\sigma)$  depends only on $\sigma\in S_{m}$ and not on its display as  p--string. \smallskip

 We  set $\Phi_\sigma^{e,f}:= \tau_{C_{e,f}}(\sigma)$.\medskip

For a permutation $\delta\in S_{e+f+1}$  written  as a parenthesized string  $$(w_1)(w_2)\cdots w(r)(w_{r+1},m+1)$$ in which   the last  cycle is a string ending with $m+1$, we set  $\bar\delta$   the  partial p--string  $w=(w_1)(w_2)\cdots w(r)w_{r+1}.$     \medskip

 $ \tau_{C_{e,f}}(\bar\delta)$  depends only on $\delta\in S_{e+f+1}$ and not on its display as   p--string.  \smallskip

We  set $\Psi_\delta^{e,f}:= \tau_{C_{e,f}}(\delta)$.\medskip

\begin{enumerate}
\item The map $\sigma\mapsto \Phi ^{e,f}_\sigma=\tau_{C_{e,f}}(\sigma)$ is a linear isomorphism  between the group algebra $\C[S_{e+f}]$ with the subspace     $\mathcal T(e,f)$ of  its trace algebra $\mathcal T$                which are multilinear  in the variables $y_1,\cdots,y_e;x_{ 1},\cdots,x_{ f}$.\medskip


\item The map $\sigma\mapsto \Psi^{e,f}_\sigma= \tau_{C_{e,f}}(\bar\sigma)$ is a linear isomorphism  between the group algebra $\C[S_{e+f+1}]$  with the space $\mathcal F_T(e,f)$ of the free algebra $ \mathcal F_T\langle X,Y\rangle$            which are multilinear in the variables $y_1,\cdots,y_e;x_{ 1},\cdots,x_{ f}$.
\item  $\sigma\in S_{e+f+1}$ then $ \Phi^{e,f}_\sigma=tr(\Psi^{e,f}_\sigma x_{f+1})=tr(\Psi^{e-1,f+1}_\sigma y_{e})$.  \end{enumerate} 
.\medskip

 \end{proposition}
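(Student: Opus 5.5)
The plan has three steps: first show the sign $\epsilon_C$ cancels all display ambiguities, then get (1) from a basis count, and deduce (2) and (3) from (1) via Lemma \ref{labi}.

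\emph{Well-definedness.} The cleanest route is the Grassmann-variable trick of Remark \ref{alte}. There are only two kinds of ambiguity in displaying $\sigma$ as a p--string: permuting the cycles among themselves, and cyclically rotating the string inside a cycle. By Formula \eqref{taua} of Remark \ref{alte},
$$\epsilon_{j_1}\cdots\epsilon_{j_r}\,\tau_C(w)=tr(\tau^\epsilon_C(w_1))\cdots tr(\tau^\epsilon_C(w_r)),$$
and every $\tau^\epsilon_C(a)$ is bosonic. Hence each $tr(\tau_C^\epsilon(w_i))$ is invariant under cyclic rotation of $w_i$, by Formula \eqref{latra} with trivial sign. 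These traces are also even, so they commute with each other. The right hand side therefore depends only on $\sigma$.

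Since $\epsilon_{j_1}\cdots\epsilon_{j_r}$ is a fixed nonzero monomial and the $\epsilon$'s are free Grassmann variables adjoined to $\mathcal T$, multiplication by it is injective on $\mathcal T$. So $\tau_C(w)$ itself depends only on $\sigma$. The same argument handles the partial p--strings $\bar\delta$: the last string $w_{r+1}$ is uniquely determined by $\delta$ (it is the cycle of $m+1$ cut at $m+1$), and only the closed cycles can be rotated or reordered.

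\emph{Isomorphism (1).} I will use Lemma \ref{imm}. A basis of $\mathcal T(e,f)$ is given by the products, in the super symmetric algebra $\mathcal K$, of traces of monomials up to signed cyclic equivalence, each variable occurring once. Such a product is exactly a partition of $\{1,\dots,e+f\}$ into cyclic words, that is, a permutation. The possible vanishing of $t(M)$ (the Warning) only occurs for even powers of a word, which are excluded by multilinearity. Likewise exterior products of repeated odd factors cannot occur, since all factors are distinct. So $\Phi^{e,f}_\sigma=\pm$ a basis element, and $\sigma\mapsto\Phi^{e,f}_\sigma$ is a bijection of bases up to sign.

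\emph{Statements (2) and (3).} For (3), I will write $\delta$ with last cycle $(w_{r+1},m+1)$ and compute
$$tr(\Psi^{e,f}_\delta\,x_{f+1})=\epsilon\,tr(w_1)\cdots tr(w_r)\,tr(w_{r+1}x_{f+1}).$$
Here the trace operators come out because they are $\mathcal T$-linear, with supercentral signs. I then need the overall sign to agree with $\epsilon_C(\delta)$ for the extended coloring. Since $m+1$ is the largest fermionic index and it is placed last, the reordering sign of the fermionic substring is unchanged. The check is that moving $tr(\cdot)$ factors past each other produces no extra sign, and I will verify this again via the $\epsilon$-trick, where everything is even. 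The bosonic variant $y_e$ is identical.

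With (3) in hand, (2) follows from (1) applied to $S_{e+f+1}$ together with Lemma \ref{labi}: the map $G\mapsto tr(Gx_{f+1})$ is a linear isomorphism $\mathcal F_T(e,f)\to\mathcal T(e,f+1)$, and composing its inverse with $\Phi$ gives $\Psi$.

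The main obstacle is the sign bookkeeping in the well-definedness step and in (3). The Grassmann trick is the key device that reduces all of it to the bosonic case.
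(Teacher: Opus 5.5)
Your proposal is correct. It fills in the paper's two-sentence proof by a somewhat different route. The paper only says that well-definedness is ``a consequence of the axioms on the trace'' and that the isomorphisms ``depend on the definition of multilinear''. Its implicit argument is a direct sign check in $\mathcal F_T$:
\begin{itemize}
\item Rotating a cycle $AB\mapsto BA$ changes $\epsilon_C$ by $(-1)^{ab}$, where $a,b$ are the numbers of fermionic letters in $A,B$. This is exactly the sign in $t(AB)=(-1)^{d(A)d(B)}t(BA)$.
\item Swapping two adjacent cycles $w_i,w_{i+1}$ changes $\epsilon_C$ by $(-1)^{d_id_{i+1}}$, which is exactly the supercommutation sign of the two trace factors.
\item (1) and (2) are then both read off from monomial bases. $\mathcal F_T(e,f)$ is spanned by the $t(M_1)\cdots t(M_r)N$ with $N$ a possibly empty monomial, which matches the partial p--strings $\bar\delta$.
\end{itemize}
You instead move to $\mathcal F_T\widehat\otimes\bigwedge[\epsilon]$, where every letter is even, so no signs occur. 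You then obtain (2) from (1) through the first equality of (3) and Lemma \ref{labi}.

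Your route has the merit of using the same device the paper later uses for Theorem \ref{phip}. The direct check, on the other hand, needs no auxiliary algebra. Going through Lemma \ref{labi} saves nothing, since that lemma is proved by the same basis argument.

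Two things you take for granted should be stated.
\begin{itemize}
\item The trace, extended $\bigwedge[\epsilon]$-linearly to the graded tensor product, still satisfies axioms 2 and 3. This is a one-line sign check.
\item The constant in the p--string formula of Remark \ref{alte} is off as printed. With the $\epsilon_i$ anticommuting with the $x_j$, as stipulated there, one actually has $\prod_i tr(\tau^\epsilon_C(w_i))=(-1)^{R(R-1)/2}\epsilon_{j_1}\cdots\epsilon_{j_R}\,\tau_C(w)$, with $R$ the number of fermionic indices. Already $tr(\epsilon_1x_1)tr(\epsilon_2x_2)=-\epsilon_1\epsilon_2\,tr(x_1)tr(x_2)$. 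Since this sign depends only on $R$, your cancellation argument is unaffected.
\end{itemize}

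There are also three smaller corrections.

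First, in (3) nothing has to be moved. Axiom 4 gives $t(t(a)b)=t(a)t(b)$ with no sign, and appending the largest fermionic index $m+1$ leaves $\epsilon_C$ unchanged. So $tr(\Psi^{e,f}_\delta x_{f+1})=\Phi^{e,f+1}_\delta$ is immediate.

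Second, the bosonic variant is not ``identical''. Because bosonic indices come first, $tr(\Psi^{e-1,f+1}_\sigma y_e)$ is the evaluation of $\sigma$ in the coloring where $e$ is fermionic and $m+1$ is bosonic. This equals $\Phi^{e,f+1}_{\rho\sigma\rho^{-1}}$ with $\rho=(e,e+1,\ldots,m+1)$, with no sign since $\rho$ preserves the order of the fermionic indices; it is not $\Phi^{e,f+1}_\sigma$. For $e=2$, $f=0$, $\sigma=(1,3)$ the two sides are $tr(x_1)tr(y_1y_2)$ and $tr(y_1x_1)tr(y_2)$. This does not affect your proof of (2), which uses only the fermionic closing.

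Third, like the paper, you ignore $t(1)$, which is a free even generator of $\mathcal T$. Then $t(1)^k\Phi_\sigma$ is also multilinear in $y_1,\ldots,y_e,x_1,\ldots,x_f$. So identifying the basis with partitions into cyclic words requires excluding the empty word, i.e.\ specializing $t(1)$ or excluding it from $\mathcal T(e,f)$ and $\mathcal F_T(e,f)$.
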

\begin{proof}
The fact that the two maps  depend only on $\sigma$ is a consequence of the axioms on the trace in $\mathcal F_T$. The fact that they are linear isomorphisms depends on the definition of multilinear.
\end{proof}  
\begin{definition}\label{enm}
$ \tau_{C_{e,f}}$ is called the {\em encoding map}.
\end{definition}
Unless there is a risk of confusion  we drop the pedex $^{e,f}$ from the formulas.
 \begin{example}\label{unes}
$m=3+4=7,\ \sigma =  3,1,5,7,2,6,4=(1,3,5,2)(4,7)(6)$
$$\Phi_\sigma= tr(y_1y_3x_2y_2)tr( x_1x_4)tr(x_3) $$
$$\Psi_\sigma= tr(y_1y_3x_2y_2) tr(x_3)x_1 $$
$m=4+3=7,\ \sigma =  3,1,5,7,2,6,4=(1,3,5,2)(4,7)(6)$
$$\Phi_\sigma=- tr(y_1y_3x_1y_2)tr( y_4x_3)tr(x_2) $$
$$\Psi_\sigma= tr(y_1y_3x_1y_2) tr(x_2)y_4 $$
\end{example}
\begin{remark}\label{att}
Beware  of the  fact that for an even cycle  $\mathtt c$  the  formula for $ \phi_{\mathtt c }$ may depend  also  from the way in which $\mathtt c $ is displayed, since $ tr(\eta_{ 1}\circ \eta_{ 2})=\pm tr(\eta_{2}\circ \eta_{1})   $  while $(1,2)=(2,1)$.

For a permutation  it also depends on the order in which we write the cycles.
\end{remark}
One can use  the convention $\lambda(\sigma)$  of page \pageref{lamb}.
\begin{example}\label{phis} $m=0+5=5$, 
$\sigma =(2,4,1)( 5,3) =(1,2,4 )(3,5)$ then
$$\Phi_\sigma= tr(x_2x_4x_1)tr(x_5x_3)=-tr(x_1x_2x_4 )tr(x_3x_5) $$
$$\Psi_\sigma= tr(x_2x_4x_1)tr(x_5x_3)=-tr(x_1x_2x_4 )  x_3  $$
\end{example}  
the sign is that of the permutations $  2,4,1,5,3 ,\ 1,2,4,3,5 $. \medskip

\begin{remark}\label{symm}\strut \begin{enumerate}\item In the free algebra a particular  form of substitution is by permuting the bosonic  or the fermionic variables.\smallskip

So there is an action of the  group $S_e\times S_f$ which can also be identified to a subgroup of $S_m$.

\item The encoding map has the following symmetry property. \smallskip

\item  For $\tau=(\alpha,\beta)\in S_e\times S_f$, the permutation $\alpha$ acts on bosonic indices and   $\beta$   acts   on the  fermionic indices  $1,2,\ldots,f$:
\begin{equation}\label{simme}
\Phi_{\tau\circ\sigma\circ\tau^{-1}}(y_1,\cdots,y_e;x_{ 1},\cdots,x_{ f})=\epsilon_\beta(y_{\alpha(1)},\cdots,y_{\alpha(e)};x_{\beta( 1)},\cdots,x_{\beta(f)})\,.
\end{equation}\end{enumerate}

\end{remark}

\subsection{Trace identites}\qquad

By definition of free superalgebra given any $F$--superalgebra with trace  $R=R_0\oplus R_1$  the  homomorphisms  from $\mathcal F_T= F\langle X,Y\rangle[t(M)]$ to $R$  (trace preserving) are given by {\em evaluations} of polynomials associated to maps $X\to R_1,\ Y\to  R_0$.

Using evaluations allows us to introduce  the notion of {\em trace identity} for a trace superalgebra $R$.

\begin{definition}\label{trid}
A  trace identity  for a trace superalgebra $R$   is a element $\mathtt f\in \mathcal F_T\langle X,Y\rangle=F\langle X,Y\rangle[t(M)]$ of the free algebra such that $\mathtt f$    vanishes under all evaluations in $R$.

\end{definition}  
  
   Let us now remark that 
the set of all trace identities of a trace algebra $R$ is a special ideal of $\mathcal F_T$, called a $T$--ideal.
\begin{definition}\label{tid}
A  $T$--ideal of the free algebra with trace $\mathcal F_T$  is a super ideal of  $\mathcal F_T$  closed under  all substitutions of variables.
 
By definition  a substitution of  a fermionic variable $x$  must be an element of degree 1 in $\mathcal F_T$, while a substitution of  a  bosonic variable $y$  must be an element of degree 0 in $\mathcal F_T$.\end{definition}

A basic question for any algebra $R$ is to describe a set of generators  of its trace identities as $T$--ideal, this is in general a very difficult question and it is a remarkable fact that for matrices one can obtain a simple answer.\medskip

The theory is really effective only when $F$ is a field of characteristic 0, so from now on, in this section,  we assume that this is the case  (the reader can assume $F=\C$ or $\Q$).\medskip

\subsubsection{Varieties}

A subset $S\subset  F\langle X,Y\rangle\widehat\otimes_A \mathcal T$ generates a $T$--ideal $I_S$  and the {\em variety} $\mathcal V_S$ of all trace superalgebras satisfying all the  trace identities $S$.

Of course the superalgebras in  $\mathcal V_S$ satisfy also all the trace identities in the $T$--ideal  $I_S$.

The quotient $ F_S:=F\langle X,Y\rangle\widehat\otimes_A \mathcal T/I_S$ satisfies $I_S$  and any identity of $ F_S$ is also in $I_S$.\smallskip

One easily sees that $F_S$  is a free algebra, in the classes of the elements  $x,y$  for the variety $\mathcal V_S$.

The classes of  $x,y$ are also called {\em generic elements} for   the variety $\mathcal V_S$.\medskip

\begin{remark}\label{taut}\quad

A tautological statement is thus  that, an element $\mathtt f\in F\langle X,Y\rangle\widehat\otimes_A \mathcal T$  is a trace identity for  the variety $\mathcal V_S$ if and only if it vanishes when evaluated in the generic elements.
\end{remark}
Let $I_S\neq \{0\}$  be a $T$--ideal.
Given a trace superalgebra $R\in \mathcal V_S$, consider the trace superalgebra $M_n(R)$.  By a Theorem of Regev the $T$--ideal  $I_{n,R}$ of   trace identities of $M_n(R)$ is also $\neq \{0\}$.   

Let $F_{S,n}$  denote the free superalgebra  for the variety associated to $I_{n,R}$, then  the corresponding free algebra 
$ F_{n,R}\langle X,Y\rangle:=F\langle X,Y\rangle\widehat\otimes_A \mathcal T/I_{n,R}$ can be described by the method of {\em generic matrices}.\footnote{Although I introduced generic matrices in my 1966 Ph thesis I am not aware if this remark has been made before in this generality.}
\medskip

Consider  the free algebra  $F \langle X(n),Y(n)\rangle\otimes_A \mathcal T$ in the  
variables
\begin{equation}\label{matva}
X(n):=\{x_{h,k}^{(i)} \},\quad Y(n):=\{y_{h,k}^{(i)}  \};\quad  h,k=1,\cdots,n,\ i=1,2,\cdots
\end{equation}

Define the {\em generic matrices}
\begin{equation}\label{genma}
X_i:=(x_{h,k}^{(i)}),\quad Y_j:=(y_{h,k}^{(j)}),\ i,j=1,2,\cdots
\end{equation}

We then have an injective map
$$\mathtt i: F\langle X,Y\rangle\otimes_F \mathcal T\mapsto M_n(  F\langle X(n),Y(n)\rangle\otimes_F \mathcal T),\  \mathtt i:\ x_i\mapsto X_i,\ \mathtt i:\ y_j\mapsto Y_j\,.$$

Given any evaluation  $\pi:F\langle X,Y\rangle\widehat\otimes_A \mathcal T \to M_n(R)$, let $$\pi(x_i)=(a_{h,k}^{(i)}), a_{h,k}^{(i)}\in R_1;\ \quad\pi(y_j)=(b_{h,k}^{(j)}),\ b_{h,k}^{(j)}\in R_0\,.$$ 

Consider the induced evaluation  $\bar\pi:F\langle X(n),Y(n)\rangle\otimes_A \mathcal T\to R$  where $\bar\pi:\ x_{h,k}^{(i)}\mapsto a_{h,k}^{(i)},\quad \bar\pi:\ y_{h,k}^{(j)}\mapsto b_{h,k}^{(j)}$. 
We have the factorization
 \begin{equation}\label{fatt}
\xymatrix{  F\langle X,Y\rangle\widehat\otimes_A \mathcal T\ar@{->}[r]^{\mathtt i\qquad}\ar@{->}[rd]_\pi&M_n(  F\langle X(n),Y(n)\rangle\otimes_F \mathcal T)\ar@{->}[d]^{M_n(\bar \pi)}\\
&M_n(R) } .
\end{equation} 
and clearly the evaluation $\pi$ vanishes    if and only if $\bar\pi$ vanishes.  \medskip

Both maps factor through the free algebras  in the varieties of $M_n(R)$ and of $R$ respectively giving the commutative diagram:
\begin{equation}\label{fatt1}
\xymatrix{  F\langle X,Y\rangle\otimes_F \mathcal T\ar@{->}[r]^{\mathtt i\qquad}\ar@{->}[d]_\rho&M_n(  F\langle X(n),Y(n)\rangle\otimes_F \mathcal T)\ar@{->}[d]^{M_n(\bar \rho)}\\
  F\langle X,Y\rangle\otimes_F \mathcal T/I_{M_n(R)}\ar@{->}[r]^{\bar{\mathtt i}\qquad}\ar@{->}[rd]_{\pi'}&M_n(  F\langle X(n),Y(n)\rangle\otimes_F \mathcal T /I_R)\ar@{->}[d]^{M_n(\bar \pi')}\\
&M_n(R) } .
\end{equation}  With $\rho,\ \bar\rho$ the respective quotient maps and $\pi=\pi'\circ\rho,\ \bar\pi=\bar\pi'\circ\bar\rho$.  One can apply this construction to the superalgebra $R= F\langle X(n),Y(n)\rangle\otimes_F \mathcal T /I_R$ and $\pi=\mathtt i\circ\rho,\ \pi'=\mathtt i$.

We obtain that $\bar{\mathtt i}$ is injective so it is an isomorphism with the trace subalgebra  of  $M_n(  F\langle X(n),Y(n)\rangle\otimes_F \mathcal T /I_R)$ generated by the images $\bar{\mathtt i}  $ of the elements $\rho(x_i),\ \rho(y_i)$, which are the images under  $M_n(\bar \rho)$ of the {\em matrix variables } $X_i,\ Y_j$.  \medskip

We have proved
\begin{theorem}\label{gamm} 
The trace subalgebra  of  $M_n(  F\langle X(n),Y(n)\rangle\otimes_F \mathcal T /I_R)$ generated by the  images (generic matrices) under  $M_n(\bar \rho)$ of the {\em matrix variables } $X_i,\ Y_j$ is a free algebra in the variety generated by $M_n(R)$.
\end{theorem}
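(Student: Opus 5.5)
The plan is to identify the kernel of the composite map $M_n(\bar\rho)\circ\mathtt i$ with $I_{M_n(R)}$. Then the generated trace subalgebra is isomorphic to $F\langle X,Y\rangle\widehat\otimes_A\mathcal T/I_{M_n(R)}$, which by the discussion of varieties is the free algebra $F_S$ for the variety of $M_n(R)$, with the generic matrices $M_n(\bar\rho)(X_i),M_n(\bar\rho)(Y_j)$ as free generators. Write $\mathcal R:=F\langle X(n),Y(n)\rangle\otimes_F\mathcal T/I_R$. This is the relatively free algebra of the variety of $R$; by Remark \ref{taut} it lies in that variety, and a trace identity holds in $R$ iff it vanishes on its generic elements. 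First one checks that $\mathcal R$ is supercommutative, so that $M_n(\mathcal R)$ with the usual trace is a SAT. This is a point to verify: $I_R$ contains all supercommutators, since $R$ must be supercommutative for $M_n(R)$ with ordinary trace to make sense. Here the variables $x^{(i)}_{h,k}$ are odd and $y^{(j)}_{h,k}$ even, and $\mathcal T$ is the trace algebra, acting by scalars.

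Then I would prove the two inclusions. Suppose $\mathtt f\in I_{M_n(R)}$. Since $\mathcal R$ lies in the variety of $R$, it satisfies every identity of $R$. I would then argue that every trace identity of $M_n(R)$ is also one of $M_n(\mathcal R)$. Indeed, any evaluation of $\mathtt f$ in $M_n(\mathcal R)$ produces entries that are polynomial expressions in elements of $\mathcal R$. The vanishing of such an entry is itself a trace identity of $\mathcal R$ in the entry variables: one reads this off from the factorization \eqref{fatt}, passing through the entries as $\bar\pi$ does. That identity holds in $R$ because $\mathtt f$ vanishes on all of $M_n(R)$. Hence $\mathtt f$ evaluated at the generic matrices is $0$. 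Conversely, suppose $\mathtt f(X_i,Y_j)=0$ in $M_n(\mathcal R)$, and let $\pi$ be any evaluation in $M_n(R)$. By the factorization \eqref{fatt1}, $\pi$ factors through $M_n(\bar\pi')$ applied to $M_n(\bar\rho)\circ\mathtt i$. This uses that $\bar\pi$ kills $I_R$, because its values lie in $R$ and $I_R$ consists of identities of $R$. Hence $\pi(\mathtt f)=M_n(\bar\pi')(\mathtt f(X_i,Y_j))=0$, so $\mathtt f\in I_{M_n(R)}$.

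Therefore $\ker(M_n(\bar\rho)\circ\mathtt i)=I_{M_n(R)}$, the induced map $\bar{\mathtt i}$ is injective, and its image is exactly the trace subalgebra generated by the generic matrices. The trace compatibility needed here is automatic, since $\mathtt i$ is trace preserving by Proposition \ref{adj1}. The universal property then transfers: given $M\in\mathcal V$ and a map of generators, it factors through $F\langle X,Y\rangle\widehat\otimes_A\mathcal T/I_{M_n(R)}$, and this factorization is unique because the image is generated by the generic matrices.

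The main obstacle is the middle step: checking that an identity of $M_n(R)$ transfers to $M_n(\mathcal R)$, with correct bookkeeping of signs. Evaluating a trace polynomial on matrices whose entries are odd requires writing entries of products and traces as signed sums in a supercommutative ring, and these must be matched with the corresponding expressions over $R$. I would handle this by the device of Remark \ref{alte}. Tensor with auxiliary Grassmann variables so that every odd matrix becomes even. The entry computation is then the classical commutative one, and the signs come out automatically, since extending scalars by a Grassmann algebra does not leave the variety of $R$.
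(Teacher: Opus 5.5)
Your argument is correct and is essentially the paper's: you factor every evaluation $\pi$ into $M_n(R)$ as $M_n(\bar\pi)\circ\mathtt i$ through the entry variables, use that $\bar\pi$ kills $I_R$ to get $\ker(M_n(\bar\rho)\circ\mathtt i)=I_{M_n(R)}$, and conclude that $\bar{\mathtt i}$ is injective onto the trace subalgebra generated by the generic matrices. You in fact spell out the inclusion $I_{M_n(R)}\subset\ker$, which the paper only obtains by ``applying the construction'' to $\mathcal R$. The sign bookkeeping you flag as the main obstacle does not arise: $M_n(\C)$ sits in degree $0$ of $M_n(A)=M_n(\C)\widehat\otimes A$, so matrix products and traces are computed entrywise with no signs, and $M_n(\bar\pi)$ is automatically a trace-preserving superalgebra homomorphism; the Grassmann device of Remark \ref{alte} is therefore unnecessary here (and it would only be faithful on multilinear $\mathtt f$, since e.g.\ $(\epsilon x)^2=0$).
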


\section{Matrices}\quad
\subsection{Generic matrices}

We want to apply Theorem \ref{gamm} to $R=\mathbb K(V)$ which a free supercommutative superalgebra, and show that the free algebra in the variety generated by  $M_n(R)$  is described as an algebra of invariants, Theorem \ref{FFT}.\medskip

Let $G=GL(n,\C)$   be the linear group  acting by conjugation on matrices.
This action factors through the projective linear group $\mathtt P_n=GL(n,\C)/\C^*$.\smallskip 

This action induces an action of $G$   on the vector space  $V=(M_n )^{ \oplus m}$ and on its dual  $V^*=(M_n^*)^{ \oplus m}$  of linear functions on $m$--tuples of matrices.\medskip

Decompose $m=e+f$ and  give  a structure  to $V$, and so also to $V^*$, of graded vector space, by considering the first $h$ summands  in degree 0, the {\em bosonic matrices}  and the  remaining summands of degree 1, the {\em fermionic matrices}.  \smallskip

We then have that the action of $G$ extends as automorphisms to  $\mathbb K(V^*)$.  
 We also take  $\mathbb K(V^*)\otimes M_n(\C)=M_n(\mathbb K(V^*))$, with the diagonal action of $G$ still an    action   as automorphisms.
 \smallskip

We want to describe the invariant subalgebras $\mathbb K(V^*)^G, M_n(\mathbb K(V^*))^G$ of invariants for this action.  

Observe that $M_n(\mathbb K(V^*))^G$ is a SAT  with $\mathbb K(V^*)^G$ as trace algebra.\medskip

The vector space $ V_0$  has as a basis  $ \{\eta_{h,k}^{(i)},\ h,k=1,\cdots,n, \ i=1,2,\cdots \}$, the  $\eta_{h,k}^{(i)}$ are bosonic variables.

 The vector space $ V_1$  has as a basis    $ \{\xi_{h,k}^{(i)},\ h,k=1,\cdots,n, \ i=1,2,\cdots \}$ the   Grassmann  (fermionic) variables.\smallskip

According to Theorem \ref{gamm} they give rise to  $e+f$  generic Grassmann (bosonic and fermionic) matrices \begin{align}\label{grava}
& \eta_1,\cdots, \eta_e;\  \eta_i=(\eta_{h,k}^{(i)})=\sum_{h,h}e_{h,k}\otimes \eta_{h,k}^{(i)}\in M_n(\C)\otimes S(V_0)\\& \xi_1,\cdots, \xi_f;\  \xi_i=(\xi_{h,k}^{(i)})=\sum_{h,k}e_{h,k}\otimes \xi_{h,k}^{(i)}\in M_n(\C)\otimes \bigwedge V_1
\end{align} of size $n$.

\begin{remark}\label{action}
The action of $G$  is defined so that the matrices  $$g\cdot  \eta_i=\sum_{h,h}g\cdot e_{h,k}\otimes g\cdot \eta_{h,k}^{(i)},\ g\cdot  \xi_i=\sum_{h,h}g\cdot e_{h,k}\otimes g\cdot \xi_{h,k}^{(i)}$$  are $G$--invariant. They are   {\em generic matrices} as in Theorem \ref{gamm}   or in page \ref{genn} for 1 variable.
\end{remark}

   We set  $S=S_{e,f,n}=\mathbb K(V^*)=S[\eta_{h,k}^{(i)}]\otimes \bigwedge[\xi_{h,k}^{(i)}] $.\medskip
   
   We   want to study  the $G$  invariants  of  the   algebra $S_{e,f,n}$  and  of  $n\times n$ matrices  $M_n(S_{e,f,n})=M_n(\C)\otimes  S_{e,f,n} $ under the diagonal tensor product action.\medskip
   
   For this consider the evaluation of the free superalgebra with trace:
   
    $\mathcal F_T\langle X,Y \rangle=F\langle X,Y \rangle\otimes\mathcal T$, with $X=x_1,x_2,\ldots , x_f ; \ Y=y_1,y_2,\ldots , y_e$ in  
$M_n(S_{e,f,n})=M_n(\C)\otimes  S_{e,f,n} $  given by $ev:y_i\mapsto \eta_i$ and $ev:x_i\mapsto \xi_i$. 

We want to prove:
 \begin{theorem}\label{FFT} [FFT]\quad The previous evaluation map  surjects to the algebra of $G$--invariants. That is:\medskip
 
 The algebra  $\mathcal T_n$ of invariants  of  both bosonic and fermionic matrices is  generated by the elements $tr(M)$ for all monomials $M$ in  the generic matrices $\xi_i,\eta_j$.\medskip
 
 The algebra  $\mathcal E_n$ of equivariant  maps  of  both bosonic and fermionic matrices is generated by the elements $tr(M)\in \mathcal T_n$ and by the generic matrices $\xi_i,\eta_j$.
 
 \end{theorem}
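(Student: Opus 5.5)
We follow the classical symbolic method of \cite{procesi}: reduce to multilinear invariants, identify them with $G$--invariants in a tensor power, and apply the classical FFT for $GL(n,\C)$, keeping track of the signs coming from the Grassmann variables.

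\emph{Reduction to multilinear invariants.} The algebra $S_{e,f,n}=S[\eta^{(i)}_{h,k}]\otimes\bigwedge[\xi^{(i)}_{h,k}]$ is multigraded by the degree in each matrix variable $\eta_i$, $\xi_j$. The $G$--action preserves this multigrading, so the invariants are spanned by multihomogeneous invariants. A multihomogeneous component of degree $d_i$ in $\eta_i$ and $c_j$ in $\xi_j$ is the space of $G$--invariants in
$$S^{d_1}(M_n^*)\otimes\cdots\otimes S^{d_e}(M_n^*)\otimes\textstyle\bigwedge^{c_1}(M_n^*)\otimes\cdots\otimes\bigwedge^{c_f}(M_n^*).$$
In characteristic $0$ these are, respectively, the symmetric and antisymmetric parts of tensor powers. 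So every such invariant is obtained by \emph{restitution} from a multilinear invariant in $e'=\sum d_i$ bosonic and $f'=\sum c_j$ fermionic variables: one substitutes $\eta_i$ for $d_i$ bosonic variables and $\xi_j$ for $c_j$ fermionic variables. Substitution is a homomorphism of SAT's and sends trace monomials to trace monomials. It therefore suffices to show that the multilinear part of $\mathbb K(V^*)^G$ in $e'$ bosonic and $f'$ fermionic matrices is spanned by the elements $\Phi_\sigma$ of Proposition \ref{ilmpsi}, evaluated at generic matrices.

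\emph{Multilinear case.} Here I use the device of Remark \ref{alte}. The multilinear part of $\mathbb K(V^*)$ in these $m=e'+f'$ variables is, as a $G$--module, $(M_n^*)^{\otimes m}$; only the multiplication signs differ from the bosonic case. Concretely, adjoining auxiliary odd scalars $\epsilon_j$ turns each $\epsilon_j\xi_j$ into an even matrix. After multiplying by $\epsilon_{j_1}\cdots\epsilon_{j_r}$, which is $G$--fixed and injective on multilinear elements, a multilinear invariant becomes a multilinear $G$--invariant function of $m$ commuting (bosonic) matrices, i.e.\ an element of $((M_n^*)^{\otimes m})^G=\mathrm{End}(\C^{n\otimes m})^{G}$. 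By the classical FFT (Schur--Weyl duality), this space is spanned by the invariants $tr(\sigma^{-1}\circ A_1\otimes\cdots\otimes A_m)$, $\sigma\in S_m$, which factor as products of traces of monomials along the cycles of $\sigma$. By Formula \eqref{taua} of Remark \ref{alte}, such a product equals $\epsilon_{j_1}\cdots\epsilon_{j_r}\,\Phi_\sigma(\eta,\xi)$. Cancelling the $\epsilon$'s shows that the multilinear invariants are spanned by the $\Phi_\sigma$, up to sign. This gives the statement for $\mathcal T_n$.

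\emph{Equivariant maps.} The algebra $\mathcal E_n=M_n(S_{e,f,n})^G$ is handled by the standard trick of Lemma \ref{labi}. Take $G\in M_n(S)^G$, multihomogeneous, and introduce a new bosonic generic matrix $\eta_{e+1}$. Then $tr(G\eta_{e+1})$ is an invariant, linear in $\eta_{e+1}$. By the first part, it is a linear combination of trace monomials $\Phi_\sigma$; after restitution, each is linear in $\eta_{e+1}$. By Proposition \ref{ilmpsi}(3), each such term has the form $tr(\Psi_\sigma\,\eta_{e+1})$, where $\Psi_\sigma$ is a product of traces times a monomial, built using the cyclicity of the trace (axiom 3 with signs; $\eta_{e+1}$ is even, so no sign appears when rotating it to the end). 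Nondegeneracy of the trace form on $M_n(S)$ ($tr(A\,e_{h,k})$ recovers the entries of $A$) then gives $G=\sum c_\sigma\Psi_\sigma$, an element generated by traces and the generic matrices.

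\emph{Main obstacle.} The main obstacle is the sign bookkeeping. Two points need care: checking that the identification of multilinear Grassmann invariants with $((M_n^*)^{\otimes m})^G$ via the $\epsilon_j$ is $G$--equivariant, and checking that the signs $\epsilon_C(w)$ in \eqref{taua} exactly match those produced when rearranging the $\xi$ entries into cycle order. I would verify this carefully on a single cycle first, then on products, using the supercommutativity of $\mathbb K(V^*)$ (Proposition \ref{suc}). Since signs never affect spanning, only the correctness of the identification matters for surjectivity.
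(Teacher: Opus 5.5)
Your proposal is correct and follows essentially the same route as the paper: polarization and restitution to reduce to multilinear invariants, Schur--Weyl duality on $(M_n^*)^{\otimes m}$, the auxiliary odd scalars $\epsilon_j$ of Remark \ref{alte} together with Kostant's cycle formula to identify the invariants with the $ev(\Phi_\sigma)$, and the extra generic matrix $\eta_{e+1}$ (Lemma \ref{labi}) to pass to equivariant maps. The only difference is ordering: you apply the $\epsilon$-trick before Schur--Weyl, while the paper first identifies the multilinear part with $M_n^{\otimes m}$ directly (Lemma \ref{iden}) and uses the $\epsilon_j$ only to evaluate $\phi_\sigma$; also, rotating $tr(A\eta_{e+1}B)$ to $tr(BA\eta_{e+1})$ does produce a sign $(-1)^{d(A)d(B)}$ even though $\eta_{e+1}$ is even, which is harmless for spanning.
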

 \begin{proof}

 We start by observing that $ev$ maps  to the invariants.
 \begin{lemma}\label{bain}
The elements $ \eta_i, \xi_j$  are invariant. 

The map    trace $tr: M_n(\C)\otimes  S_{e,f,n}\to   S_{e,f,n},\ tr(A\otimes s)=tr(A)s$  is equivariant.
\end{lemma} 
\subsubsection{Polarization}\qquad  Using the classical method of polarization of Aronhold we can reduce to prove  that the map $ev$ is surjective 
  for multilinear  invariants.

Classically one defines the {\em polarizations}  of a  polynomial  $f(u_1, u_2,\ldots,u_k) $ where the $u_i$ are vector variables and $f$ is homogeneous of degree $h_i$ in $u_i$.

Let us start with   a single variable, say $u_1$ and for simplicity assume $f$ is homogeneous of degree $h$ in $u_1$. 
%
Substitute   $u_1$ with $\sum_{i=1}^h\lambda_iv_i$ where $\lambda_i$ are parameters, that is formal commutative indeterminates  added to the base field $F$ and $v_i$ new vector variables.
Then
$$f(\sum_{i=1}^h\lambda_iy_i, u_2,\ldots,u_n)=\!\!\!\!\!\sum_{a_1+a_2+\ldots+a_h=h}\!\!\!\!\lambda_1^{a_1}\ldots \lambda_h^{a_h }f_{a_1,\ldots,a_h}(v_1,\ldots,v_h, u_2,\ldots,u_n)$$
  \smallskip

In particular  the polynomial with all $a_i=1$ is multilinear and symmetric in the $v_i$ and it is called the {\em full polarization}  of   $f$ with respect to the variable $u_1$.\medskip

Of course one can polarize all the variables obtaining a multilinear polynomial symmetric in the various groups of auxiliary variables.
 
In general polarization works in any free algebra  where  we can make the substitution  for $u_1$ with $\sum_i\lambda_iv_i$.
 
We leave to the reader to verify
\begin{exercise}\label{resf0} Let  $f(u_1, u_2,\ldots,u_k)$ be homogeneous of degree $h_i$ in $u_i$.

 If in the full polarization of $f(u_1, u_2,\ldots,u_n)$  with respect to the variable $u_1$  we set all the new variables $v_i$ equal to $u_1$ we obtain $h! f(u_1, u_2,\ldots,u_n) $.

\end{exercise}
This last operation is called {\em restitution}. \medskip

The main remark is that if $f$ is invariant  under some group $G$ then all  polarizations are also invariants, so if we know the multilinear invariants we have formulas for all invariants by restitution.\smallskip

There is a subtle point  in our case having to do with the encoding map.  Before taking the issue in general let me give an example.
$$Tr(xy^2)\mapsto Tr(x(y_1y_2+y_2y_1) )=\Phi^{2,1}_\sigma+ \Phi^{2,1}_{\sigma^{-1}},\ \sigma= (1,2,3)\,.$$
  $$Tr(yx^2)\mapsto Tr(y(x_1x_2+x_2x_1) )=\Phi^{1,2}_\sigma-\Phi^{1,2}_{\sigma^{-1}},\ \sigma= (1,2,3)\,.$$
What we see is that the two formulas appear symbolically symmetric in the polarized variables, but decoding them  one has an element of the group algebra which in the bosonic case is symmetric under conjugation while in the fermionic case antisymmetric.

This is a general fact and depends upon Formula  \eqref{simme}.

\begin{proposition}\label{siman}
Take a formal trace polynomial homogeneous $\mathtt f$ in the variables $x_i$ of degree $h_i$  and  in the variables $y_j$ of degree $k_j$.

Fully polarize $\mathtt f$ in all the variables obtaining an expression which is symmetric in each of the sets of polarized variables.

Then decode this expression as an element of the group algebra.  Then the various groups of bosonic indices are conjugation invariant while 
 the various groups of fermionic  indices are conjugation  antisymmetric.\end{proposition}
This proposition gives a description of  multihomogeneous elements in terms of multilinear ones.

In particular one may interpret the fact that, if a formal expression has degree $>n^2$  in a single fermionic variable, then it vanishes on $n\times n$ matrices.
\subsubsection{ Multilinear  invariants of matrices.}   To compute multilinear  invariants of matrices  we use a variation of the classical {\em symbolic method}  suggested by the Formulas of Kostant \cite{kostant}, Lemma 4.9.\smallskip
  
 Multilinear  elements can also be  defined formally as follows: 
 
\noindent we have  the diagonal $m\times m$ matrices  $d=(d_1,\cdots,d_m)$ acting on $\xi_{h,k}^{(i)}$  by   $d(A_1,\cdots,A_m)=(d_1A_1,\cdots,d_mA_m)$ this action commutes with that of $G$.\medskip
  
  The multilinear elements of  $ M_n(\C)\otimes  S_{e,f,n},\    S_{e,f,n}$ are the ones for which $d\cdot a=\prod_id_ia$.\smallskip

We have   $$S= \mathbb K(V^*)= \underbrace{S(M_n^*)\ \otimes \cdots \otimes S(M_n^* ) }_{e \; \rm{times}}\otimes \underbrace{\bigwedge M_n^*\widehat \otimes \cdots\widehat\otimes \bigwedge M_n^*  }_{f \; \rm{times}}$$ and the multilinear elements  are   $$ \mathbb K(V^*)_{mult}=\left((M_n^*)^{ \otimes m}\right)=\underbrace{  M_n^* \otimes \cdots \otimes  M_n^*  }_{m \; \rm{times}},$$ we thus  want to understand a symbolic formula  for   $\left((M_n^*)^{ \otimes m}\right)^G\subset  \mathbb K(V^*)$.  \smallskip

Similarly  for  the multilinear elements of  $ M_n(  \mathbb K(V^*)) $  which are identified to $M_n(\C)\otimes \underbrace{  M_n^* \otimes \cdots \otimes  M_n^*  }_{m \; \rm{times}}$.\smallskip

Using Lemma \ref{labi}
 and Proposition  \ref{ilmpsi} we will reduce the description of  equivariant maps to that of invariants.\medskip

We have  the classical Schur--Weyl Theory (see Theorem 6.1.1 of \cite{agpr}):
\begin{theorem}\label{shw}
$$(M_n^*)^{ \otimes m}\simeq M_n^{ \otimes m}=End(\C^{\otimes m}),\implies  \left((M_n^*)^{\otimes m}\right)^G\simeq End_G(\C^{\otimes m})\,.$$   The commuting algebra   $End_G(\C^{\otimes m})$ is linearly generated by the symmetric group  $S_m$ acting on  $\C ^{\otimes m}$  by
\begin{equation}\label{siac}
\sigma (v_1\otimes v_2\otimes \cdots\otimes v_m)=v_{\sigma^{-1} (1)}\otimes v_{\sigma^{-1} (2)}\otimes \cdots\otimes v_{\sigma^{-1} (m)}\,.
\end{equation}
\end{theorem}

\begin{remark}\label{duap}
The isomorphism $(M_n^*)^{ \otimes m}\simeq M_n^{ \otimes m}$ is induced by the non degenerate duality pairing  between $End(\C^{\otimes m})$ and its dual $End(\C^{\otimes m})^*$  given by  $tr(AB)$.
\end{remark}

Since the invariants  $\left(M_n^{ \otimes m}\right)^G$  are encoded  by  elements of the symmetric group  so we have to understand a permutation $\sigma$  as element of  the multilinear elements $\left((M_n^*)^{\otimes m}\right)^G\subset  \mathbb K(V^*)^G$.


\begin{lemma}\label{iden}

Consider   the map 
$\phi:M_n(\C)^{\otimes m}\to   \mathbb K(V^*)  $ defined by
\begin{align}\label{imm}
&\phi:A_1\otimes\cdots \otimes A_m\mapsto tr(A_1\eta_1\otimes\cdots \otimes A_e\eta_e\otimes A_{e+1} \xi_1\otimes\cdots \otimes A_m \xi_f)\\&=tr(A_1\eta_1)\cdots tr(A_e\eta_e)tr(A_{e+1}\xi_1) \cdots tr(A_{m}\xi_f).
\end{align} this map is injective, $G$ equivariant and its image is the space of elements of $ \mathbb K(V^*) $  which are multilinear in the $\eta_i, \xi_j$,  \end{lemma} 
\begin{proof} We are using the duality pairing of Remark \ref{duap}. Given  $A=(a_{i,j})$  and $\eta=(\eta_{i,j})$ bosonic, $\xi=(\xi_{i,j})$  fermionic matrix  we have
$$ tr(e_{h,k}\cdot \eta)= \eta_{k,h} ,\ tr(e_{h,k}\cdot \xi)= \xi_{k,h} \,.$$  

\end{proof}  
\begin{proposition}\label{sima}
The map $\phi$  identifies the invariants  in  $M_n(\C)^{\otimes m}$, which are the linear span of $S_m$,  to the multilinear invariants in $ \mathbb K(V^*)  $.
\end{proposition}

\medskip

For multilinear invariants in $ M_n(\mathbb K(V^*))  $ we use the usual trick, we use Lemma \ref{labi}, we add another generic matrix, which  can be either bosonic of fermionic, let us say it is  $\eta_{e+1}$    then we have a bijective map between multilinear $G$  invariants in $\eta_1,\cdots, \eta_e; \xi_1,\cdots,\xi_f$  of $ M_n(\mathbb K(V^*))  $ and multilinear $G$  invariants in $ \eta_1,\cdots,  \eta_e, \eta_{e+1}; \xi_1,\cdots,\xi_f$  of $ M_n(\mathbb K(V^*))  $ by the formula
\begin{equation}\label{corr}
f( \eta_1,\cdots,  \eta_e;  \xi_1,\cdots, \xi_f)\mapsto tr( f( \eta_1,\cdots,  \eta_e;  \xi_1,\cdots, \xi_f) \eta_{e+1})\,.
\end{equation} We will see how, knowing a Formula for $tr( f( \eta_1,\cdots,  \eta_e;  \xi_1,\cdots, \xi_f) \eta_{e+1})$ we can decode it   as   a Formula for $  f( \eta_1,\cdots,  \eta_e;  \xi_1,\cdots, \xi_f) $.\medskip

\begin{definition}\label{phis}
\begin{equation}\label{sg}
\phi_\sigma^{e,f}:=  tr(  \eta_1\otimes\cdots \otimes  \eta_e\otimes   \xi_1\otimes\cdots \otimes  \xi_f \circ \sigma^{-1}),\ \sigma\in S_m.
\end{equation}
\begin{equation}\label{sg1}
\psi^{e,f}_\sigma :\quad\phi^{e,f}_\sigma=  tr( \psi_\sigma \eta_{e+1}),\ \text{or}\ tr( \psi_\sigma \xi_{f+1}).\ \sigma\in S_{m+1}.
\end{equation}
\end{definition}
\begin{proposition}\label{lphi}
The multilinear  $G$ invariants in   $ \mathbb K(V^*)$, by Lemma \ref{iden},  are linearly generated by the elements $\phi_\sigma$.
\smallskip

The multilinear  $G$ invariants in   $M_n( \mathbb K(V^*))$, by Lemma \ref{iden},  are linearly generated by the elements $\psi_\sigma$.

\end{proposition}
\subsection{Normalization}
If $m>n$     an expression of a multilinear invariant as linear combination of the $\phi_\sigma$   is not unique.\smallskip

In fact the algebra homomorphism $\C[ S_m]\to End_G(\C^{\otimes m})$,  as soon as $m>n$, has a non trivial Kernel $K_{m,n}$.

So, in order to make the description of the multilinear invariants unique  we have  to choose a basis  
   $\C[ S_m]/K_{m,n}$.  \smallskip

This may be done using  Theorem \ref{dgoo} which is Theorem 3.3 of  \cite{note}.
\begin{definition}
Let $0<d$ be an integer and let $\sigma\in S_m$. 

Then $\sigma$ is called
{\em $d$--bad}\index{$d$--bad} if $\sigma$ has a descending subsequence of length $d$,
namely, if  there exists a sequence     $1\le i_1<i_2<\cdots <i_d\le n$ such that
$\sigma(i_1)>\sigma(i_2)>\cdots
>\sigma(i_d)$. Otherwise $\sigma$ is called {\em $d$--good}.

\end{definition}
\begin{remark}
$\sigma$ is $d$--good if any descending sub--sequence of $\sigma$ is of
length $\le d-1$. If $\sigma$ is $d$-good then $\sigma$ is $d'$-good for any
$d'\ge d$.

Every permutation is $1$-bad.
\end{remark}
\begin{theorem}\label{dgoo}The $n+1$--good permutations of $S_m$ form a basis of the quotient space $\C[ S_m]/K_{m,n}$.

\end{theorem}
\begin{proof}
See Theorem 3.3 of  \cite{note}.
\end{proof}  The irreducible representations $M_\lambda$ of the group $S_m$ are indexed by the partitions $\lambda\vdash m$ and the 
group algebra $\C[S_m]=\oplus_{\lambda\vdash m}M_\lambda\otimes M^*_\lambda$ as $S_m\times S_m$ representation.

The kernel of the map  $\C[S_m]\to End(V)^{\otimes m} $    is 0 if  $n\geq  m$ otherwise it is the ideal $K_{m,n}$ generated by the antisymmetrizer $\sum_{\sigma\in S_{n+1}} \epsilon_\sigma\sigma$ (where $ S_{n+1}\subset S_m$).

This ideal is  $$K_{m,n}= \oplus_{\lambda\vdash m\mid  ht(\lambda)>n}M_\lambda\otimes M^*_\lambda,\quad M_\lambda\simeq M^*_\lambda$$ where the height $ht(\lambda)$ of a partition is the number of its non zero rows.\medskip

Therefore the space of multilinear invariants in $\mathcal T$, although it is not an $S_m\times S_m$ representation, is linearly isomorphic (for all choices of $e,f\mid e+f=m$) to $  \oplus_{\lambda\vdash m\mid  ht(\lambda \leq n}M_\lambda\otimes M_\lambda$ and its dimension is 
\begin{equation}\label{laco}
c_m(M_n(F) ):=\sum_{\lambda\vdash m \mid ht(\lambda)\leq n }\chi_\lambda(1)^2
\end{equation} where $\chi_\lambda(1)$ is the dimension of the irreducible representation of $S_m$  associated to the partition $\lambda$ of $m$.

For multilinear matrix valued invariants we replace $m$ with $m+1$, Proposition \ref{ilmpsi} (2).\bigskip

As in the Theory of polynomial identities rather than discussing the dimension of $K_{m,n}$, which  grows as $m!$ it is better to study its codimension  $c_m(M_n(F) )$ in $\C[S_m]$, Formula \eqref{laco}, which equals  the  dimension of its image.

 \medskip

This sum $\sum_{\lambda\vdash m \mid ht(\lambda)\leq n }\chi_\lambda(1)^2$ is hard to compute, for $n=2$ it is given by   $\mathtt C_n:=\frac 1{n+1}\binom {2n}n $   the $n^{th}$ Catalan number.   The first Catalan numbers are  $$1, 1, 2, 5, 14, 42, 132, 429, 1430, 4862, 16796, 58786,\cdots$$

In general we have an asymptotic Formula due to Regev, see  \cite{agpr} Theorem 21.1.2.
\begin{theorem}\label{asmat1} (Regev) \begin{equation}
c_m(M_n(F) )\sim A_n m ^{\frac{1-n^2}2} n^{2(m+1)}. 
\end{equation}
where the constant 
\begin{equation}\label{assma0}
A_n=\Big[\Big(\frac1{\sqrt{2\pi}}\Big)^{n-1} \Big(\frac12 \Big)^{\frac{(n^2-1)}2}\cdot 1!2!\cdots (n-1)!\cdot n^{\frac{ n^2  }2} \Big].
\end{equation}    
\end{theorem}

\subsection{Symbolic method}\quad 

We want to prove
\begin{theorem}\label{phip}
We have, with the notations of Proposition \ref{ilmpsi} and $ev$ the evaluation of Theorem \ref{FFT}
\begin{equation}\label{php1}
\phi_\sigma =ev(\Phi_\sigma),\ \psi_\sigma =ev(\Psi_\sigma)\,.
\end{equation} This is an explicit formula  for the invariants.
\end{theorem}With the notations of Example \ref{unes} \begin{example}
$m=3+4=7,\ \sigma =  3,1,5,7,2,6,4=(1,3,5,2)(4,7)(6)$
$$\phi_\sigma= tr(\eta_1\eta_3\xi_2\eta_2)tr( \xi_1\xi_4)tr(\xi_3) $$
$$\psi_\sigma= tr(\eta_1\eta_3\xi_2\eta_2) tr(\xi_3)\xi_1 $$
$m=4+3=7,\ \sigma =  3,1,5,7,2,6,4=(1,3,5,2)(4,7)(6)$
$$\phi_\sigma=- tr(\eta_1\eta_3\xi_1\eta_2)tr( \eta_4\xi_3)tr(\xi_2) $$
$$\psi_\sigma= tr(\eta_1\eta_3\xi_1\eta_2) tr(\xi_2)\eta_4 $$
\end{example}
\begin{proof}
For bosonic matrices this is a classical result of Kostant, see  \cite{kostant} Lemma 4.9.\medskip

We can reduce to this case by the trick already used in Remark  \ref{alte}.

Add Grassmann variables  $\epsilon_i,\ i=1,\cdots,f$ and then  the matrices $\epsilon_i\xi_i$ have  entris in a commutative ring so Kostant's Formula specializes to
\end{proof}

 Kostant gives this.    Decompose $$\sigma^{-1}=(i_1,i_2,\cdots, i_{k_1})\cdots (i_{k+a },i_{k+a+1},\cdots,i_{k+a+r})$$ into cycles  $\mathtt c_1\mathtt c_2\cdots \mathtt c_r$ then if  $z_i$  are matrices oven a commutative ring we have
\begin{align}\label{Kof}&
  tr(  z_1\otimes\cdots \otimes z_m \circ \sigma^{-1})\\
 & = tr(z_{i_1},z_{i_2} \cdots z_{ i_{k_1}})\cdots tr(z_{i_{k+a }} z_{i_{k+a+1}}\cdots z_{i_{k+a+r} } ) 
\end{align} \begin{proposition}\label{emp}
We have $$ \phi_\sigma= ev(  \tau_{C_{e,f}}(\sigma ))= ev( \Phi_\sigma),\  \psi_\sigma= ev(  \tau_{C_{e,f}}(\bar\sigma ))= ev( \Psi_\sigma).$$ 
\end{proposition}\begin{proof} In Formulas \eqref{Kof}  substitute $z_i$ with $\eta_i$ for $i\leq e$  and $z_{e+i}$  with  $\epsilon_i \xi_i$.

The first line of Formula \eqref{Kof}  becomes $$\epsilon_1\cdots\epsilon_f (-1)^{f-1}tr(  \eta_1\otimes\cdots \otimes \eta_e\otimes \xi_1\otimes \cdots \otimes \xi_f  \circ \sigma^{-1})$$ 
The second line  of Formula \eqref{Kof} becomes  after rearranging the $\epsilon_i$ and introducing the sign 
$$\epsilon_1\cdots\epsilon_f (-1)^{f-1}ev(  \tau_{C_{e,f}}(\sigma ))\,.$$

%
%
%

\end{proof}
We have completed the proof of Theorem \ref{FFT}.\end{proof}

\section{Relations} By  Theorem \ref{gamm} the $T$--ideal of trace relations for the algebra $M_n(\mathbb K(V))$ is the kernel of the evaluation  of the free variables to the generic matrices $\eta_i,\ \xi_j$.

\medskip

 By Proposition \ref{ilmpsi} (1) Lemma \ref{iden} and  the definition of the evaluation in matrices, the multilinear trace identities  of  degrees $a,b$ in the space $\mathcal T(a,b)$ of  the trace algebra $\mathcal T$   are the image under the encoding  map $ \tau_{C_{a,b}}$   of the kernel $K_{m,n}$ of the map  of the group algebra $\C[S_m]$ to the space of endomorphisms of $(\C^n)^{\otimes m}$.

As a vector space  this is thus independent  of $e,f\mid e+f=m$. \medskip

Similar assertion for   the algebra $\mathcal E_n$. In this case the relations of degree $m$ are identified with the kernel $K_{m+1,n}$, Proposition \ref{ilmpsi} (2).

 \subsection{The \ch\ identities}
The theory follows in a way similar  to the classical case, reducing, by polarization and restitution,  to describe multilinear identities. \medskip

 In the classical  case  the space of multilinear relations of minimal degree is in degree $n$ for the algebra $\mathcal E_n$ and it is formed by the multiples of the polarized \ch \ identity defined:
\begin{equation}\label{CCH}
CH_n(y_1,\ldots,y_n)=\sum_{\sigma\in S_{n+1}}\epsilon_\sigma  \Psi_\sigma,\quad tr(CH_n (y_1,\ldots,y_n) y_{n+1})=\sum_{\sigma\in S_{n+1}}\epsilon_\sigma  \Phi_\sigma\,. 
\end{equation}
This follows from the vanishing of  the antisymmetrizer $A_{n+1}=\sum_{\sigma\in S_{n+1}}\epsilon_\sigma  \sigma$ as operator on $(\C^n)^{\otimes n+1}$.\smallskip

Then its interpretation by Formula  \eqref{sg}.  Therefore  the same Formula holds for all $e,f\mid e+f=n+1$, provided we remember   that the formula for $\phi_\sigma$ in the presence  of Grassmann variables  may contain a sign.
\begin{definition}\label{cah}
For any $e+f=n+1 $  we define:
\begin{align}\label{leeqb}
& CH_{e,f} (y_1,\ldots,y_e,x_1,\cdots,x_{f-1}):= \tau_{C_{e,f}}( \sum_{\sigma\in S_{n+1}}\epsilon_\sigma  \bar\sigma)=   \sum_{\sigma\in S_{n+1}}\epsilon_\sigma \Psi_ \sigma 
\\ 
&T_{e,f }(y_1,\ldots,y_e,x_1,\cdots, x_{f}):= \tau_{C_{e,f}}( \sum_{\sigma\in S_{n+1}}\epsilon_\sigma  \sigma) =   \sum_{\sigma\in S_{n+1}}\epsilon_\sigma \Phi_ \sigma  ,
\end{align}  
for $C_{e,f}$ the coloring $C_{e,f}(i)=0,\ i\leq e$ and  $C_{e,f}(i)=1,\ i>e$.   \end{definition}
\begin{remark}\label{conn} By Lemma \ref{labi} we have two possible identities:
\begin{align}\label{leeqb1}
&T_{e,f}(y_1,\ldots,y_e,x_1,\cdots,x_f,x_{f+1})=tr( CH_{e,f} (y_1,\ldots,y_e,x_1,\cdots,x_f)x_{f+1}))\\ 
&T_{e,f}(y_1,\ldots,y_e,x_1,\cdots ,x_{f+1}):=tr( CH_{e-1,f}  (y_1,\ldots,y_{e-1},x_1,\cdots,x_f,x_{f+1})y_{e})) 
\end{align}  In Definition \ref{cah} we have chosen the first.
\end{remark}
 \begin{theorem}\label{main1}
When evaluated in $n\times n$  matrices the two  expressions of Formula \eqref{leeqb} vanish.
\end{theorem}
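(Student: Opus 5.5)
We reduce both vanishing statements to the vanishing of the antisymmetrizer $A_{n+1}=\sum_{\sigma\in S_{n+1}}\epsilon_\sigma\sigma$ as an operator on $(\C^n)^{\otimes n+1}$. The argument runs through the symbolic formula of Theorem \ref{phip} (Proposition \ref{emp}), which says $ev(\Phi_\sigma)=\phi_\sigma$ and $ev(\Psi_\sigma)=\psi_\sigma$.

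We begin with $T_{e,f}$. By Theorem \ref{gamm} it suffices to evaluate on the generic matrices $\eta_1,\dots,\eta_e,\xi_1,\dots,\xi_f$ with $e+f=n+1$. By linearity of $ev$ and Proposition \ref{emp},
\[
ev(T_{e,f})=\sum_{\sigma\in S_{n+1}}\epsilon_\sigma\phi_\sigma=\sum_{\sigma}\epsilon_\sigma\, tr\bigl(\eta_1\otimes\cdots\otimes\eta_e\otimes\xi_1\otimes\cdots\otimes\xi_f\circ\sigma^{-1}\bigr).
\]
Since $\sigma\mapsto\sigma^{-1}$ preserves the sign, this equals $tr(\eta_1\otimes\cdots\otimes\xi_f\circ A_{n+1})$, with $A_{n+1}$ acting on $(\C^n)^{\otimes n+1}$. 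That operator is $0$ because $\bigwedge^{n+1}\C^n=0$, so $ev(T_{e,f})=0$. Concretely, $\phi_\sigma$ is obtained from the multilinear map $\phi$ of Lemma \ref{iden} applied to the element $\sigma\in End(\C^{\otimes m})$. The kernel element $A_{n+1}\in K_{n+1,n}$ therefore maps to $0$, and no sign issue arises, because the signs have already been absorbed into the definition of $\Phi_\sigma$ via $\tau_{C_{e,f}}$.

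We then pass to $CH_{e,f}$. Let $f'=f-1$, so that $CH_{e,f}$ is multilinear in $y_1,\dots,y_e,x_1,\dots,x_{f'}$, with $e+f'=n$. By Remark \ref{conn} and Proposition \ref{ilmpsi}(3), $tr(CH_{e,f}\,x_{f})=T_{e,f}$, and hence $tr(ev(CH_{e,f})\,\xi_f)=ev(T_{e,f})=0$. It remains to conclude that $ev(CH_{e,f})$ is itself $0$. The matrix $M:=ev(CH_{e,f})\in M_n(\mathbb K(V^*))$ does not involve the entries of $\xi_f$, and $tr(M\xi_f)=\sum_{h,k}\pm M_{h,k}\,\xi^{(f)}_{k,h}$. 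The Grassmann variables $\xi^{(f)}_{k,h}$ are free generators independent of the other variables, so $\mathbb K(V^*)=\mathbb K(V'^*)\widehat\otimes\bigwedge[\xi^{(f)}_{k,h}]$. In this decomposition the elements $\xi^{(f)}_{k,h}$ are linearly independent over the first factor, and vanishing of the trace forces every $M_{h,k}=0$. This is exactly the nondegeneracy of the pairing used in Lemma \ref{iden} and Lemma \ref{labi}.

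The main obstacle is the bookkeeping of signs in the identification $ev(\Phi_\sigma)=\phi_\sigma$ uniformly in $\sigma$. Specifically, one must check that the global factor $\epsilon_1\cdots\epsilon_f(-1)^{f-1}$ appearing in the proof of Proposition \ref{emp} is independent of $\sigma$, so that it factors out of the alternating sum. We would verify this first, using the $\epsilon_i$-trick of Remark \ref{alte}. Once the factor is known to be common to all $\sigma$, it can be cancelled in the free module $\mathcal T[\epsilon_1,\dots,\epsilon_f]$, since multiplication by $\epsilon_1\cdots\epsilon_f$ is injective on elements not involving the $\epsilon_i$. The vanishing then follows from Kostant's bosonic identity applied to the matrices $\epsilon_i\xi_i$, which have entries in a commutative ring, where the classical Cayley--Hamilton antisymmetrizer argument applies verbatim.
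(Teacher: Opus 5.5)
Your proposal is correct and follows the paper's proof. $T_{e,f}$ vanishes because the antisymmetrizer $A_{n+1}$ is zero on $(\C^n)^{\otimes n+1}$, via $ev(\Phi_\sigma)=\phi_\sigma$. $CH_{e,f}$ then vanishes because $T_{e,f}=tr(CH_{e,f}\,x_f)$ with $x_f$ a disjoint variable; you justify this step in more detail than the paper, using the linear independence of the free Grassmann entries $\xi^{(f)}_{k,h}$. One small omission: when the paired variable is bosonic (e.g.\ $f=0$, where your $f'=f-1$ is undefined), the same argument works verbatim with the polynomial entries of an extra $\eta$ in place of the $\xi^{(f)}_{k,h}$.
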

\begin{proof}
For the second Formula  this follows from the fact that the operator $\sum_{\sigma\in S_{n+1}}\epsilon_\sigma  \sigma=0$ on $(\C^n)^{\otimes n+1}$. For the first one uses the fact that   
$$T_{e,f}(y_1,\ldots,y_e,x_1.\cdots,x_f,x_{f+1})=tr(CH_{e,f} (y_1,\ldots,y_e,x_1.\cdots,x_f)x_{f+1}) $$ since $x_{f+1}$ is a disjoint  variable this implies the claim.
\end{proof}  
For   $m=0+3=3$ and all  3 matrices fermionic
$$  \Phi_{ 1}=tr(x_1)tr(x_2)tr(x_3),\  \Phi_{ (1,2)}=tr(x_1x_2)tr(x_3), \Phi_{(2,3) }=tr(x_1)tr(x_2x_3),$$$$  \Phi_{ (1,3)}=-tr(x_1x_3)tr(x_2),\  \Phi_{ (1,2,3)}=tr(x_1x_2x_3)),\  \Phi_{(1, 3,2)}=-tr(x_1x_3x_2))$$
Then $\sum_{\sigma\in S_{3}}\epsilon_\sigma   \Phi_\sigma$ is $$  tr(x_1)tr(x_2)tr(x_3)-tr(x_1x_2)tr(x_3)-tr(x_1)tr(x_2x_3)$$$$+tr(x_1x_3)tr(x_2)+tr(x_1x_2x_3)  -tr(x_1x_3x_2) $$Then $\sum_{\sigma\in S_{3}}\epsilon_\sigma  \Psi_\sigma$ is
$$  tr(x_1)tr(x_2) -tr(x_1x_2)  -tr(x_1) x_2 +tr(x_2) x_1 + x_1x_2  - x_2x_1  $$   notice that in this case the expression for CH is  antisymmetric (so vanishes when putting $x_1=x_2$) while for ordinary matrices it is symmetric  and the polarization of the 1--variable \ch\ identity.

If $x_2$ is fermionic and $x_1=y_1$  bosonic then CH equals  the same formula of the ordinary case, 
$$  tr(y_1)tr(x_2) -tr(y_1x_2)  -tr(y_1) x_2 -tr(x_2) y_1 + y_1x_2  + x_2y_1.  $$   
The same Formula  which expresses  the relation in degree $n+1$ for the symmetric group, has $n+1$  different interpretations in the invariants for $n+1=e+f$  so we write these $n+1$ formulas as $CH_{e,f}(y_1,\ldots,y_e,x_1,\cdots,x_f)$.
\begin{remark}\label{simm}
$CH_{e,f}$ is symmetric in the bosonic and antisymmetric  in the  fermionic variables.  If we set equal to a single $y$ all the bosonic variables  we have a non zero element,  which for $f=0$  is  $n!$  times the usual 1--variable \ch\ identity.
\end{remark}
\subsection{The second fundamental Theorem}\quad We start with some combinatorics.\bigskip

Decompose $\underline m:=\{1,2,\cdots,m\}=A\cup B$ into two disjoint sets.  \smallskip

A coloring $C$ of $\underline m$ induces colorings  of $A,B$ so determines fermionic indices $A_f\subset A,\ B_f\subset B$ .

 \begin{lemma}\label{signs} Let $D,E$ two words in $A$ and $B$ respectively. Then
  \begin{equation}\label{lass}
\epsilon_C(D E)=\epsilon_C(AB)\epsilon_C(D  )\epsilon_C(  E) \,.
\end{equation}

\end{lemma}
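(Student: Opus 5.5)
We interpret the statement as follows. $\epsilon_C(W)$ is the sign of the permutation that sorts the fermionic substring $W^f_C$ into increasing order. The symbol $AB$ denotes the word obtained by writing the elements of $A$ in increasing order, followed by the elements of $B$ in increasing order. Thus $\epsilon_C(AB)$ is the sign of the shuffle that merges $A_f$ and $B_f$ into increasing order.

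The plan is to factor the sorting permutation of $(DE)^f_C$ into three steps and use multiplicativity of the sign. By definition of the fermionic substring, $(DE)^f_C=D^f_C\,E^f_C$ is the concatenation of two words, in the alphabets $A_f$ and $B_f$ respectively. To sort it, we proceed as follows.
\begin{enumerate}
\item Sort the first block $D^f_C$ in place, leaving the positions of $E^f_C$ untouched. This is a permutation supported on the first $|A_f|$ positions, with sign $\epsilon_C(D)$.
\item Sort the second block $E^f_C$ in place. This has sign $\epsilon_C(E)$, and afterwards the word reads $A_f$ (increasing) followed by $B_f$ (increasing), i.e.\ it is $(AB)^f_C$.
\item Apply the shuffle sorting $(AB)^f_C$, which has sign $\epsilon_C(AB)$.
\end{enumerate}

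The total sorting permutation of $(DE)^f_C$ is the composite of these three permutations. Since the sign is a homomorphism $S_N\to\{\pm1\}$, and the sorting permutation of a word with distinct letters is uniquely determined by the word, the sign of the composite is the product of the three signs. This gives exactly Formula \eqref{lass}. An equivalent check counts inversions: the inversions of $(DE)^f_C$ split into pairs inside $D^f$, pairs inside $E^f$, and pairs with one letter in each. The last count depends only on the sets $A_f,B_f$, so it equals the inversion count of $(AB)^f_C$.

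The only delicate point, though not a real obstacle, is bookkeeping. One must check that sorting each block in place is a permutation of the block positions only, so that its sign equals the sign of sorting that block as an independent word. One must also keep consistent conventions for composing permutations. Neither convention affects the result, because signs commute.
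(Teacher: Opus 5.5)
Your proof is correct and is essentially the paper's argument: sorting $D$ and $E$ separately contributes $\epsilon_C(D)\epsilon_C(E)$, sorting the resulting word $AB$ contributes $\epsilon_C(AB)$, and multiplicativity of the sign gives Formula~\eqref{lass}. Your inversion-count check is a useful equivalent formulation, and it agrees with the cross-pair count for $\epsilon_C(AB)$ given in Remark~\ref{due}.
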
\begin{proof} When we reorder the word  $DE$  we can first reorder  $D,E$  separately, getting the sign $\epsilon_C(D  )\epsilon_C(  E)$ and then $AB$   getting the sign $        \epsilon_C(AB)$.  

Thus 
  Formula \eqref{lass}.
\end{proof} 
\begin{remark}\label{due}
For each $j\in B_f$ let $|j|$ the number of $i\in A_f$ with $i>j$. 
 
 For each $i\in A_f$ let $|i|$ be the number of $j\in B_f$ with $i>j$. 
    \begin{equation}\label{glii}
\epsilon_C(AB)=\prod_{j\in B_f}(-1)^{|j|}=\prod_{i\in A_f}(-1)^{|i|}\,.
\end{equation} 
\end{remark}

Let us denote   as in  page \pageref{ze}, for all $h$,      $z_h=y_h$  if $ C(h)=0$ and $z_h=x_h$  if $ C(h)=1$.  

  \begin{lemma}\label{ful}
  Let $A,B$ and $C$ be as before.  Consider a permutation $\sigma$ of $A$, an index $i\in A$  and a string  $E$  permutation of $B$. 
  
  Let $\mathtt c:=(i,E)$ a cycle. Then if $C(i)\neq C(i,E)$ change the color  of $i$ and call the new coloring $C'$. Otherwise $C=C'$. We have:
  \begin{equation}\label{mm}
 \tau_C(\sigma\circ \mathtt c)=\gamma \cdot \tau_{C'}(\sigma)|_{z_i\mapsto  \tau_C((i,E))}
\end{equation}
eith $\gamma=\pm 1$ a sign dependent only on $A,B,i,C$ and NOT on $\sigma$.
\end{lemma}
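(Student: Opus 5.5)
The approach is to follow the cycle structure directly and to handle signs with the Grassmann trick of Remark \ref{alte}, which turns every sign into a rearrangement of the auxiliary variables $\epsilon_j$.

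\textbf{Step 1: combinatorics of $\sigma\circ\mathtt c$.} Write $E=e_1e_2\cdots e_s$. Since $\sigma$ fixes $B$ pointwise and $\mathtt c$ moves only $\{i\}\cup B$, the composition $\sigma\circ\mathtt c$ agrees with $\sigma$ on $A\setminus\{i\}$. On the remaining points it acts by
\[
i\mapsto e_1,\quad e_k\mapsto e_{k+1}\ (k<s),\quad e_s\mapsto\sigma(i).
\]
So the cycle $(\cdots,i,\sigma(i),\cdots)$ of $\sigma$ becomes $(\cdots,i,e_1,\dots,e_s,\sigma(i),\cdots)$, and all other cycles are unchanged. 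Hence a p--string for $\sigma\circ\mathtt c$ is obtained from any p--string of $\sigma$ by inserting the word $E$ immediately after the letter $i$. At the level of unsigned monomials this already gives $z_i\mapsto z_iz_{e_1}\cdots z_{e_s}$, which is $\tau_C((i,E))$ up to sign. If $i$ is the last letter of its cycle and the cycle starts with $\sigma(i)$, the inserted block closes that cycle; this causes no trouble.

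\textbf{Step 2: remove the signs by bosonization.} Adjoin Grassmann variables $\epsilon_j$ for the fermionic indices of $C$ on $A\cup B$, and one more variable $\epsilon'_i$ in case $C'(i)=1$. In the $\epsilon$--version \eqref{alte1}, all the letters $\tau^\epsilon_C(j)$ are even. Therefore the trace monomial $\tau^\epsilon$ of a p--string depends only on the permutation, with no sign. Moreover, the substitution $\tau^\epsilon_{C'}(i)\mapsto\tau^\epsilon_C(i)\tau^\epsilon_C(E)$ is an honest homomorphic substitution of one even element by another. By Step 1 this gives the sign-free identity
\[
\tau^\epsilon_C(\sigma\circ\mathtt c)=\tau^\epsilon_{C'}(\sigma)\big|_{\tau^\epsilon_{C'}(i)\mapsto\tau^\epsilon_C(iE)} .
\]

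\textbf{Step 3: compare the $\epsilon$ prefactors.} By \eqref{tau1} and \eqref{taua}, the left side equals $\prod_{j\in(A\cup B)_f}\epsilon_j\cdot\tau_C(\sigma\circ\mathtt c)$, with the $\epsilon_j$ in increasing order. On the right side we first write $\tau^\epsilon_{C'}(\sigma)=\prod_{j\in A_f^{C'}}\epsilon_j\,\tau_{C'}(\sigma)$. We then substitute $\epsilon'_ix_i$ (or $y_i$) by $\tau^\epsilon_C(iE)=\big(\prod_{j\in\{i\}\cup B,\,C(j)=1}\epsilon_j\big)\tau_C((i,E))$.

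The choice of $C'$ makes $\tau_C((i,E))$ have the degree $C'(i)$, so the substitution respects parity. This is exactly why the color of $i$ must be changed. The even or odd block of $\epsilon$'s can be moved to the front, with a sign that depends only on the parity of the block and on how many $\epsilon$'s it passes. We then reorder all $\epsilon$'s into increasing order; Lemma \ref{signs} and Remark \ref{due} compute this sign from the sets $A_f$, $B_f$ and the position of $i$ alone.

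Cancelling the common monomial $\prod\epsilon_j$, which is legitimate because everything is multilinear in distinct free $\epsilon_j$, gives \eqref{mm} with $\gamma$ equal to this product of reordering signs.

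\textbf{Main obstacle.} The delicate point is Step 3: checking that no sign depending on $\sigma$ survives, that is, on where $i$ sits inside its cycle or on the order of the cycles. The plan is to argue that all $\sigma$-dependence has been absorbed into $\tau^\epsilon$, which is sign-free by Step 2. What remains are only the fixed increasing-order normalizations of the $\epsilon$'s on the sets $(A\cup B)_f$, $A_f^{C'}$ and $(\{i\}\cup B)_f$, and these involve only $A$, $B$, $i$ and $C$.
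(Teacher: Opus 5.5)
Your argument is correct. Its combinatorial core is the same as the paper's: $\sigma\circ\mathtt c$ is obtained by inserting $E$ right after $i$. The sign bookkeeping, however, follows a genuinely different route.

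The paper stays with the signed evaluation $\tau_C$. It chooses a display of $\sigma$ in which the cycle through $i$ is written last and ends with $i$, so the support string of $\sigma\circ\mathtt c$ is $F,i,E$ with $E$ appended at the very end. It then reads off $\gamma=\epsilon_C(F,i,E)\,\epsilon_{C'}(F,i)\,\epsilon_C(i,E)$ and splits $\epsilon_C(F,i;E)=\epsilon_C(AB)\epsilon_C(F,i)\epsilon_C(E)$ by Lemma \ref{signs}. Finally it compares $\epsilon_C(F,i)$ with $\epsilon_{C'}(F,i)$ by moving $i$ off the end. This gives an explicit value: $\gamma=\epsilon_C(AB)\epsilon_C(E)\epsilon_C(i,E)$, times $(-1)^{|i|}$ when $C\neq C'$. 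The special display is essential there, since inserting $E$ in the middle of the word would create signs that depend on the position of $i$, i.e.\ on $\sigma$.

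Your bosonization via Remark \ref{alte} makes the insertion identity sign-free for every display, so independence from $\sigma$ becomes structural. In your version $\gamma$ is just a product of normalization signs of $\epsilon$-monomials on the sets $(A\cup B)_f$, $A_f^{C'}$ and $(\{i\}\cup B)_f$. The price is a less explicit $\gamma$, and two points you should make precise.

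First, when $C'(i)=1$ the letter $\epsilon'_ix_i$ is not a free generator, so ``substituting'' it must be realized by an actual homomorphism. Take $x_i\mapsto\tau_C((i,E))$ together with $\epsilon'_i\mapsto\prod_{j\in(\{i\}\cup B)_f}\epsilon_j$. The image of $\epsilon'_i$ is odd, so this is a legitimate superalgebra map, and it commutes with $t$ because the trace is $\epsilon$-linear. When $C'(i)=0$, substitute $y_i$ instead, and pull the even, central $\epsilon$-block out using linearity in $y_i$.

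Second, \eqref{tau1} and its p--string analogue in Remark \ref{alte} hold only up to a global sign. For example, $\epsilon_1x_1\epsilon_2x_2=-\epsilon_1\epsilon_2x_1x_2$, and in general $\tau^\epsilon_C(w)=(-1)^{\binom r2}\epsilon_{j_1}\cdots\epsilon_{j_r}\tau_C(w)$; the same sign $(-1)^{\binom R2}$, with $R$ the total number of fermionic letters, appears for p--strings. This sign depends only on the number of fermionic indices, so your conclusion is unaffected. You should still absorb it into $\gamma$ rather than cite those formulas as exact equalities.
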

 
\begin{proof}

 Write  $\sigma=  \mathtt c_1\circ\cdots\circ\mathtt c_r\circ (\cdots,i)$  a p-string with support a string  $F,i $, so the support of $F$ is $A\setminus\{i\}$. Then we have the cycle  decomposition of $\sigma\circ \mathtt c$:
$$\sigma\circ \mathtt c=  \mathtt c_1\circ\cdots\circ\mathtt c_r\circ (\cdots,i,E)\,. $$
  By Formula  \eqref{tau} $ \tau_C(\sigma\circ \mathtt c)$  is the trace monomial obtained by the substitutions $h\mapsto z_h$  in the p-string of support  $F,i,E$  obtaining a trace monomial $M$ times the sign $\epsilon_ C(F,i,E)$.

\medskip

Now   $ \tau_{C'}(\sigma)$ is the trace monomial obtained by the substitutions $h\mapsto z_h$  in the p-string of support  $F,i $ times the sign $\epsilon_ {C'}(F,i )$.\medskip

By the hypotheses on $C'$  in $ \tau_{C'}(\sigma)$ we can perform the substitution $ \tau_{C'}(\sigma)|_{z_i\mapsto  \tau_C((i,E))}$  (cf. Definition  \ref{tid}) obtaining the same  trace monomial $M$ of $ \tau_C(\sigma\circ \mathtt c)$ times the sign $\epsilon_ {C'}(F,i )\epsilon_ {C}( i ,E)$; hence   we have that
$$\gamma=  \epsilon_ C(F,i,E)\epsilon_ {C'}(F,i )\epsilon_ {C}( i ,E)$$
By Lemma  \ref{signs} and Formula \eqref{lass}    for $D=F,i$.
\begin{equation}\label{efi}
\epsilon_C(F,i ;E)=\epsilon_C(AB)\epsilon_C(F,i  )\epsilon_C(  E)
\end{equation} Hence
$$\gamma= \epsilon_C(AB)\epsilon_C(  E)\epsilon_ {C}( i ,E)\boxed{\epsilon_C(F,i  )\epsilon_ {C'}(F,i )}\,.$$
If $C=C'$ we have $\epsilon_C(F,i  )\epsilon_ {C'}(F,i )=1$ otherwise in one of these two factors the index $i$ is bosonic (say for instance for the color $C$) while in the other it is fermionic.  

 Then $\epsilon_C(F,i  )=\epsilon_C(F  )$  while  $\epsilon_ {C'}(F,i  )=(-1)^{|i|}\epsilon_ {C'}(F  )$ with $|i|$  the number of indices $j$  in $A\setminus\{i\}$  with $j>i$.\smallskip

Therefore, since on the support  $A\setminus\{i\}$   of $F$ the two colors coincide,  we have $\epsilon_C(F  )=\epsilon_ {C'}(F  )$:$$\gamma= \begin{cases}\epsilon_C(AB)\epsilon_C(  E)\epsilon_ {C}( i ,E)\quad\text{if}\quad C=C'\\
(-1)^{|i}\epsilon_C(AB)\epsilon_C(  E)\epsilon_ {C}( i ,E)\quad\text{if}\quad C\neq C'\,.
\end{cases} $$
\end{proof}

In order to proceed we need, for $p\geq m$:\begin{lemma}\label{splici}
Every permutation $\gamma$ in $S_{p}$ can be written as a product
$\gamma=\alpha\circ\beta$ where $\alpha\in S_{m}$ and, in each cycle of $\beta,$ there is at most 1 element in
$1,2,\dots,m$.
\end{lemma}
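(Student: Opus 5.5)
We construct $\alpha$ and $\beta$ explicitly from the cycle decomposition of $\gamma$, one cycle at a time. This reduces the statement to a single cycle, because disjoint cycles commute and have disjoint supports. If $\gamma=\gamma_1\circ\cdots\circ\gamma_s$ and each $\gamma_j=\alpha_j\circ\beta_j$, where $\alpha_j$ permutes only the elements of $\{1,\dots,m\}$ in the support of $\gamma_j$ and $\beta_j$ is supported on the support of $\gamma_j$, then the $\alpha_j$ commute with each other and with the $\beta_k$ for $k\neq j$. So we may set $\alpha=\prod_j\alpha_j\in S_m$ and $\beta=\prod_j\beta_j$. Since the supports of the $\beta_j$ are disjoint, every cycle of $\beta$ is a cycle of some $\beta_j$, and the property required of $\beta$ is inherited from the $\beta_j$.

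Consider a single cycle $\mathtt c$ of $\gamma$. If it contains at most one element of $\{1,\dots,m\}$, we take $\alpha_j=1$ and $\beta_j=\mathtt c$. Otherwise we write it as
$$\mathtt c=(i_1,E_1,i_2,E_2,\cdots,i_k,E_k),$$
where $i_1,\dots,i_k\in\{1,\dots,m\}$ with $k\geq 2$, and each $E_h$ is a (possibly empty) string of elements greater than $m$. We then take $\beta_j=(i_1,E_1)(i_2,E_2)\cdots(i_k,E_k)$, a product of disjoint cycles each containing exactly one element of $\{1,\dots,m\}$, and $\alpha_j=(i_1,i_2,\dots,i_k)$.

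It remains to verify $\mathtt c=\alpha_j\circ\beta_j$, using the convention that the right factor acts first. Take an element $u$ of $E_h$ that is not the last letter of $E_h$. Then $\beta_j$ sends $u$ to the next letter, which $\alpha_j$ fixes, and this agrees with $\mathtt c$. Take the last letter of $E_h$, or $i_h$ itself when $E_h$ is empty. Then $\beta_j$ sends it to $i_h$ and $\alpha_j$ sends $i_h$ to $i_{h+1}$, which is exactly what $\mathtt c$ does. Finally, $i_h$ with $E_h$ nonempty goes under $\beta_j$ to the first letter of $E_h$, which $\alpha_j$ fixes, again agreeing with $\mathtt c$.

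The only delicate point is the composition convention, since the paper composes as functions with $\circ$, as in Lemma \ref{ful}, where $\sigma\circ\mathtt c$ splices $E$ after $i$. With that convention the factorization above is correct. If the opposite convention were in force, one would instead take $\alpha_j=(i_k,\dots,i_1)$, or equivalently place $E_h$ before $i_h$ in each cycle of $\beta_j$.
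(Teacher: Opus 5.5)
Your proof is correct and is the paper's argument: you reduce to a single cycle and factor $(a\,A\,b\,B\cdots)=(a\,b\,\cdots)(A\,a)(B\,b)\cdots$ with the right factor acting first, which is the convention the paper's example $(1,7,5,4,2,6,3)=(1,2)(7,5,4,1)(6,3,2)$ confirms, and you also write out the verification and the reduction to disjoint cycles that the paper leaves implicit. The one slip is in your closing aside, which does not affect the proof: under the opposite convention, taking $\alpha_j=(i_k,\dots,i_1)$ fails, and $(E_h,i_h)$ is the same cycle as $(i_h,E_h)$; the correct variant keeps $\alpha_j$ and replaces $\beta_j$ by $\alpha_j\beta_j\alpha_j^{-1}=\prod_h(E_h,i_{h+1})$, i.e.\ it uses $\alpha\beta=(\alpha\beta\alpha^{-1})\alpha$, since conjugating by an element of $S_m$ preserves the required property of $\beta$.
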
 

\begin{proof} It is enough to do it when  $\gamma$ is a cycle.

Observe  that, for $a,b,c\ldots$ numbers $\leq m$ and $A,B,C,\ldots$ strings of numbers  $> m$,   we have:
 \begin{equation}\label{sppli}
\gamma=(a\ A\ b\ B\ c\ C\  \ \dots\  e\ E)=   (a\ b\ c\dots) (A\ a)( B\ b)(C\ c)\dots (E\ e).
\end{equation}
\end{proof}  
  \begin{example} If $A=7,5,4 ;\ B=6,3;\ a=1,\ b=2$, $$ (1,7,5,4,2,6,3)=(1,2) (7,5,4,1) (6,3,2) .$$\end{example}
  
We are now ready to prove the second fundamental theorem, we use the notations  $\mathcal T_n$ and $\mathcal E_n$ as in Theorem \ref{FFT}  for the algebra  of invariants  of  both bosonic and fermionic and  
 the algebra  of equivariant  maps  of  both bosonic and fermionic matrices.  We call the relations in $\mathcal T_n$  {\em trace relations} and in $\mathcal E_n$  {\em trace identities}.

\begin{theorem}\label{trnm}[SFT] i) The  T-ideal of trace relations, kernel of the evaluation map of $\mathcal T$ into $\mathcal T_n$  is generated (as
a T-ideal) by the trace $n+1$ trace relations
$T_{e,f}$ with $e+f=n+1$.

ii)  The T-ideal of (trace) identities of $n\times n$ matrices,    is generated (as a T-ideal) by the $n+1$   Cayley Hamilton identities $CH_{e,f}(x,y),\ e+f=n+1$,  and $t(1)=n$, Formula \eqref{leeqb}.\end{theorem}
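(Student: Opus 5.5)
The plan follows the classical argument of \cite{procesi}: reduce to multilinear relations, identify them with the ideal $K_{m,n}$, and then show that this ideal is produced from the antisymmetrizer by the operations a $T$--ideal allows.

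\textbf{Step 1: reduction to multilinear relations.} The $T$--ideal of trace relations is stable under polarization and restitution. Polarization is a substitution of variables, possible in both bosonic and fermionic variables, and restitution divides by $h!$, which is harmless in characteristic $0$ by Exercise \ref{resf0}. So it suffices to show that every multilinear relation in $\mathcal T(e,f)$ lies in the $T$--ideal $I$ generated by the $T_{e',f'}$. As recalled at the start of this section, via Proposition \ref{ilmpsi}(1), Lemma \ref{iden} and Proposition \ref{emp}, these multilinear relations are exactly $\tau_{C_{e,f}}(K_{m,n})$, where $m=e+f$. Since $K_{m,n}$ is the two--sided ideal of $\C[S_m]$ generated by $A_{n+1}=\sum_{\sigma\in S_{n+1}}\epsilon_\sigma\sigma$, it is linearly spanned by the elements $\alpha A_{n+1}\beta$ with $\alpha,\beta\in S_m$. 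Moreover $\alpha A_{n+1}\alpha^{-1}$ is the antisymmetrizer on another $(n+1)$--subset.

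\textbf{Step 2: normal form for the spanning set.} Write $\alpha A_{n+1}\beta = (\alpha A_{n+1}\alpha^{-1})(\alpha\beta)$. After relabelling, it then suffices to treat elements $\sum_{\sigma\in S_{n+1}}\epsilon_\sigma\, \sigma\gamma$ with $\gamma\in S_m$. A relabelling by $S_e\times S_f$ only costs a sign by Formula \eqref{simme}. A relabelling that mixes colours is not a substitution, but this is handled by choosing the coloring $C'$ of the subset $\{1,\dots,n+1\}$ induced by $C_{e,f}$, in the spirit of Lemma \ref{ful}. Now apply Lemma \ref{splici} with $p=m$ and the distinguished set $\{1,\dots,n+1\}$: write $\gamma=\alpha'\beta'$ with $\alpha'\in S_{n+1}$ and each cycle of $\beta'$ containing at most one index $\le n+1$. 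Absorbing $\alpha'$ into the antisymmetrizer only changes the sign, by $\epsilon_{\alpha'}$. So we reduce to elements $A_{n+1}\beta'$ with $\beta'=\mathtt c_1\cdots\mathtt c_{n+1}\,\delta$. Here $\mathtt c_i=(i,E_i)$, the $E_i$ are disjoint strings of indices $>n+1$, and $\delta$ is a permutation of the remaining indices $>n+1$.

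\textbf{Step 3: apply Lemma \ref{ful}.} Apply Lemma \ref{ful} iteratively to the cycles $\mathtt c_i$, and handle $\delta$ by multiplicativity of $\tau_C$ on disjoint supports, which follows from Lemma \ref{signs}. This gives
$$\tau_C\Bigl(\sum_{\sigma\in S_{n+1}}\epsilon_\sigma\,\sigma\beta'\Bigr)=\pm\, \tau_{C'}(\delta)\; T_{e',f'}(z_1,\dots,z_{n+1})\big|_{z_i\mapsto \tau_C((i,E_i))}.$$
Here $C'$ recolors each $i$ according to the parity of $(i,E_i)$, and $e'+f'=n+1$ counts the resulting bosonic and fermionic slots. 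The crucial point is that the sign $\gamma$ in Lemma \ref{ful} is independent of $\sigma$, so it factors out of the alternating sum. The right side is therefore a multiple of a substitution instance of some $T_{e',f'}$, hence lies in $I$, proving i).

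\textbf{Step 4: the equivariant statement ii).} Use Lemma \ref{labi} and Proposition \ref{ilmpsi}(2),(3): a multilinear identity $G$ of degree $m$ corresponds to the relation $tr(G\,x_{m+1})$ in $\mathcal T(e,f+1)$, which by i) is a combination of substituted $T$'s times trace monomials. Running Step 3 with the extra index $m+1$ tracked, the cycle containing $m+1$ becomes either a substituted $CH_{e',f'}$ multiplied by monomials, when $m+1$ is in one of the $E_i$, or a trace factor times something in $I$. Expressions like $tr(CH\cdot M)$ lie in the $T$--ideal generated by $CH$, and the normalization $t(1)=n$ is needed for cycles reduced to a fixed point with trivial substitution. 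Conversely all these identities hold by Theorem \ref{main1}.

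\textbf{Main obstacle.} I expect the main difficulty to be the sign bookkeeping in Steps 2–3. One must check that relabelling across colours and the recoloring $C\to C'$ in Lemma \ref{ful} produce a global sign independent of $\sigma\in S_{n+1}$, so that the alternating sum really reassembles into $T_{e',f'}$ rather than into some other signed combination. I would verify this first, using Remark \ref{due} and Formula \eqref{lass}.
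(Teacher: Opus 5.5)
Your proposal is correct and is essentially the paper's proof: Aronhold reduction to multilinear relations, spanning $K_{m,n}$ by the elements $\tau A_{n+1}\tau^{-1}\cdot\tau\beta$, and Lemma \ref{splici} to reduce to cycles meeting the antisymmetrized set in at most one point. It continues with iterated Lemma \ref{ful}, whose sign is independent of $\sigma$, and for ii) with the passage $G\mapsto tr(G\,x_{m+1})$ and opening each term at $x_{m+1}$ (a substituted slot gives $B\cdot CH\cdot A$, an extra trace factor gives $BA$ times a substituted $T=tr(CH\cdot z)$). The differences from the paper, which works with $A=\tau(\{1,\dots,n+1\})$ directly, are cosmetic: you relabel the antisymmetrized set to $\{1,\dots,n+1\}$ with the transported coloring (at the cost of a $\sigma$--independent sign) and treat the cycles disjoint from it explicitly via Lemma \ref{signs}. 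Your one inaccuracy is the explanation of $t(1)=n$: it never arises from fixed points in this argument, and is needed simply because $t(1)$ is a free element of $\mathcal T$ that is tacitly specialized when multilinear elements are identified with $\C[S_m]$.
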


\begin{proof} i) From   the  Aronhold method every relation is a sum of multilihomogeneous relations which can be fully polarized to multilinear trace relations. \smallskip

 The original relation is then obtained up to a multiplicative integer factor  by the process of restitution, that is making the variables  equal in each group.\smallskip

It is  thus sufficient to prove that a multilinear trace relation (resp. identity)  is in the T-ideal generated by the $T_{e,f}$ (resp. the $CH_{e,f}$).

{\em Here by  multilinear  we mean linear in some subset of the variables, without loss of generality we may assume  the first variables.}\smallskip  

 Let us first look at trace
relations. By Proposition \ref{ilmpsi} (4), Lemma \ref{iden} and  the definition of the evaluation in matrices, the multilinear trace identities  of  degrees $a,b$ in the space $\mathcal T(a,b)$ of  the trace algebra $\mathcal T$   are the image under the encoding  map $ \tau_{C_{a,b}}$   of the kernel $K_{m,n}$ of the map  of the group algebra $\C[S_m]$  in the space of endomorphisms of $(\C^n)^{\otimes m}$. 
\bigskip

By the classical theory  this kernel is 0 if $m\leq n$, otherwise it is the two sided ideal of  the group algebra $ \C[S_{m}]$  generated by the antisymmetrizer $A_{n+1}\in \C[S_{n+1}]\subset  \C[S_{m}]$. 

In particular there are no trace relations in degree $\leq n$.\smallskip

 We thus have to decode  this Kernel  as elements of  $\mathcal T(a,b)$ under the encoding map.\medskip

 The space $K_{m,n}$ is linearly spanned by elements  $ \tau(\sum_{\sigma\in S_{n+1}}\epsilon_\sigma \sigma)\beta ,$ with $ \tau,\beta\in S_{m}$.  Hence it is enough to look at the relations of the type
$$R= \tau_{C_{a,b}}(\tau(\sum_{\sigma\in S_{n+1}}\epsilon_\sigma \sigma)\beta),\ \tau,\beta\in S_{m}.$$ \smallskip

 We write such a
relation as
 $ \tau_{C_{a,b}}(\tau(\sum_{\sigma\in S_{n+1}}\epsilon_\sigma \sigma)  \tau^{-1} \tau\beta)$. \smallskip

 We set $A:=\tau(\{1,2,\dots,n+1\})$ and  $B$ its complement in $\underline m$.\smallskip

By Lemma \ref{splici} we can write $ \tau\beta =\alpha\theta$ with  $\alpha\in S_{A}$ and in each cycle of $\theta $ there is at most 1 element in  $A$. We have
$$ \tau(\sum_{\sigma\in S_{n+1}}\epsilon_\sigma \sigma)  \tau^{-1}\alpha =\pm  \tau(\sum_{\sigma\in S_{n+1}}\epsilon_\sigma \sigma)  \tau^{-1}$$ so we are reduced to study a relation
$$ \tau_{C_{e,f} }(\tau(\sum_{\sigma\in S_{n+1}}\epsilon_\sigma \sigma)  \tau^{-1}\theta)  $$in each cycle of $\theta $ there is at most 1 element in  $A$.\medskip

We apply in iteration  Lemma \ref{ful} and see  that $R$  is deduced from  the relation $ \tau_{C }(\tau(\sum_{\sigma\in S_{n+1}}\epsilon_\sigma \sigma)  \tau^{-1}$,  for some color  computed recursively  from the color $C_{a,b }$  restricted to $A$, by a sequence of substitutions of  the variables corresponding to the  indices in $A$ of the cycles of $\theta$.\smallskip

Since $ \tau_{C }(\tau(\sum_{\sigma\in S_{n+1}}\epsilon_\sigma \sigma)  \tau^{-1})$ is obtained from one of the trace relations of  Formula \eqref{leeqb}  by a substitution of variables   i) is proved.
\medskip

ii)\  Consider now a multilinear   relation  $R$  in $\mathcal F_T$ (trace identity) which we may assume to be  in the variables  $y_1,\ldots,y_a,x_1.\cdots,x_b$.\smallskip

 Then 
the formula $tr(R x_{b+1}  )$  is a    multilinear trace   relation  $R$  in $\mathcal T$  in the variables  $y_1,\ldots,y_a,x_1.\cdots,x_b,x_{b+1} $ so, from part i),   there are no trace identities in degree $\leq n-1$.\smallskip

If $a+b=n$ then there $\dim K_{n+1,n}=1$  so there is only the relation $CH_{a,b}$.

If $a+b> n$ we have that $R$   is a linear combination   of terms of types \begin{equation}\label{moo}
T=T_{e,f}(M_1,\ldots,M_e,N_1,\cdots,N_f,N_{f+1})tr(P_1)\cdots tr(P_r)
\end{equation} with  the $M_i$ monomials of degree 0, the $N_i$ monomials of degree 1 and the $P_i$ monomials of arbitrary degree.\smallskip

We can write  each $T=tr(\mathtt f\cdot x_{b+1}  )$  for an element $\mathtt f\in\mathcal F_T$  vanishing on $n\times n$ matrices and $R$ is then a linear combination of these elements $\mathtt f$.\smallskip

The Formula for $\mathtt f$  depends upon where $x_{b+1}$ appears in the Formula \eqref{moo}.

If it appears in one of the $P_i=Ax_{b+1}B$  then  up to a sign $\mathtt f=BA$ times the  element $T$ with $P_i$  omitted.

Otherwise up to some reordering, cf. \ref{simm}, and sign we may assume that we have $N_{f+1}=Ax_{b+1}B$ with $A,B$  monomials or  $M_{e}=Ax_{b+1}B$ with $A,B$  monomials.  Then we see that $\mathtt f$ is up to sign 
$$\mathtt f=B \cdot C_{e,f}(M_1,\ldots,M_e,N_1,\cdots,N_f )A\cdot tr(P_1)\cdots tr(P_r) $$ or 
$$\mathtt f=B\cdot  C_{e-1,f+1}(M_1,\ldots,M_{e-1},N_1,\cdots,N_f,N_{f+1} )A\cdot tr(P_1)\cdots tr(P_r)\,. $$ 
The proof is complete.\end{proof}  In \S \ref{unf}  we have seen that for $n=1$  we have two relations or identities,  the  Amitsur--Levitzi equation  $\xi^{2n}=0$ and equation \eqref{tru}.

According to the Theorem just proved they should be deducible from the list $CH_{e,f},\ e+f=n,\  $ Formulas \eqref{leeqb}. We suggest the reader to do this as Exercise.
\begin{exercise}
$\xi^{2n}=0$ is deduced from $CH_{n,0}$  by replacing all the bosonic variables with $\xi^{2 } $.\smallskip

Equation \eqref{tru}  is deduced from $CH_{n-1,1}$  by replacing all the bosonic variables with $\xi^{2 } $ and the single fermionic variable with $\xi$.
\end{exercise}
Both the first and second fundamental theorem for matrices are not as precise as the
corresponding theorems for vectors and forms.  

Several things are lacking;   a description of a minimal set
of generators for the trace invariants and a description of a minimal set
of generators of the algebra $\mathcal E_n\langle X,Y\rangle$  of equivariant maps from $m$--tuples of matrices to matrices  as   module over the ring on invariants. \medskip

The results are identical to the classical results for which at  the moment the best result is given by the following:
\begin{theorem}[Razmyslov] \label{fgrm}The invariants of $n\times n$ matrices are generated by the elements
$tr(M)$ where $M$ is a monomial of degree $\leq n^2$.\smallskip

The algebra  $\mathcal E_n\langle X,Y\rangle$  of equivariant maps from $m$--tuples of matrices to matrices is spanned as a module over the ring on invariants, by monomials of degree $<n^2$.
\end{theorem}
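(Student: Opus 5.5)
The plan is to reduce to the classical Razmyslov theorem for bosonic matrices, using the Grassmann-variable trick of Remark \ref{alte}, and then to pass back through polarization.

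First, polarization. By the Aronhold method and Proposition \ref{siman}, every invariant is obtained by restitution from multilinear invariants. Restitution sends $tr(M)$, with $M$ a multilinear monomial, to $tr(M')$, where $M'$ is a monomial of the same degree. Hence it suffices to show the following: every multilinear invariant in $y_1,\dots,y_e,x_1,\dots,x_f$ is a polynomial in traces $tr(M)$ of multilinear monomials of degree $\le n^2$. For matrix-valued elements it suffices to show that each is a combination, with invariant coefficients, of monomials of degree $<n^2$.

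Second, the multilinear case, which is a statement purely about the group algebra. By Proposition \ref{lphi} and Theorem \ref{phip}, the multilinear invariants are the images $ev(\Phi_\sigma)$ for $\sigma\in S_m$. By Proposition \ref{emp}, $ev(\Phi_\sigma)$ is obtained from the classical Kostant trace $tr(z_{i_1}\cdots)\cdots$ by substituting $z_{e+i}=\epsilon_i\xi_i$ and stripping off $\epsilon_1\cdots\epsilon_f$ up to a sign that depends only on $\sigma$. The classical Razmyslov theorem (Theorem \ref{fgrm} for $f=0$) gives, in the multilinear setting, an identity in $\C[S_m]$ modulo the kernel $K_{m,n}$. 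That identity expresses each $\sigma$ as a linear combination of elements $\tau$ all of whose cycles have length $\le n^2$; likewise, for the matrix-valued case with $S_{m+1}$, the cycle through $m+1$ has length $\le n^2$, so the remaining string has length $<n^2$.

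Since $K_{m,n}$ is exactly the kernel of the map to invariants for every coloring $e+f=m$ (the discussion opening the Relations section), the same identity holds in the super setting. Applying the encoding map $\tau_{C_{e,f}}$ is linear, and it sends each such $\tau$ to $\pm$ a product of traces of monomials of degree $\le n^2$, respectively to such a product times a monomial of degree $<n^2$. This proves the multilinear statement. Alternatively, one can work directly over the commutative ring containing the $\epsilon_i\xi_i$: apply the classical theorem there and then extract the coefficient of $\epsilon_1\cdots\epsilon_f$. I would probably use this second route to avoid reproving Razmyslov combinatorially.

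The main obstacle is legitimacy of the transfer. The classical theorem is stated for generic matrices over a commutative field, whereas $\epsilon_i\xi_i$ are matrices over a commutative ring with nilpotent entries. I would argue that the classical statement, in its multilinear form, is an identity in $\C[S_m]/K_{m,n}$, hence a formal trace identity, so it specializes to any commutative ring. I also need to be careful that the signs $\epsilon_C$ introduced by the encoding never force a cancellation that would matter. Signs only rescale individual terms by $\pm1$, and the degree bound concerns only which monomials appear, so this causes no difficulty.
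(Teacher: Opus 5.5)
Your proposal is correct and is essentially the argument the paper has in mind. The paper gives no separate proof of this theorem, only the remark that the results are identical to the classical ones. That remark rests on exactly the fact you use: for every coloring $e+f=m$, multilinear invariants are $\C[S_m]/K_{m,n}$ (and, via $tr(f\,x_{m+1})$, multilinear equivariants are $\C[S_{m+1}]/K_{m+1,n}$), so the classical multilinear identity transfers through the encoding map up to signs, with polarization and restitution handling the general case.
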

It is conjectured that the best estimate should be  $<\binom n2$. Kuzmin has proved that this is a lower bound, \cite{Kuz}.

\subsection{The Poincar\'e series  and cocharacters\label{Picoc}}

We consider $\N$ multi graded vector spaces that is a vector space $V=\oplus_{\underline n\in \N^k}V_{\underline n}$. 

Here $\underline n=(n_1,n_2,\cdots,n_k)$  where $k$ could also be $\infty$ but then we assume that $\underline n$ has finite support.\medskip

We further assume that  $\dim V_{\underline n}<\infty $ for all $\underline n$ and then we write this information in a series
\begin{equation}\label{pos}
\mathcal P(V)=\sum_{\underline n\in \N^k}\dim V_{\underline n}\,t^{\underline n}\,.
\end{equation}
Where $t$ is a list of variables $t_1,\cdots, t_k$ and $t^{\underline n}=\prod_it_i^{n_i}$.\medskip

We call this the {\em multigraded Hilbert--Poincar\`e series of $V$}.  This series  has the simple properties, for two graded vector spaces $V_1,V_2$:
\begin{equation}\label{supp}
\mathcal P(V_1\oplus V_2)=\mathcal P(V_1)+\mathcal P(V_2),\  \mathcal P(V_1\otimes V_2)=\mathcal P(V_1)\mathcal P(V_2)\,.
\end{equation}
It also has a nice expression under the construction of symmetric or exterior algebra.

For a vector  $v$ of multidegree $\underline n=(n_1,n_2,\cdots,n_k)$ we have $S(Fv)$ has basis  $v^m,\ m=0,\infty$ so $\mathcal P(S(Fv))=\frac1{1-t^{\underline n}}$  and $\bigwedge Fv $ has basis  $1,\ v $  so $\mathcal P(\bigwedge Fv) = 1+t^{\underline n}.$

In our treatment we want to introduce also fermionic indices so we consider a multigraded  supervector space $V=\oplus_{\underline n,\underline m\in \N^k}V_{\underline n,\underline m}$.

 We then assign $\Z/(2)$ degree 0  to the spaces for which $|\underline m|=\sum_im_i$ is even and 1 if it is odd.\smallskip

In the paper \cite{zm} the authors introduce also an {\em index},  if one has both bosonic and fermionic indices  then one can speak of fermionic space when the sum of the fermionic indices is odd.  Then one  assigns to such a space the negative dimension. 
Taking account the sign we have a super Poincar\'e series, called {\em index} in \cite{zm}  and given by:

 \begin{equation}\label{pos1}
\mathcal I(V)=\sum_{\underline n\in \N^k}(-1)^{|\underline m|}\dim V_{\underline n,\underline m}\,t^{\underline n}u^{\underline m}
\end{equation} 
The analogous of Formula \eqref{supp}  holds for the index.\medskip

One way of forming this is the following, iven a  multi graded vector space $V$ we double it as $V_0\oplus V_1$  and form  $\mathbb K[  V_0\oplus V_1]=S(V_0)\otimes \bigwedge V_1$ and we have, introducing variables $u_i$ for the fermionic indices.
\begin{align}\label{pok}
\mathcal P(\mathbb K[  V_0\oplus V_1])=\prod_{\underline n} \frac1{(1-t^{\underline n})^{\dim V_{\underline n}}}\prod_{\underline n}  (1+u^{\underline n})^{\dim V_{\underline n}}\\
\mathcal I(\mathbb K[  V_0\oplus V_1])=\prod_{\underline n} \frac1{(1-t^{\underline n})^{\dim V_{\underline n}}}\prod_{\underline n}  (1+(-u)^{\underline n})^{\dim V_{\underline n}}\,.
\end{align}
Setting $u=t$ one has the remarkable identity $\mathcal I(\mathbb K[  V_0\oplus V_1])[t,t]=1$.\medskip

The previous formula for index  is valid also for the invariants, where we do not have a very explicit formula  for the series. 

In fact this is a general fact  for $V$ and graded representation of a compact Lie group and follows from the Weyl--Molien Formula.\smallskip

If  $V$ is graded representation of a compact Lie group $L$  one writes the character of the representation restricted to a maximal torus $T$ (in case of $U(N)$ the diagonal matrices).   Then each representation $ V_{\underline n}$ has a basis $v_1,\cdots, v_m$ of eigenvectors for $T$  with eigenvalues  characters $\chi(i)$ of $T$.

Then one can incorporate the character in the Poincar\'e series replacing $(1-t^{\underline n})^{\dim V_{\underline n}}$ with 
$\prod_{i=1}^{\dim V_{\underline n}}(1-\chi(i)t^{\underline n})$
\begin{equation}\label{pok2}
\mathcal P_\chi(\mathbb K[  V_0\oplus V_1])=\prod_{\underline n} \frac1{\prod_{i=1}^{\dim V_{\underline n}}(1-\chi(i)t^{\underline n})}\prod_{\underline n} \prod_{i=1}^{\dim V_{\underline n}} (1+\chi(i)u^{\underline n}) ) \,.
\end{equation} Then for the ring of invariants one has to integrate this series, as function on $T$ via the characters,  times the Weyl factor $\Delta\bar\Delta$ and divided by the order of the Weyl group $|W|$  \begin{equation}\label{pok3}
\mathcal P(\mathbb K[  V_0\oplus V_1]^L)=\frac 1{|W|}\int_T\Delta\bar\Delta \prod_{\underline n} \frac1{\prod_{i=1}^{\dim V_{\underline n}}(1-\chi(i)t^{\underline n})}\prod_{\underline n} \prod_{i=1}^{\dim V_{\underline n}} (1+\chi(i)u^{\underline n}) ) d\nu\,.
\end{equation}
With $d\nu$ the invariant measure of volume 1 on $T$.\medskip

\begin{theorem}\label{ind}
Setting $t=u$ and changing the signs for the fermionic characters one   gets 1 for the index $\mathcal I(\mathbb K[  V_0\oplus V_1]^L)=1$.
\end{theorem}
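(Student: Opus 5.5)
The plan is to compute the index directly from the Weyl--Molien integral \eqref{pok3}. The two properties it needs are that the index is multiplicative, by the analogue of \eqref{supp}, and that it is invariant-compatible. Concretely, the index of the invariants is obtained from \eqref{pok3} by replacing each fermionic factor $(1+\chi(i)u^{\underline n})$ with $(1-\chi(i)u^{\underline n})$. That is the "changing the signs for the fermionic characters'' in the statement. The substitution is legitimate because the grading by $T$-weights and multidegree commutes with taking the $\Z/(2)$ degree: a monomial in $\bigwedge V_1$ of fermionic multidegree $\underline m$ has parity $|\underline m|$, exactly as in \eqref{pos1}. So I will first write
$$\mathcal I(\mathbb K[V_0\oplus V_1]^L)=\frac 1{|W|}\int_T\Delta\bar\Delta \prod_{\underline n}\prod_{i=1}^{\dim V_{\underline n}} \frac{1-\chi(i)u^{\underline n}}{1-\chi(i)t^{\underline n}}\, d\nu ,$$
justifying it from the Weyl integration formula. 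The argument is that the projection to invariants is the integral of the torus character against $\Delta\bar\Delta/|W|$, applied degreewise, and the sign $(-1)^{|\underline m|}$ is a grading-dependent constant on each multihomogeneous piece. It therefore commutes with the integration.

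The second step is the key cancellation. Here $V_0$ and $V_1$ are the same $L$-representation $V$ placed in the two parities, so the characters $\chi(i)$ attached to the weight basis of $V_{\underline n}$ are identical in the bosonic and fermionic products. After setting $u=t$, each factor in the integrand becomes $\frac{1-\chi(i)t^{\underline n}}{1-\chi(i)t^{\underline n}}=1$. This is the torus-equivariant refinement of the identity $\mathcal I(\mathbb K[V_0\oplus V_1])[t,t]=1$ noted before the theorem.

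The third step evaluates what remains, namely $\frac1{|W|}\int_T\Delta\bar\Delta\, d\nu$. By the Weyl integration formula applied to the trivial character, this equals the multiplicity of the trivial representation in the trivial representation, which is $1$. Equivalently, one can compute it directly: $\Delta=\sum_{w\in W}\epsilon(w)e^{w\rho}$ has $|W|$ orthonormal terms, so $\int_T|\Delta|^2=|W|$.

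The main obstacle is making the formal manipulations rigorous, since $\mathcal I$ is a formal power series whose coefficients are each computed by a finite integral. I would handle this coefficientwise. For fixed multidegree, only finitely many terms of the product contribute, so integration and the substitution $u=t$ commute with extracting coefficients. Because $t=u$ merges gradings, one must check that each coefficient of the merged series is still a finite sum. This holds when all multidegrees $\underline n$ are nonzero, which I would assume, as is implicit in the setup. The same reasoning also gives the cleaner structural statement underlying the theorem: the Koszul-type complex $S(V_0)\otimes\bigwedge V_1$ with differential $V_1\to V_0$ is $L$-equivariant and acyclic apart from $\C$ in degree $0$, and taking invariants is exact for a reductive $L$. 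I would mention this as an alternative proof.
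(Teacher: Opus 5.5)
Your proposal is correct and is essentially the argument the paper intends; the paper gives no separate proof, only the remark that the result follows from the Weyl--Molien integral. That argument is: flip the sign of the fermionic characters, note that at $u=t$ each factor $\frac{1-\chi(i)t^{\underline n}}{1-\chi(i)t^{\underline n}}$ cancels, and conclude with $\frac{1}{|W|}\int_T\Delta\bar\Delta\,d\nu=1$; your coefficientwise justification and your Koszul-complex alternative are welcome additions. One small caveat: your claim that a monomial of fermionic multidegree $\underline m$ has parity $|\underline m|$ holds only when every generator of $V_1$ has odd total degree (true in the matrix case, where $|\underline n|=1$), but this does not affect the argument, since you actually attach the sign per fermionic generator, which is what ``changing the signs for the fermionic characters'' literally asks for.
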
\bigskip

In the case of $\mathbb K(V^*)=S[\eta_{h,k}^{(i)}]\otimes \bigwedge[\xi_{h,k}^{(i)}] $ we have
\begin{equation}\label{MoW0}
\mathcal P_\chi(\mathbb K(V^*))= \frac{ \prod_{k=1}^\infty\prod_{i,j=1}^n(1+z_i^{-1}z_ju_k)}{\prod_{k=1}^\infty\prod_{i,j=1}^n(1-z_i^{-1}z_jt_k)} 
\end{equation}

%
The Molien--Weyl Formula   gives:
\begin{equation}\label{MoW1}
\mathcal P(\mathbb K(V^*)^{U(n)})=\frac 1{n!}\int_T\frac{\prod_{i\neq j}(1-z_i^{-1}z_j)\prod_{k=1}^\infty\prod_{i,j=1}^n(1+z_i^{-1}z_ju_k)}{\prod_{k=1}^\infty\prod_{i,j=1}^n(1-z_i^{-1}z_jt_k)}d\nu
\end{equation}
$$T=\{(z_1,\cdots,z_n)\mid |z_i|=1\},\qquad d\nu=\frac 1{(2\pi\ii)^n}\frac{dz_1\wedge\cdots \wedge dz_n}{z_1\cdots z_n}\,.$$
Notice that in the language of \cite{zm} the variables $t_k,u_k$  can be replaced  by powers   $t_k\mapsto q^{c(k)},\  u_k\mapsto q^{d(k)},\ c(k),\ d(k)\in\N$ are some {\em charges} in particular $c(k)=d(k)=k$.

Then the Poincar\'e series becomes a series in $q$  where the coefficient of $q^\ell$ is the dimension of the space of elements of charge $\ell$ (which is formed by elements of different degrees).

 By Theorem \ref{ind} as soon as the bosonic variables have the same charge as the fermionic variables the index vanishes.
 
 \subsubsection{Comparing with \cite{zm}}
 
 In this paper the Formulas are written in a different form and for purely fermionic matrices $V^*_1$ they prove the remarkable surprising fact (\S 3 Rank--invariance of $\frac 1
4$--BPS index)  that 
 \begin{theorem}\label{inden}
The index is independent of $n$ provided we replace the variable  $u_k$ with the charge $q^k$. Precisely the integral
 \begin{equation}\label{MoW4}
\mathcal I(\mathbb K(V^*_1)^{U(n)})=\frac 1{n!}\int_T  \prod_{i\neq j}(1-z_i^{-1}z_j)\prod_{k=1}^\infty\prod_{i,j=1}^n(1-z_i^{-1}z_jq^k)d\nu
=\prod_{k=1}^\infty(1-q^k)\end{equation} 
\end{theorem}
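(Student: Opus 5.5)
The plan is to reduce the integral to a constant term computation and then evaluate it with the Macdonald identity for the affine root system of type $A_{n-1}$. Write $x_{ij}:=z_i^{-1}z_j$ and $(x;q)_\infty:=\prod_{k\ge 0}(1-xq^k)$.

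\emph{Step 1: split off the diagonal.} In $\prod_{k\ge1}\prod_{i,j}(1-x_{ij}q^k)$ the $n$ diagonal factors $i=j$ have $x_{ii}=1$. They contribute exactly $(q;q)_\infty^n$, which comes out of the integral. The Weyl factor $\prod_{i\neq j}(1-x_{ij})$ is precisely the missing $k=0$ term of the off-diagonal product. Therefore the left side of \eqref{MoW4} equals
$$(q;q)_\infty^n\cdot \frac1{n!}\,\mathrm{CT}_z\prod_{i\neq j}(x_{ij};q)_\infty ,$$
where $\mathrm{CT}_z$ denotes the constant term in $z$, since integration against $d\nu$ is the constant term. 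The theorem thus becomes the identity
$$\frac1{n!}\,\mathrm{CT}_z\prod_{i\neq j}(z_j/z_i;q)_\infty=(q;q)_\infty^{1-n}.$$
For $n=1$ this is trivially true, which is a useful consistency check.

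\emph{Step 2: rewrite with theta functions.} For each pair $i<j$, set $x=z_j/z_i$ and use $(x^{-1};q)_\infty=(1-x^{-1})(qx^{-1};q)_\infty$ to group the two factors as
$$(x;q)_\infty(x^{-1};q)_\infty=(1-x^{-1})\,\frac{\theta(x)}{(q;q)_\infty},\qquad \theta(x):=(x;q)_\infty(q/x;q)_\infty(q;q)_\infty .$$
The integrand then becomes
$$(q;q)_\infty^{-\binom n2}\prod_{i<j}(1-z_i/z_j)\prod_{i<j}\theta(z_j/z_i).$$
The first product is the finite Weyl denominator of $A_{n-1}$. The second, multiplied by $(q;q)_\infty^{n-1}$, is the affine Weyl denominator. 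By the Macdonald identity (the affine Weyl--Kac denominator formula for $\widehat{\mathfrak{sl}}_n$), it expands as an alternating sum over the affine Weyl group $W\ltimes Q^\vee$:
$$\sum_{w\in W}\sum_{\beta\in Q^\vee}\epsilon(w)\,q^{\,\frac n2|\beta|^2+\langle\beta,\rho\rangle}\,z^{\,w(\rho+n\beta)-\rho}.$$
The finite denominator is likewise expanded by the Weyl denominator formula as $\sum_{u\in W}\epsilon(u)z^{\rho-u\rho}$ (up to a monomial normalization I would fix carefully).

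\emph{Step 3: extract the constant term.} This is the step I expect to be the main obstacle. Multiplying the two expansions, the constant term collects the pairs $(u,w,\beta)$ for which the exponents cancel, i.e. $w(\rho+n\beta)=u\rho$. Because $\rho$ is regular, I expect this to force $\beta=0$ and $w=u$. That would give $n!$ terms, each with sign $+1$ and $q$-power $0$, so the constant term would be $n!\,(q;q)_\infty^{\binom n2-(n-1)}$. Combined with the prefactor $(q;q)_\infty^{-\binom n2}$ from Step 2, this yields exactly $n!\,(q;q)_\infty^{1-n}$, which is what Step 1 requires.

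The delicate points are two. First, the normalization and sign conventions: which side of the quotient $z_i/z_j$ versus $z_j/z_i$ appears in each expansion, and the correct exponent of $(q;q)_\infty$ in the Macdonald identity. Second, checking that no $\beta\neq0$ term survives; a dominance argument comparing $\rho+n\beta$ with the $W$-orbit of $\rho$ should settle this. If the bookkeeping fails, my fallback is to apply the Jacobi triple product directly for $n=2$, and then for general $n$ to prove independence of $n$ by integrating out one variable $z_n$. In that route the key lemma would be that the $z_n$-constant term of the $n$-variable integrand equals $(q;q)_\infty^{-1}$ times the $(n-1)$-variable integrand after symmetrization.
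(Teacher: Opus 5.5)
Your argument is correct, apart from one normalization slip noted at the end. After the common Step~1 it takes a genuinely different route from the paper.

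The paper, following \cite{zm}, writes the product in \eqref{andr} as $\prod_{i<j}(1-z_i^{-1}z_j)f(z)$ with $f$ symmetric. It averages over $S_n$ to see that this constant term coincides with that of \eqref{mani}. It then quotes the Zeilberger--Bressoud theorem \cite{bz}, i.e.\ Andrews' $q$-Dyson conjecture in the limit $a_i\to\infty$.

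You never symmetrize. You expand the finite factor by the Weyl denominator formula and the affine factor by the Macdonald identity for $\widehat{\mathfrak{sl}}_n$, and pair terms. Your Step~3 closes immediately:
\begin{itemize}
\item $w(\rho+n\beta)=u\rho$ gives $n\beta=w^{-1}u\rho-\rho$.
\item The coordinates of the right side are differences of entries of $\rho$, so they have absolute value at most $n-1$.
\item The coordinates of $n\beta$ are multiples of $n$, so $\beta=0$.
\item Regularity of $\rho$ then gives $w=u$.
\end{itemize}
Conceptually, this says that $\rho/n$ lies in the open fundamental alcove. The dominance heuristic and the fallback of integrating out $z_n$ are not needed.

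What your route buys is that it exposes the real input. Formula \eqref{andr} says exactly that the constant term of the affine denominator $D=(q;q)_\infty^{n-1}\prod_{i<j}(z_i^{-1}z_j;q)_\infty(qz_j^{-1}z_i;q)_\infty$ equals $1$. That is the case $u=1$ of your lattice argument. So the deep finite-$a$ $q$-Dyson theorem is not needed, only the Macdonald identity, and the alcove form of the argument adapts to other compact groups acting on fermionic copies of the adjoint representation. The paper's route is shorter on the page given the citation. Its symmetrization, combined with your lattice argument, reduces the count to the single term $w=1$, $\beta=0$ instead of $n!$ paired terms.

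The slip: each $\theta$ carries its own factor $(q;q)_\infty$. So the affine denominator is $(q;q)_\infty^{\,n-1-\binom n2}\prod_{i<j}\theta(z_j/z_i)$, not $(q;q)_\infty^{\,n-1}\prod_{i<j}\theta(z_j/z_i)$. The exponent $\binom n2-(n-1)$ you actually use in Step~3 is the correct one, so the conclusion stands. It is cleaner to skip $\theta$ and write $\prod_{i\ne j}(z_j/z_i;q)_\infty=(q;q)_\infty^{1-n}\prod_{i<j}(1-z_i/z_j)\,D$. Then what is needed is exactly $\mathrm{CT}\bigl[\prod_{i<j}(1-z_i/z_j)D\bigr]=n!$.
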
 Notice that  $\prod_{i,j=1}^n(1-z_i^{-1}z_jq^k)=(1-q^k)^n\prod_{i\neq j=1}^n(1-z_i^{-1}z_jq^k)$ and $(1-q^k)^{n-1}\prod_{i\neq j=1}^n(1-z_i^{-1}z_jq^k)$ is the character of trace 0 matrices.\medskip

 This identity should be understood as follow.  Replace $V^*$  with $V^*_{1,0}$ the trace 0  fermionic  matrices then one has $\mathcal I(\mathbb K(V^*_{1,0})^{U(n)})= 1$. This means 
 \begin{proposition}\label{trz} i)\quad $\mathcal I(\mathbb K(V^*_0)^{U(n)})= 1$.\smallskip

ii)\quad For each charge $\ell$ the vector space of bosonic invariants (respectively relations) for trace 0 matrices of charge $\ell$ has the same dimension as the the vector space of fermionic  invariants (respectively relations).
\end{proposition}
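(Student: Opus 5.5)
The plan is to derive the proposition from Theorem \ref{inden} by splitting off the trace part, and then to use the multiplicativity of the index under tensor products (the analogue of Formula \eqref{supp} for $\mathcal I$). I read part i) as a statement about $V^*_{1,0}$, the trace $0$ fermionic matrices, as in the sentence preceding the proposition.

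\textbf{Part i).} Decompose each fermionic summand as $M_n^*=(M_n^*)_0\oplus\C\, tr$, where $(M_n^*)_0$ corresponds to trace $0$ matrices and $\C\,tr$ to scalars. The group $G$ acts trivially on the scalar part. By Formula \eqref{gas1} this gives
$$\mathbb K(V_1^*)=\mathbb K(V^*_{1,0})\widehat\otimes \bigwedge(tr(\xi_1),tr(\xi_2),\ldots),$$
and taking invariants,
$$\mathbb K(V_1^*)^{U(n)}=\mathbb K(V^*_{1,0})^{U(n)}\widehat\otimes\bigwedge(tr(\xi_k)).$$
Give $tr(\xi_k)$ charge $k$ and odd parity. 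Then the index of the exterior factor is $\prod_k(1-q^k)$. Multiplicativity of $\mathcal I$ together with Theorem \ref{inden} yields
$$\mathcal I\big(\mathbb K(V^*_{1,0})^{U(n)}\big)\prod_k(1-q^k)=\prod_k(1-q^k).$$
Since $\prod_k(1-q^k)$ is invertible in $\Z[[q]]$, we get $\mathcal I\big(\mathbb K(V^*_{1,0})^{U(n)}\big)=1$. Equivalently, one can redo the Molien--Weyl integral \eqref{MoW4} with the factor $(1-q^k)^n$ replaced by $(1-q^k)^{n-1}$, using the remark after Theorem \ref{inden} that this is the character of trace $0$ matrices.

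\textbf{Part ii), invariants.} Every charge is $\geq1$, so each charge $\ell$ piece is finite dimensional. Its contribution to $\mathcal I$ is $\dim(\text{even part})-\dim(\text{odd part})$, where the even part is the bosonic invariants and the odd part the fermionic ones. Since $\mathcal I=1$, this difference vanishes for every $\ell\ge1$, which is the statement for invariants.

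\textbf{Part ii), relations.} Let $\mathcal T^0$ denote the free trace algebra on the fermionic variables with the relations $t(x_k)=0$ imposed. By Theorem \ref{FFT}, $\mathcal T^0$ surjects onto $\mathbb K(V^*_{1,0})^{U(n)}$; call the kernel $J_n$. The surjection is graded by charge and parity, so in each charge $\ell$
$$\mathcal I(J_n)_\ell=\mathcal I(\mathcal T^0)_\ell-\mathcal I(\text{invariants})_\ell=\mathcal I(\mathcal T^0)_\ell.$$
It therefore remains to show that $\mathcal I(\mathcal T^0)_\ell=0$ for $\ell\ge1$. For this, fix $\ell$ and choose $N\geq\ell$. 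Elements of charge $\ell$ have degree at most $\ell\le N$. By Theorem \ref{trnm}, or already by the vanishing of $K_{m,N}$ for $m\le N$, there are no trace relations in degree $\le N$ for $N\times N$ matrices. Hence the charge $\ell$ piece of $\mathcal T^0$ maps isomorphically onto that of the $U(N)$-invariants, and by part i) for $N$ its index vanishes.

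The main obstacle is this last step. One has to check that imposing the trace $0$ condition is compatible with the multilinear, group algebra description of relations, so that absence of relations in low degree persists for trace $0$ variables. I would handle this by applying the tensor decomposition of part i) at the level of the free algebra $\mathcal T$ itself, namely
$$\mathcal T=\mathcal T^0\widehat\otimes\bigwedge(t(x_k)),$$
and then specializing $t(x_k)\mapsto0$.
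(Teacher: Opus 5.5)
Your proposal is correct and follows the paper's proof. You split off $\bigwedge(tr(\xi_k))$ and use multiplicativity of $\mathcal I$ with Theorem \ref{inden} for i), read off the invariants statement from $\mathcal I=1$, and obtain the relations statement from the vanishing index of the free trace $0$ algebra; your choice of $N\ge\ell$ is a precise version of the paper's ``for $n=\infty$ there are no relations''. The obstacle you flag, which the paper passes over, is closed by making the splitting of $\mathcal T$ compatible with evaluation via the substitution $x_k\mapsto x_k-\frac1N t(x_k)$ (with $t(1)=N$): if a lift $\tilde p\in\mathcal T$ of $p\in\mathcal T^0$ of degree $\le N$ vanishes on traceless matrices, then $\tilde p(x_k-\frac1N t(x_k))$ is a trace relation of degree $\le N$, hence $0$, and reducing it modulo the ideal generated by the $t(x_k)$ gives $p=0$.
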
   
\begin{proof}
Decompose  $V_1^*=T\oplus V_{1,0}^*$  with $T$ the vector space with basis  the invariant elements $tr(\xi_i)$.\smallskip
 
Then $\mathbb K(V^*_1)= \mathbb K(T)\otimes  \mathbb K(V^*_{1,0})$ and, since $\mathbb K(T)$ is an invariant algebra we have
$$ \mathbb K(V^*_1)^{U(n)}\simeq \mathbb K(T)\otimes \mathbb K(V^*_{1,0})^{U(n)}\implies  \mathcal I(\mathbb K(V^*_1)^{U(n)})= \mathcal I( \mathbb K(T)) \mathcal I( \mathbb K(V^*_{1,0})^{U(n)})\,.$$
Now $\mathcal I( \mathbb K(T))   =\prod_{k=1}^\infty(1-q^k)$  hence the claim of i).\smallskip

Part i) implies that for each charge $\ell$ the vector space of bosonic invariants   for trace 0 matrices of charge $\ell$ has the same dimension as the the vector space of fermionic  invariants. In particular  for $n=\infty$  there are no relations so that the space of  bosonic  formulas   for trace 0 matrices of charge $\ell$ has the same dimension as the the vector space of fermionic   formulas, so the claim for relations follows.
\end{proof}
This proposition says nothing about how to establish a linear isomorphism between  the two spaces for a given charge $\ell$.

In fact I suspect that there is no natural  isomorphism. This is quite mysterious even for the formulas.
\begin{example}\label{nd}
Foe $n=2$  the first relations appear with charge 6. A fermionic  $T_{0,3}(x_1,x_2,x_3)$ in degree 3 and the bosonic $T_{1,2}(x_1^2,x_1 ,x_2)$ of degree 4.

There is no apparent relation between the two.
\end{example}
 One starts with the Formula
 $$\frac 1{1-t}=\exp(-\log(1-t))=e^{ \sum_{k=1}^\infty \frac 1k t ^k} $$
 then 
 $$\prod_{i,j=1}^n(1-z_i^{-1}z_jt_k))^{-1}= e^{\sum_{h=1}^\infty \frac 1h \sum_{i,j=1}^n(z_i^{-1}z_j)^ht_k ^h} $$
 So if $U$ is a matrix with eigenvalues  $z_i$  we have $\sum_{i,j=1}^n(z_i^{-1}z_j)^h=tr(U^h) tr(U^{-h}).$

 Finally 
 \begin{equation}\label{fors}
\frac 1{\prod_{k=1}^\infty\prod_{i,j=1}^n(1-z_i^{-1}z_jt_k)}=e^{\sum_{h=1}^\infty \frac 1h(\sum_{k=1}^\infty t_k ^h) tr(U^h) tr(U^{-h})} 
\end{equation}
 Of course $tr(U^h) tr(U^{-h})=tr((U\otimes U^{-1})^h)$ and $U\otimes U^{-1}$ is the matrix of $U$ in the conjugation representation.
 
Similarly for fermionic matrices where the formula to start from is. 
 $$ 1+t =\exp( \log(1+t))=e^{- \sum_{k=1}^\infty \frac 1k(- t )^k} $$
Then the Poincar\'e series for the invariants of bosonic and fermionic matrices is obtained by integration
 \begin{equation}\label{mas}
 \int_{U(n)}e^{\sum_{h=1}^\infty \frac 1h(\sum_{k=1}^\infty (t_k ^h-(-1)^hu_k^h)) tr(U^h) tr(U^{-h})} du
\end{equation}with $du$ normalyzed Haar measure. From this by Weyl integration formula one has Formula \eqref{MoW1}.
 
 A similar manipulation could be performed to Formula \eqref{pok2} but we leave this.\medskip
 
 In his paper   \cite{sm} Sameer Murthy  transforms formula \eqref{mas} in a very interesting more computable form. Let us denote by $d(h):=(\sum_{k=1}^\infty (t_k ^h-(-1)^hu_k^h))$ then expanding the exponential in power series one has
 \begin{equation}\label{mas1}
 \int_{U(n)} \sum_{j=0}^\infty \frac 1 {j!}\left(\sum_{h=1}^\infty \frac{d(h)}h tr(U^h) tr(U^{-h})\right )^j du
\end{equation}
Then compute the $j^{th}$ term as a sum over  partitions $\lambda\vdash j:\ 1^{a_1}2^{a_2}\cdots r^{a_r},$  i.e. $j=\sum a_ii$ setting  $\prod_i        tr(U^{ i})^{a_i}:=tr_\lambda(U)$.
 $$\sum_{\lambda\vdash j}\frac 1{1^{a_1}2^{a_2}\cdots r^{a_r}}\binom j{1^{a_1},2^{a_2},\cdots, r^{a_r}}\prod d(i)^{a_i}  \int_{U(n)}tr_\lambda(U)\overline{tr_\lambda(U)}du$$
 Then then has the usual Frobenius Formula  
 $$tr_\lambda(U)=\sum_{\mu\vdash j}\chi_\mu(\lambda) S_\mu(U)\,.$$
 Where $S_\mu(U)$ is the character of the representation of $U(n)$ associated to  $\mu$.  For $U(n)$  one has the bound $ht(\mu)\leq n$ on the height of $\mu$.\smallskip
 
 Using the orthonormality of the characters of  $U(n)$  one has finally  
 $$\int_{U(n)}tr_\lambda(U)\overline{tr_\lambda(U)}du=\sum_{\mu\vdash j\mid ht(\mu)\leq n}\chi_\mu(\lambda) ^2\,.$$
 
 Formula \eqref{mas} becomes thus, setting $$c(\lambda):=\frac 1{1^{a_1}2^{a_2}\cdots r^{a_r}}\binom j{1^{a_1},2^{a_2},\cdots, r^{a_r}},\ d(\lambda):=\prod d(i)^{a_i} $$
 \begin{equation}\label{mas2}\text{Formula \eqref{mas}}\ =
\sum_{j=0}^\infty \frac 1 {j!}\sum_{\lambda\vdash j} c(\lambda) d(\lambda) \sum_{\mu\vdash j\mid ht(\mu)\leq n}\chi_\mu(\lambda) ^2\,.
\end{equation}
 \medskip

\subsubsection{A different approach}\quad  Formulas are written using the the $q$-Pochhammer symbol $$(x; q)_n =
\prod^{n-1}
_{i=0} (1-x q^i)\implies (q;q)_\infty=\prod^{\infty}
_{i=1} (1- q^i).$$
 
 The computation in \cite{zm}  is the following:
 
 \begin{align*} 
&\mathcal I(\mathbb K(V_1^*)^{U(n)})=\frac 1{n!}\int_T {\prod_{i\neq j}(1-z_i^{-1}z_j)\prod_{k=1}^\infty\prod_{i,j=1}^n(1-z_i^{-1}z_jq^k )} d\nu
\\&=\frac 1{n!}\int_T { \prod_{k=1}^\infty(1-q^k)^n\prod_{i\neq j }^n  \prod_{k=0}^\infty (1-z_i^{-1}z_jq^k )} d\nu\\&=\frac {(q;q)_\infty^n}{n!}\int_T {  \prod_{i\neq j }^n  \prod_{k=0}^\infty (1-z_i^{-1}z_jq^k )} d\nu\,.
\end{align*}
 
 So one needs to compute the constant coefficient of
\begin{equation}\label{mani}
\frac 1{n!}\prod_{i\neq j }^n  \prod_{k=0}^\infty (1-z_i^{-1}z_jq^k )=\frac 1{n!}\prod_{i\neq j }^n (z_i^{-1}z_j;q)_\infty
\end{equation}
 which, is reduced in \cite{zm} to   a   deep combinatorial Theorem and equals $ (q;q)_\infty^{1-n}$  so that finally Formula \eqref{MoW4} holds.\smallskip
 
 The combinatorial Theorem was conjectured by Andrews, as a $q$--analogue of a conjecture by Dyson of 1962, and proved by Doron  Zailberger 
 and 
David M. Bressoud  in 1984 \cite{bz}. In this paper one can also find interesting historical background.\smallskip
 
 Their Theorem in fact is  more general  but in our case it is 
 
 \begin{equation}\label{andr}
\prod_{i< j }^n (z_i^{-1}z_j;q)_\infty\prod_{i< j }^n (qz_j^{-1}z_i;q)_\infty\stackrel{\text{constant coefficient}}\mapsto (q;q)_\infty^{1-n}\,.
\end{equation} we need to use this for computing  \eqref{mani}.  \smallskip

The clever proof in \cite{zm} goes as follows.  Notice that Formula \eqref{andr} can be written as $$\prod_{i< j }^n(1- z_i^{-1}z_j)f(z),\quad\text{with}\quad f(z)=\prod_{i< j }^n (qz_i^{-1}z_j;q)_\infty\prod_{i< j }^n (qz_j^{-1}z_i;q)_\infty$$ a symmetric function. Therefore the constant term is also the constant term of its average
$$ \frac 1{n!}\sum_{\sigma\in S_n}\prod_{i< j }^n(1- z_{\sigma(i)}^{-1}z_{\sigma(j)})f(z)$$
next they observe that 
$$\prod_{i< j }^n(1- z_i^{-1}z_j)=\prod_iz_i^{n-i}\prod_{i< j }^n(z_i- z_j) =\prod_iz_i^{n-i}V(z_1,\cdots, z_n)$$ with $V(z_1,\cdots, z_n)$ the Vandermonde determinant so
$$ \sum_{\sigma\in S_n}\prod_{i< j }^n(1- z_{\sigma(i)}^{-1}z_{\sigma(j)})= [\sum_{\sigma\in S_n}\epsilon_\sigma \prod_iz_{\sigma(i)}^{n-i}]V(z_1,\cdots, z_n) $$$$=V(z_1^{-1},\cdots, z_n^{-1})V(z_1,\cdots, z_n)=\prod_{i\neq j }^n(1- z_i^{-1}z_j)\,.$$ Then
 \begin{equation}\label{mani1}\frac 1{n!}\sum_{\sigma\in S_n}\prod_{i< j }^n(1- z_{\sigma(i)}^{-1}z_{\sigma(j)})f(z)=\frac 1{n!}\prod_{i\neq j }^n(1- z_i^{-1}z_j)f(z)
\end{equation}
which is Formula \eqref{mani} and the computation follows.
\bigskip

A special case of Proposition \ref{trz} is the following.\begin{proposition}\label{trz}   
 
  For each charge $\ell$ the vector space $\mathcal T_\ell^b$ of bosonic elements in the free algebra $\mathcal T$  for trace 0 matrices of charge $\ell$ has the same dimension as the the vector space of fermionic  elements $\mathcal T_\ell^b$.
\end{proposition}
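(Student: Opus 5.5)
The plan is to read the statement as the ``$n=\infty$'' case of Proposition \ref{trz} ii), and to prove it directly at the level of the free algebra $\mathcal T$ by a Poincar\'e/index computation. The idea is that the index of the free algebra, graded by charge, equals $1$. The free trace algebra in trace-zero variables plays the role of $\mathbb K(V^*_{1,0})^{U(n)}$ for $n$ large. Indeed, by Theorem \ref{trnm} there are no relations in degree $\leq n$, so for a fixed charge $\ell$ the map from $\mathcal T$ to $\mathcal T_n$ is injective in charge $\ell$ once $n>\ell$; every variable has charge at least $1$, so the degree is at most $\ell$.

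\emph{Step 1: reduction to invariants.} Fix $\ell$ and choose $n>\ell$. The charge-$\ell$ component of the free trace algebra for trace-zero matrices, with the traces $tr(x_i)$ removed, maps isomorphically onto the charge-$\ell$ component of $\mathbb K(V^*_{1,0})^{U(n)}$. Surjectivity comes from the FFT, Theorem \ref{FFT}, together with the decomposition $V_1^*=T\oplus V_{1,0}^*$ used in the proof of Proposition \ref{trz}. Injectivity comes from the absence of relations, via the SFT and the kernel $K_{m,n}=0$ for $m\le n$. This isomorphism preserves the $\Z/(2)$ grading, so bosonic elements go to bosonic elements and fermionic to fermionic.

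\emph{Step 2: the index.} By Proposition \ref{trz} i) we have $\mathcal I(\mathbb K(V^*_{1,0})^{U(n)})=1$. Here I would make sure the analogous statement is applied with the same charge assignment $u_k\mapsto q^k$ as in Theorem \ref{inden}. Taking the coefficient of $q^\ell$ for $\ell>0$ gives $\dim(\text{even part})-\dim(\text{odd part})=0$ in charge $\ell$. Transporting this through Step 1 gives $\dim\mathcal T^b_\ell=\dim\mathcal T^f_\ell$.

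The main obstacle is Step 1. One must check that trace-zero matrices are correctly encoded in the free algebra; concretely, $\mathcal T$ should be taken modulo $tr(x_i)$, or equivalently one uses the tensor factorization $\mathbb K(T)\otimes\mathbb K(V^*_{1,0})$ and divides by $\mathcal I(\mathbb K(T))=(q;q)_\infty$. One must also check that ``no relations in degree $\le n$'' holds uniformly, including the sign subtleties: some monomials have $t(M)=0$ by the Warning after Lemma \ref{imm}. Those monomials vanish in both $\mathcal T$ and the invariants, so they are consistent. If this bookkeeping becomes delicate, the fallback is to argue via the integral \eqref{MoW4} with $n\to\infty$, which stabilizes coefficientwise to the Poincar\'e series of the free algebra.
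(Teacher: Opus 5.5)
Your proposal is correct and follows the paper's argument. The paper obtains this as the $n=\infty$ case of part ii) of the preceding proposition: it uses $\mathcal I(\mathbb K(V^*_{1,0})^{U(n)})=1$, which comes from Theorem \ref{inden} after splitting off $\mathcal I(\mathbb K(T))=\prod_k(1-q^k)$, together with the absence of trace relations in low degree. Your choice of $n\geq \ell$ makes the evaluation an isomorphism in charge $\ell$, since degree $\leq$ charge $\leq n$; this is a precise version of the paper's ``for $n=\infty$ there are no relations'', with the charge-$0$ free variable $t(1)$ understood to be discarded.
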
   
I do not see any  a priori explanation of this  which looks as a combinatorial statement.  

It should be possible to find explicit formulas. It is easy to see that the first non zero case is for $\ell=3$. The beginning of the Poicar\'e series  for  $\mathcal T_\ell^b$ is
$$1+q^3+q^4+3q^5+6q^6+{11}q^7+\cdots$$
%
\medskip

Computing the invariants of $3\times 3$ traceless fermionic matrices one gets that for charge 7 the two spaces of bosonic or fermionic invariants have dimension 10 and not 11 as for the free case, see next table. 

 In fact for fermionic matrices we have the relation $t(x_1^7)=0$ while for the bosonic  we have $\mathcal T_{2,2}(x_1^2,x_1^2,x_1,x_2 )=0.$ Explicitely of the 24 terms indexed by permutations  of $\mathcal T_{2,2}(x_1^2,x_1^2,x_1,x_2 )$ all permutations with a fixed point give 0.  Remain the 6 four cycles which give $6 tr(x_1^5x_2)$  and the 3 cycle of type 2,2, which give
$$ tr(x_1x_2)tr(x_1^4)=0,
2  tr(x_1^2x_2)tr(x_1^3)\implies 3tr(x_1^5x_2)+ tr(x_1^2x_2)tr(x_1^3)=0.$$
This can ne also deduced from \eqref{tru} by muliplication by $x_2$ and taking the trace.
\begin{remark}\label{intr}
It would be interesting to establish an explicit combinatorial bijection between bosonic and fermionic trace monomials with the same charge.

I do not see a pattern in the formulas  just displayed for $\ell\leq 7$.
\end{remark}

\bigskip

\bigskip

\begin{table}\label{tab}

  \begin{tabular}{ |p{1cm}||p{5cm}|p{5cm}|p{2cm}|  }
 \hline
 \multicolumn{4}{|c|}{List of  elements in $\mathcal T_\ell $} \\
 \hline
 Value of $\ell$& bosonic &fermionic&degrees \\
 \hline\\
$\ell =3 \dim=1$ & $ t(x_1x_2) $  &$t(x_1^3)$&   2,\ 3\\
$\ell =4$$\dim=1 $&   $t(x_1x_3)$  & $t(x_1^2x_2)$ &2,\ 3\\
 $\ell =5$ $\dim= 3$& $t(x_1x_4)$,$t(x_1^3x_2)$,   $t(x_2x_3)$    & $t(x_1^2x_3)$, $t(x_1 x_2^2)$,   $t(x_1^5)$
  &2,4,2;\ 3,3,5\\\\
  $\ell =6$  $\dim=6$ &$t(x_1x_5)$,    $t(x_1 x_2)^2 $,  $t(x_1^2x_2^2)$,  $t((x_1 x_2)^2)$,     $t(x_1^3x_3)$,   $t(x_2x_4)$  & $t(x_1^2x_4)$,    $t(x_1^4x_2)$, $t(x_1x_2x_3)$, $t(x_1x_3x_2)$,  $t(x_2^3)$, $t(x_1^3) t(x_1x_2)$ & 2,4,4,4,4,2;\ 3,5,3,3,3,5\\\\

  $\ell =7$  $\dim=11$ &$t(x_1x_6)$,   $t(x_2 x_5) $,  $t(x_3x_4)$,    $t(x_1^3x_4)$, $t(x_1 x_2)t(x_1 x_3) $, $t(x_1^2x_3x_2)$, $t(x_1^2x_2x_3) $,  $t(x_1 x_2x_1x_3)$,  $t(x_1 x_2^3) $,              $t(x_1^5x_2)$,  $t(x_1^3) t(x_1^2x_2)$ 
  
   & $t(x_1^2x_5)$,   $t(x_1x_2x_4)$,  $t(x_1x_4x_2)$,  $t(x_1x_3^2)$,  $t( x_2^2x_3)$, $t(x_1^4x_3)$, $t(x_1^3x_2^2)$, $t(x_1^3) t(x_1x_3)$,   $t(x_1^2x_2x_1x_2)$, $t(x_1^2x_2)t(x_1x_2)$, $t(x_1^7)$  & $2^3,4^6,6^2$;\ $3^5, 5^5,7$\\
  
 \hline
\end{tabular}
\end{table}
\subsubsection{Derivatives}
We have rematked from the beginning that in the free algebra  one can substitute fermionic variables only with fermionic elements (the same for bosonic) but we can make a sort of substitution with bosonic variables  developing the notion of derivative  $\pd{}{x_i}$.\medskip

We procede as follows, add a Grassmann variable $\epsilon$  which anticommutes with all the $x_i$  then  by the abstract properties of free algebras given an element $f(x_1,\cdots,x_i,\cdots)$  one can make the substitution $x_i\mapsto x_1+\epsilon g$  in $f$   with $g$ bosonic and then set\begin{equation}\label{der}
f(x_1,\cdots,x_i+\epsilon g,\cdots)=f(x_1,\cdots,x_i,\cdots)+\epsilon g\pd{}{x_i}f(x_1,\cdots,x_i,\cdots)\,.
\end{equation} The derivative  $\pd{}{x_i}$ decreases the degree by 1 and the charge by $i$.

In other words it has  degree -1 and    charge  $-i$.  If $g$ has degree $2h$  and charge  $i$ one has that the operator $g\pd{}{x_i}$ has degree $2h-1$  and charge 0.\smallskip

These operators thus preserve the charge and exchange between bosonic and fermionic elements.\medskip

Multiplication by $x_j,\ j\neq i$  anticommutes with $\pd{}{x_i}$ while 
$$\pd{}{x_i}x_if(x)=f(x)-x_i \pd{}{x_i} f(x),\ [\pd{}{x_i},\pd{}{x_j}]_s=0,\ \forall i\neq j $$ In other words $[\pd{}{x_i},x_i]_s=1$.

Furthermore 
$$\pd{}{x_i}(a\cdot b)=\pd{}{x_i}(a)\cdot b+(-1)^{d(a)}a\cdot \pd{}{x_i}(b);\quad \pd{}{x_i}x_i^{2k}=0,\  \pd{}{x_i}x_i^{2k+1}= x_i^{2k} $$ \medskip

One can the consider a general operator  product of the operators multiplication by elements of the free algebra   and derivatives in particylar we have  the operators of charge  0  which can be either bosonic or fermionic.  Example of fermionic operators of charge 0.
$$x_1^2\pd{}{x_2},\quad  x_1x_2\pd{}{x_3},\quad  x_2\pd{^2}{x_1},\quad  x_4\pd{^2}{x_2}, \quad x_1x_3\pd{}{x_4},\quad   x_2^2\pd{}{x_4},\quad   \quad x_1x_4\pd{}{x_5},$$

$$x_1^3x_2\pd{}{x_5},\quad  x_1x_2^2\pd{}{x_5},\quad  x_1^3x_2\pd{ }{x_1}\pd{ }{x_4},\quad  x_1^3x_2\pd{ }{x_2}\pd{ }{x_3}, \quad x_2x_3\pd{}{x_5},\quad   x_2^2\pd{}{x_5},\quad  $$

\begin{table}\label{tab}

  \begin{tabular}{ |p{1cm}||p{5cm}|p{6cm}|p{2cm}|  }
 \hline
 \multicolumn{4}{|c|}{List of  elements in $\mathcal T_\ell $} \\
 \hline
 Value of $\ell$& bosonic &fermionic&degrees \\
 \hline\\
$\ell =3 \dim=1$ & $ t(x_1x_2) $  &$t(x_1^3)=x_1^2\pd{}{x_2}(t(x_1x_2))$&   2,\ 3\\
$\ell =4$$\dim=1 $&   $t(x_1x_3)$  & $t(x_1^2x_2)=x_1x_2\pd{}{x_3}(t(x_1x_3))$ &2,\ 3\\
 $\ell =5$ $\dim= 3$& $t(x_1x_4)$,$t(x_1^3x_2)$,   $t(x_2x_3)$    & $t(x_1^2x_3)=x_1x_3\pd{}{x_4}(t(x_1x_4))$, $t(x_1 x_2^2)= x_1x_2\pd{}{x_3}(t(x_2x_3))$,   $t(x_1^5)=x_1^2\pd{}{x_2}(t(x_1^3x_2))$
  &2,4,2;\ 3,3,5\\\\
  $\ell =6$  $\dim=6$ &$t(x_1x_5)$,    $t(x_1 x_2)^2 $,  $t(x_1^2x_2^2)$,  $t((x_1 x_2)^2)$,     $t(x_1^3x_3)$,   $t(x_2x_4)$  & $t(x_1^2x_4)=x_1x_4\pd{}{x_5}(t(x_1x_5))$,  $2t(x_1^3) t(x_1x_2)=x_1^2\pd{}{x_2} t(x_1 x_2)^2$  $t(x_1^4x_2)= x_1x_2\pd{}{x_3}(t(x_1^3x_3))$, $t(x_1x_2x_3)=x_3x_1\pd{}{x_4}(t(x_2x_4))=\frac14 \pd{ }{x_2}\circ  x_3\pd{ }{x_1}t((x_1 x_2)^2)$, $t(x_1x_3x_2)=x_1x_3\pd{}{x_4}(t(x_2x_4))$,  $2t(x_2^3)=\pd{ }{x_1}\circ  x_2\pd{ }{x_1}t(x_1^2x_2^2)$,   & 2,4,4,4,4,2;\ 3,5,3,3,3,5\\\\

  $\ell =7$  $\dim=11$ &$t(x_1x_6)$,   $t(x_2 x_5) $,  $t(x_3x_4)$,    $t(x_1^3x_4)$, $t(x_1 x_2)t(x_1 x_3) $, $t(x_1^2x_3x_2)$, $t(x_1^2x_2x_3) $,  $t(x_1 x_2x_1x_3)$,  $t(x_1 x_2^3) $,              $t(x_1^5x_2)$,   
  
   & $t(x_1^2x_5)$,   $t(x_1x_2x_4)$,  $t(x_1x_4x_2)$,  $t(x_1x_3^2)$,  $t( x_2^2x_3)$, $t(x_1^4x_3)$, $t(x_1^3x_2^2)$, $t(x_1^3) t(x_1x_3)$,   $t(x_1^2x_2x_1x_2)$, $t(x_1^2x_2)t(x_1x_2)$, $t(x_1^7)$  & $2^3,4^6,6^2$;\ $3^5, 5^5,7$\\
  
 \hline
\end{tabular}
\end{table}

\eject

 \section{Super \ch \ algebras}  The theory follows verbatim the classical thory but we repeat it for convenience of the reader.\begin{definition}\label{chal}
A super $n$-- \ch\ algebra $R$ is a trace superalgebra $R$ satisfying $t(1)=n$  and all the  identities $CH_{e,f}(x,y) $ of Formula \eqref{leeqb}.
\end{definition}

By   Lemma \ref{labi}  and Formulas \eqref{leeqb1} its trace algebra satisfies the identities $T_{e,f}(x,y) $ of Formula \eqref{leeqb}.\medskip

The category of       super $n$-- \ch\ algebras  admits free algebras and an interpretation of the SFT is
\begin{theorem}\label{SFT}
 The superalgebra $\mathcal E_n\langle X,Y\rangle$ of equivariant polynomial maps from $I$--tuples of $n\times n$ bosonic and $J$--tuples of $n\times n$ fermionic matrices to  $n\times n$ matrices,  is the free algebra in the  category of  $n$--\ch--superalgebras.
 
It is the quotient of the free algebra $\mathcal F_T\langle X,Y\rangle$ modulo the $T$--ideal generated by the $n^{th}$ Cayley Hamilton polynomials of Formula  \eqref{cah} and $t(1)=n$.
  \begin{equation}\label{ffs}
{\boxed{ \mathcal  E_{ n}\langle X,Y\rangle:=\mathcal  F_T\langle X,Y\rangle/\langle CH_{e,f}(x,y),\ e+f=n+1;\ t(1)=n  \rangle}}.
\end{equation}

\end{theorem}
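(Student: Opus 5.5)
The plan is to assemble the result from three earlier ingredients: the FFT (Theorem \ref{FFT}), the generic-matrix description of relatively free algebras (Theorem \ref{gamm}), and the SFT (Theorem \ref{trnm}, part ii). Let $\mathcal I_n\subset\mathcal F_T\langle X,Y\rangle$ denote the $T$--ideal generated by the $CH_{e,f}$, $e+f=n+1$, and by $t(1)-n$. We will show that the quotient $\mathcal F_T\langle X,Y\rangle/\mathcal I_n$ is the free object of the category of super $n$--\ch\ algebras and, at the same time, is isomorphic to $\mathcal E_n\langle X,Y\rangle$.

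\textbf{Step 1: the quotient is free.} By Definition \ref{chal}, the category of super $n$--\ch\ algebras is exactly the variety $\mathcal V_S$ attached to $S=\{CH_{e,f},\ t(1)-n\}$. By the general remarks on varieties, $F_S=\mathcal F_T/I_S$ is free in $\mathcal V_S$, with $I_S=\mathcal I_n$. Concretely, any map from $X\cup Y$ to a super $n$--\ch\ algebra $R$ that respects degrees extends, by Proposition \ref{Fra}, to a trace-preserving homomorphism $\mathcal F_T\to R$. This homomorphism kills $\mathcal I_n$, because the kernel of an evaluation is a $T$--ideal (stable under substitutions and trace) and contains the generators, so it factors through $F_S$. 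Uniqueness is immediate, since the classes of $X,Y$ generate $F_S$ as a trace superalgebra.

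\textbf{Step 2: identify $F_S$ with $\mathcal E_n$.} Take the evaluation $ev:\mathcal F_T\to M_n(S_{e,f,n})$, $x_i\mapsto\xi_i$, $y_j\mapsto\eta_j$. By Theorem \ref{FFT} its image is precisely $\mathcal E_n\langle X,Y\rangle$. By Theorem \ref{gamm}, applied with $R=\mathbb K(V)$ a free supercommutative algebra (Theorem \ref{fc}), its kernel is the full $T$--ideal of trace identities of $M_n$ over supercommutative algebras. Theorem \ref{trnm} ii) says this kernel equals $\mathcal I_n$. Hence $ev$ induces an isomorphism $\mathcal F_T/\mathcal I_n\cong\mathcal E_n\langle X,Y\rangle$, which is formula \eqref{ffs}.

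\textbf{Step 3: membership in the category.} We must also check that $\mathcal E_n$ is itself a super $n$--\ch\ algebra. This follows from $t(1)=tr(1_n)=n$ and from Theorem \ref{main1}. It is consistent with Step 2, which in fact already forces it.

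\textbf{Main obstacle.} The delicate point is the kernel identification in Step 2. One has to make sure that the $T$--ideal of identities of $M_n(\mathbb K(V))$ really coincides with the identities of $n\times n$ matrices over all supercommutative algebras, so that ``free in the category'' is measured against the correct variety. This is handled by the diagram \eqref{fatt}: any evaluation into $M_n(A)$ with $A$ supercommutative factors through generic matrices over $\mathbb K(V)$, by the universal property of $\mathbb K(V)$ in Theorem \ref{fc}.

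With Theorem \ref{trnm} assumed, the rest is formal.
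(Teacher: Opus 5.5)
Your argument is correct and follows the paper's intended route: the paper gives no separate proof and presents the theorem as a reinterpretation of Theorem \ref{trnm} ii), combined with the FFT (Theorem \ref{FFT}), Theorem \ref{gamm}, and the general remark that $\mathcal F_T/I_S$ is free in the variety $\mathcal V_S$. One small fix in Step 1: the kernel of a single evaluation is not closed under substitutions, so the correct justification is that the set of all trace identities of $R$ (the intersection of all evaluation kernels) is a $T$--ideal containing the $CH_{e,f}$ and $t(1)-n$, hence contains $\mathcal I_n$.
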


\subsection{$n$--dimensional superrepresentations\label{fdr}} For a given $n\in\mathbb N$ and a supercommutative ring $A$  by $M_n(A)$ we denote the ring of $n\times n$ matrices with coefficients in $A$. By a symbol $(a_{i,j})$  we denote a matrix with entries $ a_{i,j}\in A,$ for $                   i,j=1,\ldots,n    $.  We may distinguish matrices by the $\Z/(2)$ degree.\smallskip

The construction $A\mapsto M_n(A)$ is a functor from the category $\mathcal C$ of commutative superalgebras to the category $\mathcal R$ of associative superalgebras. 

To a map $f:A\to B$ is associated a  map $M_n(f):M_n(A)\to M_n(B)$  in the obvious way $M_n(f)((a_{i,j})):= (f(a_{i,j}))$.
\begin{definition}\label{ndi}
By an  {\em $n$--dimensional superrepresentation}  of a superalgebra $R$ we mean a homomorphism $f:R\to M_n(A)$ of superalgebras with $A$ supercommutative.
\end{definition}
It is easily seen that the set valued functor $A\mapsto \hom_{\mathcal R}(R, M_n(A))$ is representable. That is:
\begin{proposition}\label{laTR}
There is a commutative superalgebra $T_n(R)$  and a natural isomorphism $\mathrm j_A $ of functors:
$$ \hom_{\mathcal R}(R, M_n(A))\stackrel{\mathrm j_A }\simeq  \hom_{\mathcal C}(T_n(R),  A  ),\quad \mathrm j_A :f\mapsto \bar f$$ given by the commutative diagram $f=M_n(\bar f)\circ \mathtt j_R$:
\begin{equation}\label{unpp}
\xymatrix{  R\ar@{->}[r]^{\mathtt j_R\qquad}\ar@{->}[rd]_f&M_n(T_n(R))\ar@{->}[d]^{M_n(\bar f)}\\
&M_n(A) } .
\end{equation} 
\end{proposition}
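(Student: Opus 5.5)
The plan is to construct $T_n(R)$ explicitly by generators and relations, as in the classical (non super) case. First I will take the free supercommutative superalgebra on generic entries. For each homogeneous element $r\in R$ of degree $d(r)$ I introduce $n^2$ symbols $r_{i,j}$, $i,j=1,\ldots,n$, of the same degree $d(r)$. I let $W$ be the graded vector space they span and form $\mathbb K(W)$ as in Formula \eqref{sus}. By Theorem \ref{fc}, $\mathbb K(W)$ is free in the category $\mathcal C$. I then define $T_n(R):=\mathbb K(W)/I$. Here $I$ is the (necessarily two-sided and graded) ideal generated by the entries of the matrices
$$(r+s)_{i,j}-r_{i,j}-s_{i,j},\quad (\lambda r)_{i,j}-\lambda r_{i,j},\quad (rs)_{i,j}-\sum_k r_{i,k}s_{k,j},\quad 1_{i,j}-\delta_{i,j}.$$
All of these are homogeneous, since the entries of a matrix of degree $d$ are required to have degree $d$ and $d(rs)=d(r)+d(s)$.

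Next I define $\mathtt j_R:R\to M_n(T_n(R))$ by $r\mapsto (\bar r_{i,j})$. The relations say exactly that this is an algebra homomorphism. It is graded because $M_n(A)=M_n(\C)\widehat\otimes A$ with $M_n(\C)$ in degree $0$, so a matrix whose entries all have degree $d$ is homogeneous of degree $d$. Matrix multiplication in $M_n(A)$ involves no signs, again because $M_n(\C)$ sits in degree $0$.

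For the universal property, I start from a graded homomorphism $f:R\to M_n(A)$ and write $f(r)=(a^r_{i,j})$. Since $f$ preserves degree, each $a^r_{i,j}$ lies in $A_{d(r)}$, so $r_{i,j}\mapsto a^r_{i,j}$ is a graded linear map $W\to A$. By Theorem \ref{fc} it extends uniquely to a homomorphism $\mathbb K(W)\to A$. This homomorphism kills $I$ because $f$ is additive, multiplicative and unital, so it factors through a map $\bar f:T_n(R)\to A$. By construction $M_n(\bar f)\circ\mathtt j_R=f$, which is diagram \eqref{unpp}. $\bar f$ is unique because $T_n(R)$ is generated by the $\bar r_{i,j}$, whose images are forced by this identity.

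Conversely, any $g\in\hom_{\mathcal C}(T_n(R),A)$ gives $M_n(g)\circ\mathtt j_R$, and the two constructions are mutually inverse. Naturality in $A$ follows from $M_n(h\circ g)=M_n(h)\circ M_n(g)$.

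The only point needing care is the super bookkeeping. I must check that the ideal generated by homogeneous elements is graded, and that $\mathbb K(W)/I$ is still supercommutative, which holds since quotients of supercommutative algebras by graded ideals are supercommutative. I must also make sure that nowhere does a sign from the graded tensor product intervene in the matrix product. Both checks are immediate from $M_n(\C)$ being purely even, so I expect no real obstacle. Optionally, to keep $W$ smaller, one may use generators of $R$ instead of all of $R$, but the above version avoids having to present $R$.
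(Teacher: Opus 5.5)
Your proposal is correct and is essentially the construction the paper has in mind. The paper only says representability is ``easily seen''; in the proof of Theorem~\ref{str} it presents $R$ as a quotient of a free algebra, sends the generators to generic matrices over a free supercommutative algebra $\mathbb K$, and sets $T_n(R)=\mathbb K/\bar J$, where $M_n(\bar J)$ is the ideal generated by the relations. Your argument does the same thing, using the tautological presentation by all homogeneous elements of $R$ instead of a chosen one.

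The only bookkeeping nit is that $0$ is homogeneous in both degrees, so your claim that ``all relations are homogeneous'' needs a word. This disappears if you take $W=R\otimes\C^{n^2}$, so that the linearity relations are built in.
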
 The map $\mathrm j_R:R\to M_n(T_n(R))$ is called the {\em universal $n$--dimensional superrepresentation of $R$ } or {  \em the universal map into $n\times n$ matrices}.

Of course it is possible that $R$ has no   $n$--dimensional superrepresentations, in which case $T_n(R)=\{0\}$. \medskip

\begin{remark}\label{rinal} A completely tautological statement is that $R$ can be embedded  in the ring $M_n(A)$ of $n\times n$ matrices over a supercommutative ring $A$ if and only if  $j_R$ is an embedding.
 
  \end{remark}
The projective linear group $\mathtt P_n=GL(n,\C)/\C^*$  acts by conjugation  of superrepresentations and as in the classical case  it acts on the universal object $M_n(T_n(R))$  and   $R$ maps to the $\mathtt P_n$ invariants.

\begin{theorem}\label{str} If $R$ is an $n$--\ch\ algebra, then the universal map
$\mathtt j_R:R\to M_n(T_n(R))^{\mathtt P_n}$ is an isomorphism inducing an isomorphism of the trace superalgebra of $R$  with $T_n(R) ^{\mathtt P_n}$.
\end{theorem}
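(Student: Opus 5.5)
We follow the classical strategy of \cite{P5}: first treat the free $n$--\ch\ superalgebra, then reduce the general case to it by presentations. For $R=\mathcal E_n\langle X,Y\rangle$, Theorem \ref{SFT} identifies $R$ with the algebra of equivariant maps, and Theorem \ref{FFT} together with Theorem \ref{gamm} says that $R$ is the trace subalgebra of $M_n(\mathbb K(V^*))$ generated by the generic matrices $\eta_i,\xi_j$, and that it equals $M_n(\mathbb K(V^*))^{G}$. Next we check that $T_n$ of the \emph{underlying} superalgebra of $\mathcal F\langle X,Y\rangle$ is $\mathbb K(V^*)$: a superrepresentation $\mathcal F\langle X,Y\rangle\to M_n(A)$ is the choice of bosonic matrices $\eta_i\mapsto(b^{(i)}_{h,k})$ with even entries and fermionic matrices with odd entries, which by Theorem \ref{fc} is exactly a homomorphism $\mathbb K(V^*)\to A$. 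For the trace version, one must interpret $T_n(R)$ for an algebra with trace as classifying trace--preserving superrepresentations. Then the condition $t(1)=n$ and $CH_{e,f}$ impose nothing more, because the matrix trace already satisfies them by Theorem \ref{main1}, and trace compatibility forces $t(M)\mapsto tr(M)$. Hence $T_n(\mathcal E_n)=\mathbb K(V^*)$ with $\mathtt j$ the generic evaluation, and the theorem for free algebras is exactly the FFT (Theorem \ref{FFT}). Note that $\mathtt P_n$ and $GL(n)$ invariants agree, since scalars act trivially.

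For general $R$, write $R=\mathcal E_n/I$ with $I$ a superideal stable under trace; this is possible because $\mathcal E_n$ is free in the category of $n$--\ch\ superalgebras. The universal property gives $T_n(R)=\mathbb K(V^*)/J$, where $J$ is the ideal generated by all the entries of the matrices in $I$, viewed inside $M_n(\mathbb K(V^*))$; that is, $M_n(J)=M_n(\mathbb K(V^*))\,I\,M_n(\mathbb K(V^*))$. We must then show:
\begin{enumerate}
\item $M_n(J)^G=I$ (injectivity);
\item $M_n(\mathbb K(V^*)/J)^G=M_n(\mathbb K(V^*))^G/M_n(J)^G$ (surjectivity).
\end{enumerate}
Statement (2) follows from linear reductivity of $G$ in characteristic $0$, through the Reynolds operator applied degreewise on the rational $G$--modules involved. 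For the trace algebra we use the identification $T_n(R)^G=t(\ldots)$, which is obtained by applying $tr$ to scalar matrices.

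The main obstacle is (1): $\bigl(M_n(\mathbb K(V^*))\,I\,M_n(\mathbb K(V^*))\bigr)^G\subset I$. The plan is to use the Reynolds operator together with polarization. An invariant element of $M_n(J)$ is a sum of terms $a\,u\,b$ with $u\in I$ and $a,b$ matrices over $\mathbb K(V^*)$. We write the entries of $a$ and $b$ via $tr(e_{h,k}\,\cdot)$ and introduce auxiliary generic matrices, as in Lemma \ref{labi}, so that the projection onto invariants becomes a multilinear equivariant expression in $u$ and the new generic variables. By the FFT this expression is a trace polynomial in which $u$ appears linearly, and after restitution it lies in the ideal generated by $u$ and $t(u\cdots)$. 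Since $I$ is stable under trace, it lies in $I$.

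The delicate points are two. First, the bookkeeping of signs when extracting the matrix entries of fermionic entries; for this we use the Grassmann trick of Remark \ref{alte}, multiplying odd entries by auxiliary $\epsilon$'s to reduce to the classical argument. Second, the final substitution of the auxiliary generic matrices back to scalar entries, which must be justified as a specialization commuting with taking $G$--invariants.
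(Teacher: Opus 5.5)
Your overall architecture matches the paper's. You identify the free $n$--\ch\ superalgebra with $M_n(\mathbb K(V^*))^G$ via the FFT and present $R$ as a quotient by a trace-stable superideal $I$. You get surjectivity from linear reductivity and reduce to showing $(BIB)^G\subset I$ with $B=M_n(\mathbb K(V^*))$. Your remark that $T_n$ must classify trace-preserving representations is also right, and it is implicit in the paper. However, the step you yourself call the main obstacle is not actually proved, and the mechanism you sketch for it would not work.

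The gap is the sentence ``By the FFT this expression is a trace polynomial in which $u$ appears linearly.'' The FFT expresses invariants and equivariants as trace polynomials in the \emph{generic matrices}. But $u$ is a specific element of $\mathcal E_n$, not a variable. Applying the FFT to an invariant built from $a_h,u,b_h$ and auxiliary matrices therefore only returns $c\in\mathcal E_n$, which you already know, and gives no reason for $u$ to appear as a factor.

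The missing idea is that $u\in I$ is itself $G$--invariant, so the Reynolds operator is linear over it: $\mathfrak R(ru)=\mathfrak R(r)u$. The paper uses this as follows. Take one auxiliary generic matrix $x$ and use cyclicity of the trace to isolate $u$:
$$tr(cx)=\sum_h\pm tr(b_hxa_h\,u)=\sum_h\pm tr\bigl(\mathfrak R(b_hxa_h)\,u\bigr).$$
Only then apply the FFT, to the equivariant map $\sum_h\pm\mathfrak R(b_hxa_h)$. This map does not involve $u$ and is linear in $x$, so it equals $\sum A_hxB_h+\sum C_k\,tr(d_kx)$ with $A_h,B_h,C_k,d_k\in\mathcal E_n$. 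Nondegeneracy of the trace pairing in $x$ then gives $c=\sum B_huA_h+\sum tr(uC_k)d_k\in I$.

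An equivalent route: replace $u$ by a generic matrix $z$ of the same parity, project $\sum_h a_hzb_h$ to invariants, apply the FFT, and substitute $z\mapsto u$. This substitution commutes with $\mathfrak R$ precisely because $u$ is invariant.

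Your alternative plan does not work. You propose expanding $a,b$ through $tr(e_{h,k}\,\cdot)$, replacing matrix units by generic matrices, and finally specializing these back to scalar matrices ``commuting with taking $G$--invariants''. That cannot be justified: the constant matrices $e_{h,k}$ are not $G$--fixed, so this specialization is not equivariant and does not commute with $\mathfrak R$. In the correct argument, the only specialization of $x$ happens in the final identity $tr(cx)=tr(c'x)$, after all invariants have been taken. Polarization and restitution play no role, and no Grassmann sign trick is needed beyond the $\pm$ from cyclicity.
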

\begin{proof}

 For the construction of the universal map we present the superalgebra $R$ as a quotient of the free $n$--\ch \ algebra.
 
 We have thus   $R= F_{T,n}\langle X,Y\rangle/I$ and a commutative diagram:

\begin{equation}\label{unpp0}
\xymatrix{ F_{T,n}\langle X,Y\rangle\ar@{->}[r]^{j}\ar@{->}[d]_{\pi}&M_n(\mathbb K[t(M)])\ar@{->}[d]^{\bar\pi}\\
R\ar@{->}[r]^{\bar j} &M_n(\mathbb K[t(M))/\bar J) } .
\end{equation} where $J=M_n( \bar J)$ is the two sided ideal of  $M_n(\mathbb K[t(M)])$  generated by $I$, then  $T_n(R)=\mathbb K[t(M))/\bar J)$.\medskip

So  
 the question is to prove that   $F_{T,n}\langle X,Y\rangle\cap  M_n( \bar J)= M_n( \bar J)^{\mathtt P_n}=I . $  \smallskip

  In order to simplify notations let us denote $$B:= M_n(\mathbb K[t(M)])),\ J:=M_n( \bar J)$$ we may also assume that the set of variables in the free algebra is infinite.    \smallskip

First $J=\sum_j Bu_j  B ,\ u_j\in I$.   Each $Bu_j  B$  is a  $\mathtt P_n$ submodule and since the group is linerarly reductive  we have $J^{\mathtt P_n}=\sum_j (Bu_j  B)^{ \mathtt P_n} ,\ u_j\in I$.  

So it is enough to prove that for every $u\in I$ of some degree  we have $(Bu  B)^{ \mathtt P_n}\subset I$.\smallskip

Take an element  $c=\sum_ha_h ub_h\in (Bu  B)^{ \mathtt P_n}$ with $a_h,b_h$ homogeneous.

As in  \cite{P5} we then use a matrix variable $x$ with entries not appearing  in $ a_h,b_h,u $  and consider the element 
   $$tr(cx)=tr(\sum_ha_h ub_hx )=tr(\sum_h\pm b_hxa_h u )\in \mathbb K[t(M)]^{ \mathtt P_n}.$$  \smallskip
   
   Since $ \mathtt P_n$ is linearly reductive    we have the Reynolds operator $\mathfrak R$  which projects to the invariants, commutes with trace and satisfies the identity $\mathfrak R(r u)=\mathfrak R(r )u,\ \forall r\in B,\ u\in B^{ \mathtt P_n}$ so \begin{equation}\label{reo}
tr(cx)=    tr(\sum_h\mathfrak R(a_h ub_h x))=  tr(\sum_h\pm\mathfrak R(b_hxa_h)u  ).
\end{equation}
   From this  one can easily prove the main Theorem. \smallskip

    In fact  now  we have the invariant $  \sum_h\pm\mathfrak R(b_hxa_h)$ which is linear in $x$  so by the FFT \ref{FFT} it is of the form $\sum_h A_hxB_h +\sum_k C_k tr(d_kx)$, with $A_h, B_h, C_k, d_k$ invariants independent of $x$.
   
   $$tr(cx)=  tr( \sum_h A_hxB_hu+\sum_k C_k tr(d_kx)u)  = $$
   $$= tr( \sum_h B_huA_hx)+tr(\sum_k u C_k tr(d_kx) ) $$
   $$= tr( \sum_h B_huA_hx+\sum_k  tr(u C_k)d_kx  ) $$\begin{equation}\label{cala}
\implies c=\sum_h B_huA_h +\sum_k  tr(u C_k)d_k\in I
\end{equation} since $I$ is an ideal closed under trace.\medskip

\end{proof}    
\end{document}